\newtheorem{thm}{Theorem}[section]
\newtheorem{prop}{Proposition}[section]
\newtheorem{lem}[prop]{Lemma}
\theoremstyle{definition}
\newtheorem{remark}[prop]{Remark}
\newtheorem{definition}[prop]{Definition}
\DeclareMathOperator{\Wr}{\operatorname{Wr}}
\newcommand{\lp}{\left(}
\newcommand{\rp}{\right)}
\newcommand{\lb}{\left[}
\newcommand{\rb}{\right]}
\newcommand{\al}{\alpha}
\newcommand{\hT}{\hat{T}}
\newcommand{\hP}{\hat{P}}
\newcommand{\hA}{B}
\newcommand{\hb}{\hat{b}}
\newcommand{\hq}{\hat{q}}
\newcommand{\hr}{\hat{r}}
\newcommand{\hw}{\hat{w}}
\newcommand{\htau}{\hat{\tau}}
\newcommand{\hrho}{\hat{\rho}}
\newcommand{\supth}{^{\text{th}}}
\newcommand{\hlambda}{\hat{\lambda}}
\newcommand{\hpi}{\hat{\pi}}
\newcommand{\tpi}{\tilde{\pi}}
\newcommand{\tW}{\tilde{W}}
\newcommand{\tnu}{\tilde{\nu}}
\newcommand{\psiperp}{\psi^{\perp}}
\newcommand{\phiperp}{\phi^{\perp}}
\newcommand{\ttau}{\tilde{\tau}}
\newcommand{\trho}{\tilde{\rho}}
\newcommand{\tA}{\tilde{A}}
\newcommand{\tT}{\tilde{T}}
\newcommand{\tq}{\tilde{q}}
\newcommand{\tr}{\tilde{r}}
\newcommand{\tw}{\tilde{w}}
\newcommand{\tpsi}{\tilde{\psi}}
\newcommand{\N}{\mathbb{N}}
\newcommand{\Nz}{\N_0}
\newcommand{\R}{\mathbb{R}}
\newcommand{\bt}{\boldsymbol{t}}
\newcommand{\bm}{{\boldsymbol{m}}}
\newcommand{\bQ}{{\boldsymbol{Q}}}
\newcommand{\cR}{{\mathcal{R}}}
\newcommand{\cT}{\mathcal{T}}
\newcommand{\Cal}{C^{(\alpha)}}
\newcommand{\taual}{\tau^{(\alpha)}}
\newcommand{\cRal}{\cR^{(\alpha)}}
\newcommand{\Pal}{P^{(\al)}}
\newcommand{\Wal}{W^{(\al)}}
\newcommand{\nual}{\nu^{(\al)}}
\newcommand{\rhoal}{\rho^{(\al)}}
\newcommand{\Ral}{R^{(\alpha)}}
\newcommand{\Tal}{T^{(\alpha)}}
\newcommand{\bQal}[1]{\bQ^{(\alpha)}_{#1}}
\newcommand{\Bal}{B^{(\alpha)}}
\newcommand{\rL}{\mathrm{L}}
\newcommand{\tor}[1]{{\color{blue} #1}}
\title{Exceptional Gegenbauer polynomials via isospectral deformation}
\author{Mar\'ia~\'Angeles Garc\'ia-Ferrero}
\address{BCAM - Basque Center for Applied Mathematics, Alameda de Mazarredo 14, 48009 Bilbao, Spain}
\email{mgarcia@bcamath.org}
\author{David G\'omez-Ullate}
\address{Departamento de Ingenier\'ia Inform\'atica, Escuela Superior de Ingenier\'ia, Universidad de C\'adiz, 11519 Puerto Real, Spain.}
\address{Departamento de F\'isica Te\'orica, Universidad Complutense de Madrid, 28040 Madrid, Spain.}
\email{david.gomezullate@uca.es}
\author{Robert Milson}
\address{Department of Mathematics and Statistics, Dalhousie University, Halifax, NS, B3H 3J5, Canada.}
\email{rmilson@dal.ca}
\author{James Munday}
\address{Department of Mathematics and Statistics, Dalhousie University, Halifax, NS, B3H 3J5, Canada.}
\email{james.munday@dal.ca}
\date{Updated September 29, 2021}
\begin{document}

\begin{abstract}
We show a method to construct isospectral deformations of classical orthogonal polynomials.
The construction is based on confluent Darboux transformations, and it allows to construct Sturm-Liouville problems with polynomial eigenfunctions that have an arbitrary number of continuous parameters. We propose to call these new orthogonal polynomial systems \emph{exceptional polynomials of the second kind}.
We illustrate this construction by describing the class of exceptional Gegenbauer polynomials of the second kind.
\end{abstract}

\maketitle
\setcounter{tocdepth}{1}
\tableofcontents

\section{Introduction}

Classical orthogonal polynomials (OPs) have been traditionally characterized as the only orthogonal polynomial bases of an $\rm L^2$ space that are also eigenfunctions of a Sturm-Liouville problem. Since the mid-nineteenth century until today, classical  OPs appear in ubiquitous applications in mathematical  physics,  numerical analysis, approximation theory or statistics, among other fields.

As it is well known, classical OPs can be classified into three main families, depending on whether they are defined on the whole real line (Hermite), the half-line (Laguerre) or a compact interval (Jacobi). These polynomial families are characterized by two free real parameters in the case of Jacobi, one for Laguerre and none for Hermite. 

This article tackles the following question: \emph{is it possible to construct an orthogonal basis of an $\rm L^2$ space, which is also formed by polynomial eigenfunctions of a Sturm-Liouville problem, but contains a higher (possibly arbitrary) number of free real parameters?}

The flexibility of deforming classical OPs to contain so many free real parameters and yet mantain many of their defining properties would open the way to many potential applications in all of the fields where classical OPs appear naturally.

In this paper we show how to construct Sturm-Liouville problems with polynomial eigenfunctions that contain an arbitrary number of real parameters, thus providing a positive answer to the previous question.
More specifically, let $T$ be a \emph{classical differential operator}, i.e. a second-order differential operator whose eigenfunctions are classical OPs, 
\[ T(z,D_z):=p(z)D_z^2 + q(z)D_z + r(z),\]
and let $\{P_i\}_{i=0}^\infty$ be a set of polynomial eigenfunctions of $T$ 
\[ T P_i=\lambda_i P_i,\quad i=0,1,2,\dots\] We will show how to
construct a new operator $\hT$ and polynomials $\hP_i$ such that
\[ \hT(z,D_z;t_{m_1},\dots,t_{m_n}):=p(z)D_z^2 + \hq(z;
  t_{m_1},\dots,t_{m_n} )D_z + \hr(z;t_{m_1},\dots,t_{m_n})\]
where $t_{m_1},\dots,t_{m_n}$ are $n$ real parameters, $\hq$ and $\hr$
are rational functions of $z$, and
\[ \hT \hP_i=\lambda_i \hP_i,\quad i\in\{0,1,2,\dots\}.\] 

The transformed eigenvalue problem has the same spectrum
$\{ \lambda_i\}_{i=0}^\infty$ as the original one, and the leading
coefficient $p(z)$ does not change in the transformation. We speak
thus of a \emph{isospectral deformation} because we will also see that
\[ \hT(z,D_z;0,\dots,0)=T(z,D_z),\qquad
  \hP_i(z;0,\dots,0)=P_i(z).\] So much extra freedom and
flexibility comes at a cost. Although the set of new polynomials is
still $\rL^2$-complete, there is a finite number of exceptional
degrees for which no polynomial of that degree exists in the basis. In
general we will have that
\[i=\deg P_i \neq \deg \hP_i,\quad i\in\{0,1,2,\dots\}\]

The new polynomial families introduced in this paper fit thus into the definition of \emph{exceptional orthogonal polynomials} (XOPs) originally introduced in \cite{GKM09}. In the past ten years,  the mathematical study of XOPs has been
concerned with their classification \cite{GFGM19, GKM12}, the
properties of their zeroes \cite{GMM13, Ho15, KM15}, and their
recurrence relations \cite{Du15, GKKM16, MT15, Od16}. The spectral-theoretic aspects of exceptional operators (the
second-order differential operators whose eigenfunctions are the
XOPs) have been studied in \cite{LL15,LL16,GGM21}.
It is known that every exceptional operator can be related to a
classical Bochner operator by a finite number of state-adding,
state-deleting, or isospectral Darboux transformations \cite[Theorem
1.2]{GFGM19}. 

In the context of mathematical physics, XOPs appear as exact
solutions to Dirac's equation \cite{SHR14} and as bound states of
exactly-solvable rational extensions \cite{GGM14, OS09,
  PTV12}. Additionally, they are connected to
finite-gap potentials \cite{HV10} and super-integrable systems
\cite{MQ13, PTV12}.

The contribution of this paper is to present a new, different construction of exceptional orthogonal polynomials, that we propose to call (rather unimaginatively) \textit{exceptional orthogonal polynomials of the second kind}. There are a number of important  differences between the class of known XOPs (of the first kind)  and the new XOPs of the second kind, including:
\begin{enumerate}
\item XOPs of the first kind  have the same number  of real parameters as their classical counterparts, while XOPs of the second kind have as many real parameters as  desired.
\item XOPs of the first kind are not a continuous deformation of a classical polynomial family, while XOPs of the second kind become classical when we set all deformation parameters to zero.
\item XOPs of the first kind have a different spectrum with respect to their classical counterparts while XOPs of the second kind have the same spectrum.
\item The degree sequence for XOPs of the first kind is an increasing sequence, which is no longer true for XOPs of the second kind.
\end{enumerate}


As an illustration of this new construction, we show how to define an isospectral
deformation of the classical Gegenbauer operator 
\begin{equation}
    \label{eq:T0def}
    \Tal := (1-z^2) D_z^2 - (2\al+1) z D_z 
\end{equation}
through the application of a finite number of confluent Darboux
transformations (CDTs), also referred to as the ``double
commutator'' method \cite{GT96}.

Exceptional Jacobi polynomials of the first kind are indexed by discrete parameters \cite{Bo20,Du17} and, as a result, cannot be continuously deformed into their classical
counterparts. By contrast, every CDT introduces a new
deformation parameter. Therefore, by performing a chain of $n$ CDTs on
the classical Gegenbauer operator \eqref{eq:T0def} at distinct
eigenvalues, we will arrive at an exceptional operator that depends on $n$ discrete parameters and $n$ real parameters. The eigenpolynomials of the resulting exceptional
operator are exceptional Gegenbauer polynomials of the second kind, which also depend on $n$ real parameters, and can be continuously deformed to the
classical Gegenbauer polynomials by letting the parameters tend to zero.

The new construction of exceptional polynomials and weights described in this paper can also be understood from the point of view of the theory of inverse scattering
\cite{AM80, CASH17, Su85}, and it is conceptually related to the
construction of KdV multi-solitons. While KdV solitons are obtained by applying a
state-adding deformation on the zero potential, the exceptional
operators in this paper are related to isospectral deformations of
particular instances of the Darboux--P\"oschl--Teller (DBT) potential
\cite{GB13}.  In spectral-theoretic terms, the consequence of our
construction is the continuous modification of the norming constants
of a finite number of the bound states.  However, since our focus
is on orthogonal polynomials, rather than quantum mechanics or
evolution equations, our approach is formulated in the gauge and coordinates of
the classical differential operator rather than working with Schr\"odinger
operators.  The resulting procedure can be easily implemented using a
computer algebra system.

For the remainder of the paper, we develop the theory of confluent Darboux transformations in the algebraic gauge, and
apply it in a construction of new exceptional Gegenbauer polynomials of the second kind. The paper is organized as follows:
in Section ~\ref{sec:algDar} we describe the formal theory of rational multi-step Darboux-Crum transformations in the algebraic gauge. In Section~\ref{sec:CDT} we describe confluent Darboux transformations as a 2-step Darboux transformation whose seed functions are an eigenfunction and a generalized eigenfunction at the same eigenvalue.
 In Section ~\ref{sec:Gegen}, we describe the class of exceptional Gegenbauer operators and their factorizations and we provide a recursive
construction for the operators and eigenfunctions connected by CDTs. In Section~\ref{sec:xgegen} we provide matrix formulas for exceptional Gegenbauer polynomials of second type, we prove the equivalence of
the matrix and recursive definitions and thereby establish the proofs
of our main results concerning the Sturm--Liouville properties and
$\rL^2$-completeness of the exceptional Gegenbauer
polynomials. Finally, in Section ~\ref{sec:examples}, we provide explicit
examples of families of exceptional Gegenbauer polynomials of the second kind, for both one and two deformation parameters.

Although this is the first paper to provide a detailed description of the construction of exceptional polynomials of the second kind via confluent Darboux transformations, we would like to mention that exceptional polynomials obtained by confluent Darboux transformations were investigated earlier in the 1-step case (with one deformation parameter) by Grandati and Quesne \cite{GQ15}.  The family of exceptional Legendre polynomials of the second kind  was recently introduced by some of the present authors \cite{GFGM21}, showing for the first time that deformations of classical polynomials with an arbitrary number of real parameters exist. Shortly after that, Dur\'an has shown that there is an alternative way to construct exceptional polynomials of the second kind, by first perturbing the  measure of the discrete Hahn polynomials, dualizing and taking a suitable limit\cite{Du21}. The extra continuous parameters enter in this construction in the polynomial perturbation of the classical discrete measure. Summarizing, there are three rather different constructions leading to the same mathematical objects:
\begin{enumerate}
\item[(i)] one based in iterating the action of differential operators, leading to Wronskian determinants whose seed functions  are generalized eigenfunctions, 
\item[(ii)] one based in matrix  and integral formulas, coming from the inverse scattering method,
\item[(iii)] one based in Dur\'an's bispectral construction, coming from a perturbation of a classical discrete measure.
\end{enumerate}

In this paper we show the equivalence of the first two approaches. Although there is a  strong evidence that the last construction leads to the same objects, establishing the equivalence of all three methods remains an open question, which can possibly trigger  further research.

\subsection*{Notation}
\label{sec:notation}


Throughout the paper we use $\N = \{1,2,\ldots \}$ to denote the set of natural
numbers and $\Nz = \{0, 1,\dots\}$ to denote the set of non-negative
integers.  We use ``half-integer'' to refer to an odd integer divided
by $2$.  The set of positive half-integers will be denoted by
$\Nz + \frac12$.

We let $D_z$ denote the derivative with respect to $z$.  For the sake
of notational convenience, we will often drop the explicit dependence
on the indeterminate $z$ and write
$\phi=\phi(z),\; \phi' =  D_z \phi =\phi_z$, and $D=D_z$.

We call a differential expression of the form
$\sum_{k=0}^n p_k(z) D_z^k,$ where
$p_0(z),\ldots, p_n(z)$ are rational functions and $p_n \ne 0$,
an \emph{$n\supth$-order rational
  operator}.  We will call a function $\phi(z)$ quasi-rational if its
log-derivative $w(z) = \phi'(z)/\phi(z)$ is rational.  
We denote by $\Wr[y_1, \dots, y_k]$ the Wronskian determinant of the functions $y_1, \dots, y_k$.

We will denote matrices by calligraphic symbols, such as $\cR$,
whereas 1-dimensional tuples will be given bold symbols such as $\bm$,
$\bt$, or $\bQ$. To access the components of a vector or tensor we
will employ square brackets, i.e. $[\cR]_{k\ell}$ denotes the
$(k,\ell)$ entry of $\cR$.

We will denote an $n$--tuple of integers by
$\bm=(m_1, \dots, m_n)\in\Nz^n$, and associate to it the $n$--tuple of
real parameters $\bt_{\bm}=(t_{m_1}, \dots, t_{m_n})\in \R^n$. We will
separate objects of different natures, such as real parameters and
tuples, by semicolons. The concatenation of tuples will be shown using
commas, e.g. if $i_1, \dots, i_k \in \Nz$, then
$(\bm, i_1, \dots, i_k)$ denotes the $(n+k)$--tuple
$(m_1, \dots, m_n, i_1, \dots, i_k)$. We will frequently omit
parentheses when denoting $1$-tuples, opting to write $m_1$ instead of
$(m_1)$. Occasionally, we will omit the dependence on the parameter
$\al$, so as not to conflict with other superscript notations.





\section{Darboux transformations and factorization chains}
\label{sec:algDar}

The formal theory of Darboux transformations for Sch\"odinger operators in mathematical physics (also known as supersymmetric quantum mechanics) has been developed in numerous works, mostly with the aim of generating new solvable problems from known ones, \cite{cooper,CASH17,GKM04}, but also in the construction of solutions to Painlev\'e type equations, \cite{adler,GGM21b}. It was recently shown that every exceptional polynomial family must be related to a classical family by a sequence of Darboux transformations, \cite{GFGM19}. We first describe here Darboux transformations with seed functions that have no repeated eigenvalues. This is the class of transformations that leads to exceptional orthogonal polynomials of the first kind, when applied on the classical polynomial families, \cite{Bo20,Du15,Du17,GGM14,GKM12}.

In this Section we revise some of these well known results, albeit with a little twist: we describe Darboux transformations for the class of general second order differential operators, which include Schr\"odinger operators as a particular case. For the purpose of this paper, we focus on second order differential operators with rational coefficients that have an infinite number of polynomial eigenfunctions, and we restrict to rational Darboux transformations of these operators that preserve this property by construction. However, the results derived in this section can be trivially extended to general second order operators. All of this section is written at a purely formal level, so in an abuse of notation we speak of differential operators without specifying their domain, or we speak of eigenfunctions as solutions of an eigenvalue problem, without defining a proper spectral theoretic setting. 

\begin{definition}\label{def:ratDT}
  For $n \in \Nz$, let $T_0, T_1,\ldots, T_n$ be second-order rational
  operators. We say that $T_0\to T_1\to \cdots \to T_n$ is an
  \emph{$n$-step rational Darboux transformation} if there exist
  first-order rational operators $A_1, A_2, \ldots, A_n$ such that
  \begin{equation}
    \label{eq:AT=TA} 
    A_{k} T_{k-1} = T_{k} A_{k} ,\quad k=1,\ldots, n.
  \end{equation}
\end{definition}

The next Proposition states that if $A_k$ is a first order intertwiner operator between $T_{k-1}$ and $T_k$ as in \eqref{eq:AT=TA}, then its kernel must be spanned by a formal eigenfunction of $T_{k-1}$.

\begin{prop}
  \label{prop:fchaineigens}
  For $n \in \Nz$, let $T_0 \to T_1\to\dots\to T_n$ be an $n$-step rational Darboux transformation, and $A_1, \ldots, A_n$ be the first-order rational operators satisfying  the
   intertwining relation
  \eqref{eq:AT=TA}. Let $b_k(z)$ and $w_k(z)$  be rational functions such that
    \begin{equation}
        \label{eq:Abwk}
        A_k  = b_k \lp D - w_k \rp,
    \end{equation}
    and define (up to a constant factor) the quasi-rational functions
    \begin{equation}
        \label{eq:psikwk}
        \psi_k(z) := \exp \lp \int^z w_k(u) du \rp.
    \end{equation}
    We then have that
    \begin{equation}
        \label{eq:Tk-1psik}
        T_{k-1} \psi_k = \lambda_k \psi_k,\qquad k=1,\ldots, n,
    \end{equation}
    where $\lambda_1, \lambda_2, \ldots, \lambda_n$ are constants. 
\end{prop}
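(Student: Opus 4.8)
The plan is to exploit the intertwining relation \eqref{eq:AT=TA} together with the fact that $\psi_k$ spans the kernel of $A_k$. First I would record two preliminary facts. Since $A_k$ is a genuine first-order operator, its leading coefficient $b_k$ is a nonzero rational function. And from \eqref{eq:psikwk} we have $\psi_k' = w_k\psi_k$, equivalently $(D-w_k)\psi_k = 0$, so by \eqref{eq:Abwk} we get $A_k\psi_k = b_k(D-w_k)\psi_k = 0$; thus $\psi_k$ is a formal eigenfunction of $D - w_k$, as claimed in the remark preceding the Proposition.

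Next I would apply $A_k$ to the function $T_{k-1}\psi_k$. Using the intertwining relation \eqref{eq:AT=TA} and $A_k\psi_k = 0$,
\[ A_k\bigl(T_{k-1}\psi_k\bigr) = T_k\bigl(A_k\psi_k\bigr) = T_k(0) = 0 . \]
Since $b_k\ne 0$, this forces $(D - w_k)\bigl(T_{k-1}\psi_k\bigr) = 0$; that is, the quasi-rational function $T_{k-1}\psi_k$ lies in the kernel of the very same first-order operator $D - w_k$.

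Finally I would observe that this kernel is one-dimensional. If $g$ is any (formal, local) solution of $g' = w_k g$, then $(g/\psi_k)' = (g'\psi_k - g\psi_k')/\psi_k^2 = 0$, so $g = \lambda_k\,\psi_k$ for a constant $\lambda_k$. Applying this with $g = T_{k-1}\psi_k$ gives exactly \eqref{eq:Tk-1psik}. The constant of integration in \eqref{eq:psikwk} only rescales $\psi_k$ and does not affect $\lambda_k$, consistent with $\psi_k$ being defined up to a constant factor. The argument is uniform in $k = 1,\dots,n$, so no induction along the chain is needed. There is essentially no obstacle here beyond bookkeeping: because the whole discussion is formal, ``kernel'' and ``one-dimensional'' should be interpreted locally on the complement of the finitely many poles of $w_k$, and the constancy of $\lambda_k$ is the content of the elementary computation $(g/\psi_k)' = 0$ just given.
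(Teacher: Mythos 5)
Your argument is correct and coincides with the paper's own proof: both use $A_k\psi_k=0$ together with the intertwining relation to get $A_k(T_{k-1}\psi_k)=T_k(A_k\psi_k)=0$, and then conclude from the one-dimensionality of the kernel of the first-order operator $A_k$ that $T_{k-1}\psi_k=\lambda_k\psi_k$. Your explicit verification that the kernel is one-dimensional (via $(g/\psi_k)'=0$) is a detail the paper leaves implicit, but the route is the same.
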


\begin{proof}
    By \eqref{eq:Abwk} and \eqref{eq:psikwk}we have that $A_k \psi_k=0$, so  \eqref{eq:AT=TA} implies that
    $A_k T_{k-1} \psi_k = 0$.
    Since $A_k$ is a first-order operator, its kernel is
    1-dimensional and it is therefore spanned by $\psi_k$. This implies
    that there exists a constant $\lambda_k$ such that
    \eqref{eq:Tk-1psik} holds. 
\end{proof}

\begin{remark}
  Assume that $T_k$ has the form
    \begin{equation}
        \label{eq:Tpqrk}
        T_k =pD^2+ q_k D + r_k,\quad k=0,1,\ldots, n.
    \end{equation}
    Observe that its coefficients $p,q_k$ and  $r_k$  are related to the rational functions $w_1,\ldots, w_n$ defined in \eqref{eq:Abwk} by the following
    Ricatti-type equations
    \begin{equation}
        \label{eq:pqrwk}
        p (w_k'+w_k^2)+ q_{k-1}w_k + r_{k-1} = \lambda_k,\quad k=1,\ldots, n.
    \end{equation}
\end{remark}

For a given $n$-step Darboux transformation, the
corresponding $\psi_1, \ldots, \psi_n$ are unique, up to a choice of
multiplicative constant.  In light of this remark and future ones, we introduce the following defintions:

\begin{definition}
We call the quasi-rational functions
$\psi_1, \ldots, \psi_n$ \emph{factorization eigenfunctions}, the set of numbers
$\lambda_1,\ldots, \lambda_n$  \emph{factorization eigenvalues}, and the rational functions
 $b_1, \ldots , b_n$ \emph{factorization gauges}.
\end{definition}

A chain of Darboux transformations is more often defined as a factorization of each second-order operator $T_k$ for $k = 0, \ldots, n$, followed by a permutation of the two factors to yield the next operator $T_{k+1}$. We make precise the notion of a factorization chain in the next definition and establish later that both approaches (intertwining of operators and factorization) coincide.

\begin{definition}
  Let $T_0,T_1,\ldots, T_n$ be second-order rational operators.  We
  say that $T_0 \to T_1 \to \cdots\to T_n$ is a \emph{factorization chain} if
  there exist first-order rational operators $A_k, B_k,\; k=1,\ldots,
  n$ and constants $\lambda_1,\ldots, \lambda_n$ such that
    \begin{equation}
        \label{eq:fachain}
        T_{k-1} = \hA_{k} A_{k} + \lambda_{k},\quad T_{k} = A_{k} \hA_{k} + \lambda_{k},\qquad k=1,\ldots, n,
    \end{equation}
\end{definition}

The
intertwining relations given in \eqref{eq:AT=TA} then follow as a
consequence. We now show that these two formulations of Darboux
transformations are equivalent.

\begin{prop}
  \label{prop:fachainequiv}
  A multi-step rational Darboux transformation is necessarily a factorization
  chain, and viceversa.
\end{prop}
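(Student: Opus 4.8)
The plan is to prove the equivalence in two directions, establishing first that a factorization chain is a Darboux transformation, and then the converse.

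\textbf{Factorization chain $\Rightarrow$ Darboux transformation.} Suppose \eqref{eq:fachain} holds. Multiplying the first identity on the left by $A_k$ gives $A_k T_{k-1} = A_k\hA_k A_k + \lambda_k A_k = (A_k\hA_k + \lambda_k)A_k = T_k A_k$, which is precisely the intertwining relation \eqref{eq:AT=TA}. Thus every factorization chain is an $n$-step rational Darboux transformation, and this direction is immediate.

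\textbf{Darboux transformation $\Rightarrow$ factorization chain.} Here I would work one step at a time, so it suffices to treat a single intertwining relation $A_k T_{k-1} = T_k A_k$ with $A_k = b_k(D - w_k)$ as in \eqref{eq:Abwk}. By Proposition~\ref{prop:fchaineigens} there is a factorization eigenvalue $\lambda_k$ with $T_{k-1}\psi_k = \lambda_k\psi_k$ and $A_k\psi_k = 0$. The natural candidate for the second factor is $\hA_k := p\, b_k^{-1}(D - w_k - (\log(p/b_k^2))')$ — more concretely, one defines $\hA_k$ as the unique first-order rational operator (with the correct leading coefficient to make $\hA_k A_k$ have leading term $pD^2$) for which $\hA_k A_k + \lambda_k$ annihilates $\psi_k$ and has the right sub-leading coefficient; since a second-order operator is determined by its first three coefficients and $\psi_k$ being in its kernel, one checks directly that $\hA_k A_k + \lambda_k = T_{k-1}$. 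I would verify this by comparing coefficients: the leading coefficient matches by the choice of $\hA_k$, the coefficient of $D$ matches the Riccati relation \eqref{eq:pqrwk}, and the zeroth-order coefficient is then forced because both operators annihilate $\psi_k$. Once $T_{k-1} = \hA_k A_k + \lambda_k$ is established, the intertwining relation $A_k T_{k-1} = T_k A_k$ forces $A_k\hA_k A_k = (T_k - \lambda_k)A_k$, and since $A_k$ is a nonzero first-order operator it can be cancelled on the right (any rational operator $L$ with $LA_k = 0$ must vanish, as $A_k$ has a nonzero leading coefficient), giving $T_k = A_k\hA_k + \lambda_k$. This yields \eqref{eq:fachain} for each $k$, hence a factorization chain.

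\textbf{Main obstacle.} The routine but slightly delicate point is the construction and verification of the companion operator $\hA_k$: one must confirm that the rational operator built to have the prescribed leading coefficient and to annihilate $\psi_k$ actually reproduces $T_{k-1}$ exactly, i.e. that the Riccati relation \eqref{eq:pqrwk} is equivalent to the matching of the middle coefficient. The cancellation of $A_k$ on the right in the second identity also needs the observation that right-multiplication by a fixed nonzero first-order rational operator is injective on rational operators, which is where one uses that $b_k \ne 0$. Neither step is hard, but getting the gauge factor in $\hA_k$ correct is the place where errors would creep in.
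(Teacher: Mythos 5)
Your argument follows the same route as the paper's proof: the forward direction by associativity, and the converse by constructing a companion operator $\hA_k=\hb_k(D-\hw_k)$ normalized so that $\hA_kA_k$ has leading term $pD^2$, verifying $\hA_kA_k=T_{k-1}-\lambda_k$ via the eigenfunction $\psi_k$, and then cancelling $A_k$ on the right using the absence of zero divisors in the ring of differential operators. Two concrete slips should be fixed. First, your candidate formula for $\hA_k$ is wrong: with $\hb_k=p/b_k$, matching the coefficient of $D$ in $\hA_kA_k$ with $q_{k-1}$ forces $\hw_k=-w_k-\tfrac{q_{k-1}}{p}+\tfrac{b_k'}{b_k}$, whereas your expression $w_k+(\log(p/b_k^2))'$ has the wrong sign on $w_k$ and omits the $q_{k-1}/p$ term; already in the Schr\"odinger case $p=-1$, $q_{k-1}=0$, $b_k=1$ your formula yields $-(D-w_k)$ instead of the correct $-(D+w_k)$, and $\hA_kA_k$ then acquires a spurious first-order term $2w_kD$. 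Second, the defining condition ``$\hA_kA_k+\lambda_k$ annihilates $\psi_k$'' is vacuous: since $A_k\psi_k=0$, every choice of $\hA_k$ satisfies $\hA_kA_k\psi_k=0$, and the condition as written reduces to $\lambda_k\psi_k=0$. It therefore cannot determine $\hA_k$. What does work, and is presumably what you intend, is to fix $\hb_k$ and $\hw_k$ by matching the $D^2$ and $D$ coefficients of $\hA_kA_k$ against those of $T_{k-1}$, and then to observe that $\hA_kA_k$ and $T_{k-1}-\lambda_k$ are two operators with identical top two coefficients, both of which annihilate $\psi_k$; their difference is multiplication by a rational function vanishing on $\psi_k$, hence zero. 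That final step is equivalent to the paper's direct computation, since the Riccati relation \eqref{eq:pqrwk} is precisely $(T_{k-1}-\lambda_k)\psi_k=0$ divided by $\psi_k$. The right-cancellation of $A_k$ at the end is exactly as in the paper.
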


\begin{proof}
  Suppose that \eqref{eq:fachain} holds.  Then, the intertwining
  relations \eqref{eq:AT=TA} follow by the associativity of operator
  composition.
  
  Conversely, suppose that \eqref{eq:AT=TA} holds, let  $A_k$ be as in \eqref{eq:Abwk}, and set
    \begin{gather}
        \label{eq:hAkdef}
        \hA_k := \hb_k (D-\hw_k), 
    \end{gather}
    where
    \begin{gather}
        \label{eq:hwkdef}
        \hw_k := - w_k-\frac{q_{k-1}}{p} + \beta_k, \quad \beta_k := \frac{b_k'}{b_k},\quad \hb_k := \frac{p}{b_k}.
    \end{gather}
    Let $\lambda_1,\ldots, \lambda_n$ denote the eigenvalues as per
    \eqref{eq:Tk-1psik}. By \eqref{eq:pqrwk} and a direct calculation,
    we have
    \begin{equation}
        \label{eq:hAkAk}
        \begin{aligned}
            \hA_k A_k &= \hb_k( D -\hw_k) b_k (D- w_k)\\
            &= p (D-\hw_k+\beta_k) (D-w_k)\\
            &= p (D+w_k) (D-w_k)+ q_{k-1}(D-w_k) \\
            &=p D^2 + q_{k-1} D - p (w_k'+w_k^2) - q_{k-1} w_k \\
            &=T_{k-1}-\lambda_k .
        \end{aligned}
    \end{equation}
    Hence, it follows that
    \begin{gather*}
        (T_k - A_k \hA_k-\lambda_k) A_k = A_k (T_{k-1} - \hA_k A_k-\lambda_k)  = 0.
    \end{gather*}
    The ring of differential operators does not have zero
    divisors, so relations \eqref{eq:fachain} follow immediately.
\end{proof}

\begin{remark}
As a direct consequence we may
observe that Darboux transformations are invertible. Indeed, by
\eqref{eq:fachain}, we have that
\begin{gather*}
    \hA_k T_k = T_{k-1} \hA_k,\quad k=1,\ldots, n,
\end{gather*}
where $\hA_1,\ldots,\hA_n$ are the rational operators defined by
\eqref{eq:hAkdef} and \eqref{eq:hwkdef}. Thus, the chain
$T_n \to T_{n-1} \to \cdots \to T_0$ also satisfies the definition of
an $n$-step Darboux transformation.
\end{remark}

\subsection{Seed eigenfunctions and generalized Crum formula} \hfill

So far we have defined $n$-step Darboux transformations referring to the intertwining or factorization of operators $T_0,T_1,\dots,T_n$ at each step of the chain, which requires an eigenfunction $\psi_k$ for each operator $T_{k-1}$ in the chain. In this section we show how to define an $n$-step Darboux transformation using only eigenfunctions of the first operator $T_0$, that we will call \emph{seed functions}. In the case of Darboux chains for Schr\"odinger operators, this construction leads to the well known Darboux-Crum Wronskian formula, \cite{crum}. Theorem~\ref{thm:fachain} in this section can thus be seen as a generalization of Crum's formula.

 Let $T_0$ be a second-order rational operator and let
$\phi_1, \phi_2, \ldots, \phi_{n}$ be quasi-rational eigenfunctions of
$T_0$. Explicitly, we have
\begin{gather*}
    T_0\phi_k = \lambda_k \phi_k,\quad k=1,\ldots, n,
\end{gather*}
where $\lambda_1, \lambda_2, \ldots, \lambda_{n}$ are constants. We
refer to $\phi_1,\ldots, \phi_n$ as \emph{seed eigenfunctions},
because, as we show below, a Darboux transformation is determined by a
choice of seed eigenfunctions and factorization gauges.

Going forward, for a set of indices $\{ i_1, \ldots, i_k \} \subseteq \{ 1,\ldots, n \}$, we write
\begin{equation}
    \label{eq:phiwr}
    \phi_{(i_1, i_2, \ldots, i_k)}
    := \Wr[\phi_{i_1},\phi_{i_2},\ldots,    \phi_{i_k}].
\end{equation}
We now arrive at the key result of this section: explicit formulas for the coefficients of an operator obtained by a multi-step Darboux transformation.  

\begin{thm}
    \label{thm:fachain}
    Let $T_0$ be a second-order rational operator as in \eqref{eq:Tpqrk},
    $\{\phi_1, \ldots, \phi_n\}$ be quasi-rational eigenfunctions of $T_0$
    with distinct eigenvalues $\lambda_1, \ldots, \lambda_n$ and let $\{b_1, \ldots, b_n\}$ be a set of non-zero rational functions. Define the operators
    \begin{equation}
     \label{eq:Tkdef}
    T_k := p D^2 + q_k D + r_k,\qquad        A_k := b_k(D-w_k).
    \end{equation}
where
  \begin{equation}
  \label{eq:omk}
     \sigma_k:=\sum_{j=1}^k (\log b_j)',\quad \upsilon_k := 
        \frac{\phi_{(1, 2, \ldots, k)}'}{\phi_{(1, 2, \ldots, k)}},\qquad
        k=1,\ldots, n; 
  \end{equation}
and
    \begin{align}
        \begin{split}\label{eq:qkrk}
        &q_k := q_0+ k p' - 2 p \sigma_k, \\ 
        &r_k := r_0+k q_0'  + \frac12 k(k-1) p''+ \upsilon_k p'  -  \sigma_k(q_0+kp')  + (\sigma_k^2-\sigma_k' + 2\upsilon_{k}') p, 
        \end{split}\\
      \begin{split}\label{eq:wkgen}
        &w_1  :=\upsilon_1,\quad
        w_k :=
          \sigma_{k-1} +\upsilon_k - \upsilon_{k-1},\quad k=2,\dots,n.
          \end{split}
    \end{align}
Then, the sequence of operators  $T_0 \to T_1 \to \cdots \to T_n$ is an $n$-step Darboux
    transformation in the sense of Definition~\ref{def:ratDT}, i.e. \eqref{eq:AT=TA} holds with the operators $A_k$ defined above.
\end{thm}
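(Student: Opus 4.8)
The plan is to argue by induction on $n$, at each stage peeling off the first Darboux step $T_0\to T_1$ and applying the inductive hypothesis to the shortened chain $T_1\to\cdots\to T_n$. The case $n=0$ is vacuous, so the content lies in the inductive step, which I would split into two parts: (i) establish the $k=1$ intertwining $A_1T_0=T_1A_1$ with $T_1,A_1$ as in \eqref{eq:Tkdef}--\eqref{eq:wkgen}; (ii) show that running the recipe of the theorem on $T_1$, with seed functions $A_1\phi_2,\dots,A_1\phi_n$ and factorization gauges $b_2,\dots,b_n$, reproduces precisely the operators $T_2,\dots,T_n$ and intertwiners $A_2,\dots,A_n$ of the original chain. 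Granting (i) and (ii), the inductive hypothesis applied to $T_1$ gives $A_kT_{k-1}=T_kA_k$ for $k=2,\dots,n$, and together with (i) this is the assertion of the theorem.

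For part (i): since $w_1=\upsilon_1=\phi_1'/\phi_1$ and $T_0\phi_1=\lambda_1\phi_1$, dividing the eigenvalue equation by $\phi_1$ and using $\phi_1''/\phi_1=w_1'+w_1^2$ gives the Riccati relation $p(w_1'+w_1^2)+q_0w_1+r_0=\lambda_1$, i.e.\ \eqref{eq:pqrwk} for $k=1$. Let $\hA_1$ be the partner operator built from $A_1$ by \eqref{eq:hAkdef}--\eqref{eq:hwkdef}; by \eqref{eq:hAkAk} the Riccati relation gives $\hA_1A_1=T_0-\lambda_1$, so that with $T_1':=A_1\hA_1+\lambda_1$ one has $A_1T_0-T_1'A_1=A_1(T_0-\hA_1A_1-\lambda_1)=0$. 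It then remains to check that this $T_1'$ coincides with the operator $T_1=pD^2+q_1D+r_1$ of \eqref{eq:qkrk}: expanding $A_1\hA_1+\lambda_1$, the coefficient of $D$ equals $q_0+p'-2p\beta_1$, which is $q_0+p'-2p\sigma_1$ since $\sigma_1=\beta_1=b_1'/b_1$; and the zeroth-order coefficient matches $r_1$ of \eqref{eq:qkrk} for $k=1$ (where $\upsilon_1=w_1$ and the $\tfrac{1}{2}k(k-1)p''$ term drops out) by a direct if lengthy computation.

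For part (ii): set $\tilde\phi_j:=A_1\phi_{j+1}$, $j=1,\dots,n-1$. By \eqref{eq:AT=TA} these are eigenfunctions of $T_1$ with eigenvalues $\lambda_{j+1}$; they are nonzero, since $\ker A_1$ is spanned by $\phi_1$ while no $\phi_{j+1}$ is proportional to $\phi_1$ (the $\lambda_i$ being distinct), hence quasi-rational (a first-order rational operator sends a quasi-rational function outside its kernel to a quasi-rational function), and their eigenvalues are again pairwise distinct. The crucial identity is $A_1\phi_{j+1}=b_1\,\Wr[\phi_1,\phi_{j+1}]/\phi_1$; combining it with the Wronskian relations $\Wr[(D-w)h_1,\dots,(D-w)h_m]=\Wr[\phi,h_1,\dots,h_m]/\phi$ for $w=\phi'/\phi$ and $\Wr[gh_1,\dots,gh_m]=g^m\Wr[h_1,\dots,h_m]$ yields $\Wr[\tilde\phi_1,\dots,\tilde\phi_j]=b_1^{\,j}\,\phi_{(1,2,\dots,j+1)}/\phi_1$. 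Hence the data of the shortened chain relate to those of the original by $\tilde\sigma_j=\sigma_{j+1}-\sigma_1$ and $\tilde\upsilon_j=j\sigma_1+\upsilon_{j+1}-\upsilon_1$, and substituting these (together with $q_1,r_1$ in place of $q_0,r_0$) into \eqref{eq:qkrk}--\eqref{eq:wkgen} and reindexing $k=j+1$ returns $q_k,r_k,w_k$: for instance $\tilde q_j=q_1+jp'-2p\tilde\sigma_j=q_0+(j+1)p'-2p\sigma_{j+1}=q_{j+1}$, $\tilde\upsilon_1=\sigma_1+\upsilon_2-\upsilon_1=w_2$, and $\tilde\sigma_{j-1}+\tilde\upsilon_j-\tilde\upsilon_{j-1}=\sigma_j+\upsilon_{j+1}-\upsilon_j=w_{j+1}$, so the operators and intertwiners of the shortened chain are exactly $T_2,\dots,T_n$ and $A_2,\dots,A_n$.

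I expect the only genuine obstacles to be bookkeeping ones. The Wronskian identity used in (ii) --- equivalently, the statement that $D-\phi'/\phi$ transforms the Crum data as claimed --- should be isolated as a preliminary lemma or cited. And verifying that the expression for $r_k$ in \eqref{eq:qkrk}, which is quadratic in $k$, telescopes to $r_{j+1}$ under the substitutions $\tilde\sigma_j=\sigma_{j+1}-\sigma_1$, $\tilde\upsilon_j=j\sigma_1+\upsilon_{j+1}-\upsilon_1$ and their derivatives is where essentially all the algebra sits, as is the corresponding check for $r_1$ in part (i); once the factorization $\hA_1A_1=T_0-\lambda_1$ is in hand, the intertwining relations themselves are automatic.
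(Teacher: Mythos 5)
Your proposal is correct, and all the substitutions you leave as ``lengthy algebra'' do check out: the Riccati relation $p(w_1'+w_1^2)+q_0w_1+r_0=\lambda_1$ gives $\hA_1A_1=T_0-\lambda_1$ exactly as in \eqref{eq:hAkAk}, the zeroth-order coefficient of $A_1\hA_1+\lambda_1$ does reduce to $r_1$ of \eqref{eq:qkrk} with $k=1$, and the quadratic-in-$k$ expression for $r_k$ does telescope to $r_{j+1}$ under $\tilde\sigma_j=\sigma_{j+1}-\sigma_1$, $\tilde\upsilon_j=j\sigma_1+\upsilon_{j+1}-\upsilon_1$. However, your route is genuinely different from the paper's. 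The paper does not induct on the chain length; instead it first reduces to the trivial gauge $b_1=\cdots=b_n=1$ (Lemma~\ref{lem:gaugexform} shows $T_k=s_k\tT_ks_k^{-1}$ with $s_k=b_1\cdots b_k$), then in that gauge compares the coefficients of $\tA_k\tT_{k-1}$ and $\tT_k\tA_k$ directly, reducing the intertwining to the single Riccati-type identity $\tT_{k-1}\tpsi_k=\lambda_k\tpsi_k$, which is where the induction and the Wronskian identity of Lemma~\ref{lem:phiij} enter. Your chain-peeling induction buys a cleaner logical skeleton --- everything reduces to the one-step case plus a covariance statement for the formulas \eqref{eq:qkrk}--\eqref{eq:wkgen} under a single Darboux step --- but it concentrates all the algebra in the telescoping check for $r_k$ with the gauge terms $\sigma_k$ present throughout; the paper's gauge-fixing strips those terms out of the hard computation at the cost of a separate (but very short) conjugation lemma. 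Both arguments ultimately rest on the same Crum-type Wronskian identity (your $\Wr[\tilde\phi_1,\dots,\tilde\phi_j]=b_1^{\,j}\,\phi_{(1,\dots,j+1)}/\phi_1$ is the gauged form of Lemma~\ref{lem:phiij}), and as you note this identity should be stated and proved as a separate lemma rather than merely invoked.
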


Since we assume that the eigenvalues $\lambda_1, \ldots, \lambda_n$ are distinct, $\phi_1, \ldots, \phi_n$ are linearly independent. Hence, the Wronskians in  the denominator of \eqref{eq:omk} are non-zero, and the functions $\upsilon_k$ are well-defined.

\begin{remark}
The well-known Crum formula for an $n$-step Darboux transformation of a
  Schr\"odinger operator
\[ T_n=T_0 -2 (\log \phi_{(1, 2, \ldots, n)})'' \]  
 is a special case of Theorem ~\ref{thm:fachain} that corresponds to starting from a Schr\"odinger operator ($p=-1$ and $q_0=0$) and choosing the gauge $b_1=\ldots = b_n = 1$. Thus, this result can be regarded as an extension of the Darboux-Crum formula from Schr\"odinger operators to general second order operators. Although for the purpose of the paper we restrict to second order operators with rational coefficients and polynomial eigenfunctions, the generalized Crum formula in Theorem ~\ref{thm:fachain} is valid on a general setting.
\end{remark}

The rest of this section is devoted to the proof of Theorem ~\ref{thm:fachain}. The strategy is to first establish the result for the particular gauge $b_1=\cdots = b_n = 1$ and then show the transformation rules under a different gauge. Most of the proofs proceed by induction and make use of differential algebra and properties of Wronskian determinants. We will first need to state and prove a number of auxiliary lemmas, where operators with tilde denote the restriction of the same objects in  Theorem~\ref{thm:fachain} to the special case $b_1=\cdots = b_n = 1$.

\begin{lem}
    \label{lem:phiij}
   Under the same setting as in Theorem ~\ref{thm:fachain}, let  $\tA_k = D- \tw_k$ for $k\in\{1,\dots, n\}$ with $\tw_k= \upsilon_k-\upsilon_{k-1}$. Then,
    \begin{gather}
        \label{eq:phiij} 
        (\tA_{k-1} \tA_{k-2} \cdots \tA_1) \phi_j= \frac{\phi_{(1, 2, \ldots,
            k-1, j)} }{\phi_{(1, 2, 
            \ldots, k-1)}} ,\quad 2\le k \le j \le n.
    \end{gather}
\end{lem}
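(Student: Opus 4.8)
The plan is to prove \eqref{eq:phiij} by induction on $k$, starting from the base case $k=2$ and using the definition of $\tw_k$ together with standard Wronskian identities. For the base case $k=2$, the left-hand side is $\tA_1\phi_j = (D-\tw_1)\phi_j = \phi_j' - \upsilon_1 \phi_j$, where $\upsilon_1 = \phi_1'/\phi_1$ by \eqref{eq:omk}. Thus $\tA_1\phi_j = \phi_j' - (\phi_1'/\phi_1)\phi_j = (\phi_1 \phi_j' - \phi_1' \phi_j)/\phi_1 = \Wr[\phi_1,\phi_j]/\phi_1 = \phi_{(1,j)}/\phi_{(1)}$, which matches the claim.

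For the inductive step, assume \eqref{eq:phiij} holds with $k$ replaced by some $\ell$ with $2 \le \ell$, i.e. $(\tA_{\ell-1}\cdots\tA_1)\phi_j = \phi_{(1,\ldots,\ell-1,j)}/\phi_{(1,\ldots,\ell-1)}$ for all $j \ge \ell$. Applying $\tA_\ell = D - \tw_\ell$ with $\tw_\ell = \upsilon_\ell - \upsilon_{\ell-1}$ and using $\upsilon_m = \phi_{(1,\ldots,m)}'/\phi_{(1,\ldots,m)}$, I compute
\begin{equation*}
\tA_\ell \frac{\phi_{(1,\ldots,\ell-1,j)}}{\phi_{(1,\ldots,\ell-1)}} = D\!\left(\frac{\phi_{(1,\ldots,\ell-1,j)}}{\phi_{(1,\ldots,\ell-1)}}\right) - \left(\frac{\phi_{(1,\ldots,\ell)}'}{\phi_{(1,\ldots,\ell)}} - \frac{\phi_{(1,\ldots,\ell-1)}'}{\phi_{(1,\ldots,\ell-1)}}\right)\frac{\phi_{(1,\ldots,\ell-1,j)}}{\phi_{(1,\ldots,\ell-1)}}.
\end{equation*}
Expanding the derivative of the quotient, the terms involving $\phi_{(1,\ldots,\ell-1)}'$ cancel, leaving $\left(\phi_{(1,\ldots,\ell-1,j)}' \phi_{(1,\ldots,\ell)} - \phi_{(1,\ldots,\ell)}' \phi_{(1,\ldots,\ell-1,j)}\right)/\left(\phi_{(1,\ldots,\ell-1)}\phi_{(1,\ldots,\ell)}\right) = \Wr[\phi_{(1,\ldots,\ell)},\phi_{(1,\ldots,\ell-1,j)}]/\left(\phi_{(1,\ldots,\ell-1)}\phi_{(1,\ldots,\ell)}\right)$.

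The key step — and the main obstacle — is then to recognize the numerator via the Sylvester-type Jacobi/Desnanot identity for Wronskians: the Plücker-like relation
\begin{equation*}
\Wr\!\left[\phi_{(1,\ldots,\ell)},\,\phi_{(1,\ldots,\ell-1,j)}\right] = \phi_{(1,\ldots,\ell-1)}\cdot \phi_{(1,\ldots,\ell,j)},
\end{equation*}
which expresses the Wronskian of two Wronskians (each extending the common core $\phi_1,\ldots,\phi_{\ell-1}$ by one function) as the core Wronskian times the Wronskian of all of them. Substituting this identity collapses the expression to $\phi_{(1,\ldots,\ell,j)}/\phi_{(1,\ldots,\ell)}$, which is exactly \eqref{eq:phiij} with $k = \ell+1$, completing the induction. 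I would cite the standard reference for this Wronskian identity (e.g. the Heine-type or Jacobi identity for Wronskian determinants) rather than reprove it, since it is classical; the only care needed is to track the sign conventions so that the identity holds with a plus sign as written, and to note that all denominators are nonzero because the $\phi_i$ are linearly independent (as remarked after the theorem statement), so that every intermediate Wronskian $\phi_{(1,\ldots,m)}$ appearing is a well-defined nonzero quasi-rational function.
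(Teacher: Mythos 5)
Your proposal is correct and follows essentially the same route as the paper's proof: induction on $k$, the base case computed as a $2\times 2$ Wronskian, and the inductive step reduced via the quotient/log-derivative cancellation to the Desnanot--Jacobi-type identity $\Wr[\phi_{(1,\ldots,\ell)},\phi_{(1,\ldots,\ell-1,j)}]=\phi_{(1,\ldots,\ell-1)}\,\phi_{(1,\ldots,\ell,j)}$, which the paper likewise invokes as a well-known Wronskian property without reproving it. The only cosmetic difference is that the paper first rewrites $\tA_\ell$ acting on the quotient as $\varphi_{\ell,\ell}^{-1}\Wr[\varphi_{\ell,\ell},\varphi_{\ell,j}]$ and then uses the scaling rule $\Wr[f/g,h/g]=\Wr[f,h]/g^2$, whereas you expand the quotient derivative directly; the computations are identical in substance.
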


\begin{proof}
  For convenience, set
    \begin{align}
        \label{eq:phiijdef}
        &\varphi_{k\tor{,}j} := (\tA_{k-1} \tA_{k-2} \cdots \tA_1) \phi_j.
    \end{align}
  We argue by induction. The case of $k=2$ follows directly from the definition. Indeed,
    \begin{gather*}
      \varphi_{2,j} = \tA_1 \phi_j = \frac{1}{\phi_1}\Wr[\phi_1,\phi_j] =
      \frac{\phi_{1,j}}{\phi_1}.
    \end{gather*}
    Now, we suppose that \eqref{eq:phiij} holds for a
    particular $k<n$. By the inductive hypothesis, in particular we have
    \[
        \varphi_{k,k} = \frac{\phi_{(1, 2, \ldots, k)}}{\phi_{(1, 2,
          \ldots, k-1)}},\quad
        \tilde\omega_k=\frac{\varphi_{k,k}'}{\varphi_{k,k}} \quad \mbox{ and } \quad
      \tA_k = D - \frac{\varphi_{k,k}'}{\varphi_{k,k}}.
      \]
    Hence,
    \begin{align*}
      \varphi_{k+1,j}
      &= (\tA_k \cdots \tA_1) \phi_j = \tA_k \varphi_{k,j} \\
      &= \frac{1}{\varphi_{k,k}} \Wr \lb \varphi_{k,k}, \varphi_{k,j} \rb 
      = \frac{1}{\varphi_{k,k}} \Wr\lb \frac{ \phi_{(1, 2, \ldots, k-1,
        k)}}{\phi_{(1, 2, \ldots, k-1)}}, \frac{ 
        \phi_{(1, 2, \ldots, k-1, j)}}{\phi_{(1, 2, \ldots, k-1)}} \rb.
    \end{align*}
    By well-known properties of the Wronskian operator, we finally have
    \[ \varphi_{k+1,j} =\frac{1}{\varphi_{k,k}} \frac{ \Wr[\phi_{1, 2, \ldots, k-1,k},
        \phi_{(1, 2, \ldots, k-1,j)}]}{ \lp \phi_{(1, 2,
          \ldots, 
        k-1)} \rp^{2}} = \frac{ \phi_{(1, 2, \ldots, k-1)} \phi_{(1, 2, \ldots, k,
        j)}}{\varphi_{kk} \lp \phi_{(1, 2, \ldots,
        k-1)} \rp^{2}} = \frac{\phi_{(1, 2, \ldots, k, j)}}{\phi_{(1, 2, \ldots,
          k)}}.\]
\end{proof}

The following lemma establishes the desired result in the restricted gauge $b_1=\cdots = b_n = 1$. 

 \begin{lem}\label{lem:tTkint}
   Under the same setting as in Theorem ~\ref{thm:fachain}, let $\tT_0 = T_0$ and
   $\tT_k := p D^2 + \tq_k D + \tr_k$ for $k\in\{1,\ldots n\}$, where
   \begin{align}
     \label{eq:tqk}
     \tq_k
     &:= q_0 + k p'\\
     \label{eq:trk}
     \tr_k :&= r_0+k q_0'  + \frac12 k(k-1) p''+ \upsilon_k p'   + 
            2\upsilon_{k}' p.
   \end{align}
   Let $\tA_k$ be as in Lemma ~\ref{lem:phiij}.
   Then
   \begin{equation}
     \label{eq:tAktTk}
     \tA_k \tT_{k-1} = \tT_k \tA_k,\quad k=1,\ldots, n.
   \end{equation}
 \end{lem}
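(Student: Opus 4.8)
The plan is to establish the intertwining relations \eqref{eq:tAktTk} by induction on $k$. The guiding idea is that $\tA_k$ annihilates a specific eigenfunction of $\tT_{k-1}$ at eigenvalue $\lambda_k$; granting this, the statement reduces to a bounded differential-algebraic identity dictated by a single Riccati equation, which one checks by matching the coefficients of $D^3,D^2,D^1,D^0$ in $\tA_k\tT_{k-1}-\tT_k\tA_k$.

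For the inductive step I would assume $\tA_j\tT_{j-1}=\tT_j\tA_j$ for all $j<k$ (vacuous when $k=1$) and compose these relations, which telescopes to $(\tA_{k-1}\cdots\tA_1)\,\tT_0=\tT_{k-1}\,(\tA_{k-1}\cdots\tA_1)$. Setting $\tpsi_k:=\phi_{(1,\ldots,k)}/\phi_{(1,\ldots,k-1)}$, with the convention $\tpsi_1=\phi_1$, Lemma~\ref{lem:phiij} (and the trivial case $k=1$) gives $\tpsi_k=(\tA_{k-1}\cdots\tA_1)\phi_k$, so that
\[
  \tT_{k-1}\tpsi_k=(\tA_{k-1}\cdots\tA_1)\,\tT_0\phi_k=\lambda_k\tpsi_k .
\]
Since $\tw_k=\upsilon_k-\upsilon_{k-1}=(\log\tpsi_k)'$, equivalently $\tpsi_k=\exp\bigl(\int\tw_k\bigr)$ up to a constant, this eigenvalue equation is exactly the Riccati identity
\[
  p(\tw_k'+\tw_k^2)+\tq_{k-1}\tw_k+\tr_{k-1}=\lambda_k ,
\]
the gauge-one instance of \eqref{eq:pqrwk}.

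Next I would expand $\tA_k\tT_{k-1}-\tT_k\tA_k$ as a rational operator of order at most $3$ and show each coefficient vanishes. The $D^3$-coefficient is $p-p=0$; the $D^2$-coefficient is $\tq_{k-1}+p'-\tq_k$, which vanishes because $\tq_k=q_0+kp'$ by \eqref{eq:tqk}. The $D^1$-coefficient works out to $\tr_{k-1}-\tr_k+\tq_{k-1}'+2p\tw_k'+p'\tw_k$; feeding in the definition \eqref{eq:trk} of $\tr_k$ together with $\tw_k=\upsilon_k-\upsilon_{k-1}$ makes the $p''$, $p'$ and $p$ contributions cancel term by term, leaving the identity $\tr_k-\tr_{k-1}=\tq_{k-1}'+2p\tw_k'+p'\tw_k$. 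Finally, the $D^0$-coefficient works out to $\tr_{k-1}'+p\tw_k''+\tq_k\tw_k'+\tw_k(\tr_k-\tr_{k-1})$; here I would substitute the identity just found for $\tr_k-\tr_{k-1}$ and then differentiate the Riccati identity to eliminate $\tr_{k-1}'$, after which all terms involving $\tw_k''$, $\tw_k\tw_k'$, $\tw_k^2$ and $\tq_{k-1}'\tw_k$ cancel and the remainder collapses to $\tw_k'(\tq_k-\tq_{k-1}-p')=0$. This gives $\tA_k\tT_{k-1}=\tT_k\tA_k$ and closes the induction.

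I expect the $D^0$-coefficient to be the only genuine obstacle: conceptually nothing is deep, since once $\tpsi_k$ is known to be an eigenfunction of $\tT_{k-1}$ at $\lambda_k$ the existence of a Darboux partner $\tT_k$ with $\tA_k\tT_{k-1}=\tT_k\tA_k$ is automatic (as in the proof of Proposition~\ref{prop:fachainequiv}), and the substance of the lemma is merely that this partner's coefficients are the $\tq_k,\tr_k$ displayed in \eqref{eq:tqk}--\eqref{eq:trk}. But forcing the $D^0$-term to vanish requires combining the explicit formula for $\tr_k$, the Riccati identity, and its derivative in exactly the right order, and a single sign slip breaks the cancellation — so that is where I would be most careful.
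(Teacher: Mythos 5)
Your proposal is correct and follows essentially the same route as the paper: the paper likewise reduces the intertwining relation to three coefficient identities (the $D^2$ and $D^1$ ones following directly from \eqref{eq:tqk}--\eqref{eq:trk}, and the $D^0$ one being the derivative of the Riccati expression $p(\tw_k'+\tw_k^2)+\tq_{k-1}\tw_k+\tr_{k-1}$), and then proves $\tT_{k-1}\tpsi_k=\lambda_k\tpsi_k$ by the same induction via Lemma~\ref{lem:phiij} and the telescoped intertwining relations. The only difference is order of presentation — you establish the eigenvalue equation before the coefficient matching, the paper does it after — which is immaterial.
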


\begin{proof}
 By direct
  calculation, we find that
    \begin{align*}
      \tT_k \tA_k &= (pD^2 + \tq_k D+ \tr_k)(D -  \tw_k)\\
              &=   p D^3+ (\tq_k-p\tw_k) D^2 +(\tr_k-\tq_k \tw_k-2 p \tw_k') D  
                - (\tr_k\tw_k +\tq_k\tw_k' + p \tw_k'')\\
      \tA_k \tT_{k-1} &= (D -  \tw_k)(pD^2 + \tq_{k-1} D+ \tr_{k-1})\\
              &=  p D^3+ (\tq_{k-1}-p\tw_k+p') D^2 + \big(\tr_{k-1} -
                \tq_{k-1}\tw_k + \tq_{k-1}' \big)D+ \tr_{k-1}'- \tw_k \tr_{k-1} 
    \end{align*}
    Hence, by inspection of the coefficients, the desired intertwining
    relation is equivalent to the following three relations:
    \begin{align}
        \label{eq:qkqk-1}
      &\tq_k      = \tq_{k-1}+ p'\\
      \label{eq:rkrk-1}
      &\tr_k = \tr_{k-1} +  \tq_{k-1}' + \tw_{k} p'  +  2 p\tw_{k}' \\
      \label{eq:trktwk}
      &\tr_k\tw_k +\tq_k\tw_k' + p \tw_k'' =  \tr_{k-1}\tw_k - \tr_{k-1}'
    \end{align}
    By inspection, \eqref{eq:tqk} entails \eqref{eq:qkqk-1}.  Then,
    using \eqref{eq:tqk}, \eqref{eq:trk} and $\tilde w_k=\upsilon_k-\upsilon_{k-1}$, we find that
    \begin{align*}
      \tr_k - \tr_{k-1} - \tq_{k-1}'
      & = q_0' + (k-1) p'' + (\upsilon_k - \upsilon_{k-1}) p' + 2(
        \upsilon_{k}' - \upsilon_{k-1}') p  -(q'_0 + (k-1)p'' )\\ 
      & = \tw_k p' + 2 \tw_k'p,
    \end{align*}
    which establishes \eqref{eq:rkrk-1}. Using \eqref{eq:qkqk-1} and \eqref{eq:rkrk-1}, we can rewrite
    \eqref{eq:trktwk} as
    \begin{align*}
      0&=(\tr_k-\tr_{k-1})\tw_k +\tq_k\tw_k' + p \tw_k'' + \tr_{k-1}'\\
       &=(\tq_{k-1}' + p'  \tw_{k}+2p \tw_k')\tw_k    +(\tq_{k-1}+p')\tw_k'
         + p \tw_k''   +\tr_{k-1}'\\ 
       & =\big( p(\tw_k' + \tw_k^2)+ \tq_{k-1} \tw_k + \tr_{k-1}\big)'
    \end{align*}
    Let $\tpsi_k$ denote
    \[\tpsi_k =\varphi_{k,k}= \frac{\phi_{(1,\ldots,k)}}{\phi_{(1,\ldots, k-1)}},\quad \tw_k = (\log\tpsi_k)',\]
    and observe that
    \[ p(\tw_k' + \tw_k^2)+ \tq_{k-1} \tw_k + \tr_{k-1} = \frac{\tT_{k-1}\tilde \psi_k}{\tilde \psi_k},\]
    so \eqref{eq:trktwk} is equivalent to establishing  
    \[\tT_{k-1} \tpsi_k = \lambda_k \tpsi_k,\qquad k \in\{1,\dots, n\}.\] 
    
    The rest of the proof follows by induction.  The base case holds because
    $\tpsi_1 = \phi_1$ and by assumption,
    $T_0 \phi_1= \lambda_1 \phi_1$. Now, suppose that we have established
    \eqref{eq:tAktTk} for $j = 1,\ldots, k-1$. By Lemma
    ~\ref{lem:phiij} and the inductive hypothesis,
    \begin{align*}
      \tT_{k-1} \tpsi_k &= \tT_{k-1} \tA_{k-1} \cdots \tA_1 \phi_k 
        = \tA_{k-1}  \tT_{k-2} \tA_{k-2} \cdots \tA_1 \phi_k \\
        &= \tA_{k-1} \cdots \tA_1 T_0 \phi_k 
        =\lambda_k \tA_{k-1} \cdots \tA_1  \phi_k
        = \lambda_k \tpsi_k.
    \end{align*}
\end{proof}



The next lemma shows the gauge transformation that connects $T_k$ with $\tT_k$.
\begin{lem}
  \label{lem:gaugexform}
  Let $T_k$ be as in Theorem ~\ref{thm:fachain} and $\tilde T_k$ be as in Lemma ~\ref{lem:tTkint}. Setting
  $s_k := b_1\dots b_k$, we have
 \begin{equation}
   \label{eq:gaugexform}
   T_k = s_k \tT_k s^{-1}_k,\quad k=1,\ldots, n.
 \end{equation}
\end{lem}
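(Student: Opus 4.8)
The plan is to verify the conjugation identity $T_k = s_k \tT_k s_k^{-1}$ directly by computing how conjugating a second-order operator $pD^2 + \tq_k D + \tr_k$ by the scalar function $s_k = b_1 \cdots b_k$ changes its coefficients, and then matching the result against the definitions \eqref{eq:qkrk} of $q_k, r_k$ in Theorem~\ref{thm:fachain}. Recall $\sigma_k = \sum_{j=1}^k (\log b_j)' = s_k'/s_k$, so it is convenient to write everything in terms of $\sigma_k$. Conjugation by a function is a gauge transformation: if $D$ acts on $\phi$, then $s_k D s_k^{-1}$ acting on $\phi$ equals $s_k (s_k^{-1}\phi)' = \phi' - (s_k'/s_k)\phi = (D - \sigma_k)\phi$. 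Hence $s_k D s_k^{-1} = D - \sigma_k$, and more generally $s_k D^2 s_k^{-1} = (D-\sigma_k)^2 = D^2 - 2\sigma_k D + (\sigma_k^2 - \sigma_k')$. Since $p$ is just a function (multiplication operator), it commutes with conjugation by $s_k$, so
\[
 s_k \tT_k s_k^{-1} = p(D-\sigma_k)^2 + \tq_k(D-\sigma_k) + \tr_k
  = pD^2 + (\tq_k - 2p\sigma_k) D + \big(\tr_k - \tq_k\sigma_k + p(\sigma_k^2 - \sigma_k')\big).
\]

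The first step is therefore to record the two elementary conjugation formulas above and expand $s_k \tT_k s_k^{-1}$ into the standard form $pD^2 + (\cdots)D + (\cdots)$. The second step is to substitute the explicit expressions $\tq_k = q_0 + kp'$ and $\tr_k = r_0 + kq_0' + \tfrac12 k(k-1)p'' + \upsilon_k p' + 2\upsilon_k' p$ from Lemma~\ref{lem:tTkint}. For the first-order coefficient this gives $\tq_k - 2p\sigma_k = q_0 + kp' - 2p\sigma_k$, which is exactly $q_k$ in \eqref{eq:qkrk}. For the zeroth-order coefficient, substituting gives
\[
 r_0 + kq_0' + \tfrac12 k(k-1)p'' + \upsilon_k p' + 2\upsilon_k' p - (q_0 + kp')\sigma_k + (\sigma_k^2 - \sigma_k')p,
\]
and the third step is simply to observe that, after grouping the terms multiplied by $p$, namely $2\upsilon_k' p + (\sigma_k^2 - \sigma_k')p = (\sigma_k^2 - \sigma_k' + 2\upsilon_k')p$, this coincides term-by-term with the expression for $r_k$ in \eqref{eq:qkrk}. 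This completes the identification.

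This lemma is essentially a bookkeeping computation, so I do not expect a genuine obstacle; the only thing requiring care is being consistent about signs in the conjugation formula (it is $D - \sigma_k$, not $D + \sigma_k$, since $s_k D s_k^{-1}\phi = \phi' - (\log s_k)'\phi$) and remembering that $\sigma_k$ rather than any $\upsilon$ or $w$ is the relevant log-derivative here. One small point worth stating explicitly for the reader is why $s_k$ is a legitimate conjugating factor at the level of rational operators: $s_k$ is a product of the rational functions $b_j$, so $s_k'/s_k = \sigma_k$ is rational and $s_k \tT_k s_k^{-1}$ again has rational coefficients, consistent with the rational-operator framework of Section~\ref{sec:algDar}. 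I would also note in passing that, combined with Lemma~\ref{lem:tTkint} and the relation $\tA_k = s_{k-1}^{-1} s_k \cdot \text{(the correctly gauged } A_k) $, this lemma is the final ingredient needed to transfer the intertwining relations $\tA_k \tT_{k-1} = \tT_k \tA_k$ to $A_k T_{k-1} = T_k A_k$ and thereby finish the proof of Theorem~\ref{thm:fachain}.
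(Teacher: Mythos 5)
Your proposal is correct and follows essentially the same route as the paper: both compute the conjugation formulas $s_k D s_k^{-1} = D-\sigma_k$ and $s_k D^2 s_k^{-1} = D^2 - 2\sigma_k D + \sigma_k^2 - \sigma_k'$, expand $s_k\tT_k s_k^{-1}$, and match the resulting coefficients against \eqref{eq:qkrk}; the only difference is that you write out the term-by-term substitution of $\tq_k$ and $\tr_k$ explicitly, which the paper leaves as an implicit check.
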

\begin{proof}
  Observe that $\sigma_k = (\log s_k)'$. By direct calculation,
  \begin{align*}
    s_k D s_k^{-1}
    &= D-\sigma_k,\\
    s_k D^2 s_k^{-1}
    &= (s_k D s_k^{-1})^2
    = (D-\sigma_k)^2 = D^2 - 2\sigma_k D + \sigma_k^2 - \sigma_k',
  \end{align*}
  Hence,
  \begin{align*}
    q_k &=  \tq_k - 2\sigma_k p,\\
    r_k &= \tr_k +(\sigma_k^2-\sigma_k')p -  \sigma_k \tq_k,
  \end{align*}
  as was to be shown.
\end{proof}

With all the previous elements, proving Theorem ~\ref{thm:fachain} for general factorization gauges becomes a straightforward computation

\begin{proof}[Proof of Theorem ~\ref{thm:fachain}]
  We observe that $A_k$ is related with $\tilde A_k$ in Lemma ~\ref{lem:phiij} by $s_k=b_1\dots b_k$ as follows:
  \[ s_k \tA_k s_k^{-1} = D- \tw_k - \sigma_k = A_k.\]
  Hence, by Lemma ~\ref{lem:gaugexform},
  \[ T_k A_k  = s_k \tT_k \tA_k s_k^{-1} = s_k \tA_k
    \tT_{k-1} s_{k}^{-1} = A_k T_{k-1},\]
  as was to be shown.
\end{proof} 

\begin{remark}
  In light of Lemma ~\ref{lem:gaugexform}, a different choice of
  $b_1,\ldots, b_n$ results in a gauge transformation of the operators
  $T_1,\ldots, T_n$.  It is for this reason that we refer to
  $b_1,\ldots, b_n$ as factorization gauges.  Moreover, by
  \eqref{eq:qkrk}, the coefficients of $T_n$ are defined directly in
  terms of $\sigma_n= (\log s_n)'$ and $\upsilon_n$.  The Wronskian
  $\phi_{(1, 2, \ldots, n)}$ is alternating in its indices and its
  log-derivative $\upsilon_n$ is invariant with respect to
  permutations of the set $\{ 1,2, \ldots, n \}$.  Thus, we see that
  $T_n$ depends only on the choice of the seed eigenfunctions
  $\phi_1,\ldots, \phi_n$ ---  irrespective of their order --- and on
  the product of the factorization gauges $s_n= b_1 \cdots b_n$.
  Fixing the seed eigenfunctions, but choosing a different $s_n$
  amounts to a gauge transformation of $T_n$.
\end{remark}

\section{Confluent Darboux transformations} 
\label{sec:CDT}

In this section we generalize the concept of Darboux transformations  to allow
for repeated eigenvalues. Notice that the  construction in Section ~\ref{sec:algDar} fails
if the eigenvalues of the factorization eigenfunctions are not all distinct, because then some of the seed
functions may not be linearly independent, which leads to the
vanishing of the Wronskians in the denominator of \eqref{eq:omk}. In order to allow for
repeated eigenvalues, we will allow some of our seed eigenfunctions to
become generalized eigenfunctions.

\begin{definition}
  We say that two second-order rational operators $T_0$ and  $T_2$ are connected by a \emph{confluent Darboux transformation} if there exists a second-order
  rational operator $T_1$ such that $T_0 \to T_1 \to T_2$ is a 2-step
  Darboux transformation and the corresponding eigenvalues, as
  defined in Proposition~\ref{prop:fchaineigens}, satisfy
  $\lambda_1 = \lambda_2$.
\end{definition}


As we shall see,  the factorization eigenfunction for the second step $T_1\to T_2$ will not be related to an eigenfunction of $T_0$ but to a generalized eigenfunction, which motivates  the following definition.

\begin{definition}
  Let $T$ be a linear differential operator. We say that $\phi$ is an
  \emph{$n\supth$ order generalized eigenfunction} of $T$ if
  $(T - \lambda)^n \phi= 0$, but $(T-\lambda)^{n-1}\phi \ne 0$.
\end{definition}

For our purposes it will be sufficient to use only
first-order generalized eigenfunctions; however, the more general
construction can be found in \cite{CASH17}.  We now show that a CDT
(confluent Darboux transformation) can be generated by a seed
eigenfunction and a corresponding second-order generalized
eigenfunction.

In order to build a CDT, we start with a second-order rational operator
$T_0 = p D^2 + q_0 D + r_0$, a quasi-rational seed eigenfunction
$\phi$ with eigenvalue $\lambda$ and rational factorization gauges
$b_1$ and $b_2$.  Let $T_0 \to T_1$ be a 1-step Darboux
transformation with factorization function $\phi$ at factorization eigenvalue $\lambda$. We now wish to perform a second Darboux transformation on $T_1$ using the repeated
eigenvalue $\lambda$ and factorization gauge $b_2$. As shown below,
this requires that the second seed function be a generalized
eigenfunction of $T_0$.  We therefore seek to construct a function
$\phi^{(1)}$ such that $(T_0 - \lambda) \phi^{(1)}= \phi$. The following lemma shows how to achieve this.
\begin{lem}\label{lem:geneigen}
  Let $T_0=pD^2+q_0D+r_0$ and let $\phi$ an eigenfunction of $T_0$ with eigenvalue $\lambda$, i.e. $T_0\phi=\lambda\phi$.
  Then the  particular solution of the inhomogeneous equation $(T_0 - \lambda) y= \phi$ is given by
  \begin{equation}
    \begin{aligned}
        \label{eq:phi2int0}
        \phi^{(1)}(z) &=  \phi(z) \int^z \lp\frac{
           \mu(u) }{ \phi(u)^2 }\int^u\frac{\phi^2(s)}{p(s)\mu(s)}ds\rp du, 
    \end{aligned}
\end{equation}
where 
\begin{equation}
  \label{eq:mudef}
  \mu(z) :=
  \exp\lp - \int^z \frac{q_0(u)}{p(u)} du \rp, 
\end{equation}
Also, a linearly independent solution of the homogenous equation $(T_0-\lambda)y=0$ is given by
\begin{equation}
       \phiperp(z):= \phi(z) \int^z \frac{\mu(u)}{\phi(u)^2} du.
\end{equation}
\end{lem}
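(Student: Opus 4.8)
The plan is to verify directly that the claimed formulas give solutions of the stated ODEs by substitution, organizing the computation around the reduction-of-order structure of a second-order operator. First I would recall that for $T_0 = pD^2 + q_0 D + r_0$ with known eigenfunction $\phi$ (so $(T_0-\lambda)\phi=0$), the standard reduction of order suggests looking for a second solution of the homogeneous equation in the form $\phi^{\perp} = \phi\, v$ for some function $v$. Plugging into $(T_0-\lambda)(\phi v)=0$ and using $(T_0-\lambda)\phi=0$, the zeroth-order terms cancel and one is left with a first-order equation for $v'$, namely $p(\phi v'' + 2\phi' v') + q_0 \phi v' = 0$, i.e. $\dfrac{v''}{v'} = -\dfrac{2\phi'}{\phi} - \dfrac{q_0}{p}$. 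Integrating gives $v' = \phi^{-2}\exp\!\lp-\int q_0/p\rp = \mu/\phi^2$ with $\mu$ as in \eqref{eq:mudef}, hence $v = \int^z \mu(u)/\phi(u)^2\,du$ and $\phi^{\perp} = \phi \int^z \mu(u)/\phi(u)^2\,du$. This is manifestly linearly independent from $\phi$ since $v$ is non-constant (its derivative $\mu/\phi^2$ is not identically zero). That settles the homogeneous part.

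For the inhomogeneous equation $(T_0-\lambda)y=\phi$, I would use variation of parameters against the fundamental system $\{\phi,\phi^{\perp}\}$, or equivalently continue the reduction-of-order ansatz $y=\phi\, V$ with $V$ now to be determined from a first-order inhomogeneous equation. Substituting $y=\phi V$ into $(T_0-\lambda)y=\phi$ and again cancelling the $(T_0-\lambda)\phi$ terms yields $p(\phi V'' + 2\phi' V') + q_0\phi V' = \phi$, i.e. $(pV')' \cdot (\text{integrating factor})$ — more precisely, multiplying through by the appropriate integrating factor $\mu/(p\phi)$ turns the left side into $\dfrac{d}{dz}\!\lp \dfrac{\mu \phi^2}{p\phi}\cdot\dfrac{?}{?}\rp$; carrying this out cleanly, one finds $\lp \phi^2 \mu^{-1}\, \mu \cdot \text{stuff}\rp$ — concretely, the equation for $V'$ is $\lp \phi^2\mu\, p^{-1}\rp$ appearing so that $\dfrac{d}{dz}\!\lp \dfrac{\phi^2}{\mu}\cdot ?\rp$. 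To avoid the bookkeeping I will instead just differentiate the proposed $\phi^{(1)}$ twice and substitute: writing $\phi^{(1)} = \phi\, \Phi$ with $\Phi(z) = \int^z \lp \mu(u)\phi(u)^{-2}\int^u \phi^2(s)/(p(s)\mu(s))\,ds\rp du$, we have $\Phi' = \mu\phi^{-2}\Psi$ where $\Psi(u)=\int^u \phi^2/(p\mu)$, so $\Psi' = \phi^2/(p\mu)$. Then $(T_0-\lambda)(\phi\Phi) = \phi\cdot\lp p\Phi'' + 2p\dfrac{\phi'}{\phi}\Phi' + q_0\Phi'\rp$; compute $\Phi'' = \lp\mu\phi^{-2}\rp'\Psi + \mu\phi^{-2}\Psi'$, and observe that $\lp\mu\phi^{-2}\rp' = \mu\phi^{-2}\lp -\dfrac{q_0}{p} - \dfrac{2\phi'}{\phi}\rp$ by definition of $\mu$. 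Substituting, the terms proportional to $\Psi$ cancel precisely $2p(\phi'/\phi)\Phi' + q_0\Phi'$, leaving $\phi\cdot p\,\mu\phi^{-2}\Psi' = \phi\cdot p\mu\phi^{-2}\cdot\phi^2/(p\mu) = \phi$, as required.

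The main obstacle — really the only non-routine point — is getting the cancellation in the inhomogeneous case transparent rather than miraculous; the clean way is to note that $\mu$ is chosen exactly so that $(\mu\phi^{-2})'/(\mu\phi^{-2}) = -q_0/p - 2\phi'/\phi$, which is precisely the log-derivative combination that makes $p\Phi'' + (2p\phi'/\phi + q_0)\Phi'$ collapse to $p\mu\phi^{-2}\Psi'$ when $\Phi' = \mu\phi^{-2}\Psi$. I would state this identity once at the start as the crux of the computation, and then both the homogeneous claim (take $\Psi\equiv\text{const}$, so $\Psi'=0$ and the whole expression vanishes) and the inhomogeneous claim follow from the same one-line substitution. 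One should also note in passing that $\phi^{(1)}$ is only a \emph{particular} solution and is determined only up to adding arbitrary multiples of $\phi$ and $\phi^{\perp}$ (reflecting the two indefinite integrals), and that $\phi^{(1)}$ is quasi-rational when $\phi$ is, since all the integrands are rational — though strictly this last remark is not needed for the statement as given.
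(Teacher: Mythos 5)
Your proof is correct. The homogeneous part (reduction of order for $\phi^{\perp}$) is exactly the paper's argument. For the inhomogeneous part you diverge: the paper \emph{derives} $\phi^{(1)}$ by variation of parameters against the fundamental system $\{\phi,\phi^{\perp}\}$, obtaining $\phi^{(1)}=\hrho\,\phi+\rho\,\phiperp$ with $\rho=\int^z\phi^2 W$, $\hrho=-\int^z\phi\,\phiperp W$, $W=(p\mu)^{-1}$, and then integrates by parts to collapse this into the nested-integral formula \eqref{eq:phi2int0}; you instead take the nested-integral formula as given and \emph{verify} it by direct substitution, isolating the identity $(\mu\phi^{-2})'/(\mu\phi^{-2})=-q_0/p-2\phi'/\phi$ as the single cancellation that makes $p\Phi''+(2p\phi'/\phi+q_0)\Phi'$ collapse to $p\mu\phi^{-2}\Psi'$. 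Your route is shorter and makes the mechanism of the cancellation transparent (and handles the homogeneous case as the special case $\Psi'=0$); what it gives up is the constructive byproduct of the paper's derivation, namely the explicit function $\rho(z)=\int^z\phi^2 W$ of \eqref{eq:rhodef}, which is not merely scaffolding but reappears throughout Sections 3 and 4 (in $\psi_2=\rho\,\psiperp$, in $\tau_m$, and in the rationality discussion). Note that your $\Psi$ \emph{is} this $\rho$, so nothing is lost mathematically, but if this proof were to replace the paper's you would want to name $\Psi=\rho$ explicitly for later use. The exploratory passage in your second paragraph with the unresolved integrating-factor bookkeeping (the ``$?$'' placeholders) should be deleted, since the clean substitution argument that follows supersedes it.
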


\begin{remark}
  Having fixed $\mu$ as in \eqref{eq:mudef}, the above definition of $\phi^{(1)}$ incorporates two additional constants of integration which correspond to linear combination of the solutions of the homogeneous equation, $\phi$ and $\phi^\perp$. We will fix one of these constants once we consider an explicit form for $p$, by imposing an appropriate lower bound for the integral. The other constant of integration serves as a natural deformation parameter in the CDT construction. This will be explained in more detail later on.

\end{remark}

\begin{proof}[Proof of Lemma ~\ref{lem:geneigen}]
We first consider a
complementary solution to the eigenvalue equation $T_0 y = \lambda y$,
which we will denote $\phiperp$. We can obtain an explicit formula
for $\phiperp$ via reduction of order. Substituting the form
$\phiperp := f \phi$ into the equation
$T \phiperp = \lambda \phiperp$ yields the equation
\begin{gather*}
    \frac{f''}{f'}  = - \frac{q_0}{p}  - 2 w,\quad \mbox{ with } w= \frac{\phi'}{\phi}.
\end{gather*}
Noticing that  $\frac{\mu'}{\mu} = -\frac{q_0}{p}$, it follows that
\begin{align*}
    f(z) = \int^z \frac{\mu(u)}{\phi(u)^2} du.
\end{align*}
The above definitions are purely formal in that we have not specified
the lower bound of the integrals. This means that $\phiperp$ is
defined up to a constant multiple of $\phi$, and that $\mu$ is defined
up to a choice of positive multiplicative constant.

We now construct $\phi^{(1)}$ 
using variation of parameters. We set
\begin{equation}
    \label{eq:phi2def}
    \phi^{(1)} := \hrho \phi + \rho \phiperp,
\end{equation}
where $\hrho$ and $\rho$ are unknown functions satisfying
\begin{gather*}
    \hrho{}' \phi + \rho' \phiperp = 0.
\end{gather*}
Since $ (T_0 - \lambda)  \phi^{(1)}  = \phi$, then they also satisfy
\begin{align*}
    p\big(\hrho''\phi+2\hrho'\phi'+\rho''\phiperp+2 \rho'(\phiperp)'\big)=\hrho \phi + \rho \phiperp.
\end{align*}

Solving for the functions $\hrho$ and $\rho$ satisfying the above
system of equations, we find that
\begin{align}\nonumber
      \hrho(z) &= - \int^z \phi(u) \phi^{\perp}(u) W(u) du , \\\label{eq:rhodef}
      \rho(z) &= \int^z \phi^2(u) W(u) du ,
\end{align}

where
\begin{equation}
    \label{eq:Wpmu}
    W(z) := \big(p(z)\mu(z)\big)^{-1} = p(z)^{-1} \exp \lp \int^z
    \frac{q_0(u)}{p(u)} du \rp. 
\end{equation}
Now integrating by parts it follows that we can express $\phi^{(1)}$ as 
\begin{equation}
    \begin{aligned}
        \label{eq:phi2int}
        \phi^{(1)}(z) &= \phi(z) (f(z)\rho(z)+\hrho(z)) =
        \phi(z) \lp f(z) \rho(z) - \int^z f(u) \rho'(u) du \rp \\  
        &= \phi(z) \int^z \rho(u) f'(u) du = \phi(z) \int^z \frac{
          \rho(u) \mu(u) }{ \phi(u)^2 } du.
    \end{aligned}
\end{equation}
\end{proof}

After the first Darboux transformation $T_0\to T_1$ at factorization eigenvalue $\lambda$, we define the intertwiner $A_1 = b_1(D- w_1)$, with $w_1=(\log\phi)'$. The first candidate for factorization function for the second Darboux transformation $T_1\to T_2$ would be the image of $\phiperp$ under $A_1$:
\begin{equation}
    \label{eq:psi1perp}
    \psi^{\perp} := A_1 \phiperp = b_1\frac{\mu}{\phi}.
\end{equation}

Indeed, since $T_1 \psi^{\perp} = \lambda \psi^{\perp}$, we could employ $\psi^{\perp}$ as a factorization eigenfunction for a 1-step Darboux transformation on $T_1$. However, this choice of eigenfunction produces the
\emph{inverse Darboux transformation} $T_1 \to T_0$.

In order to construct an operator $T_2$ distinct from $T_0$, we need to consider another candidate: the image of the generalized eigenfunction $\phi^{(1)}$ under $A_1$. Indeed, we define the factorization eigenfunction for $T_1$ to be
\begin{equation}
  \label{eq:psi2def}
  \psi_2 := A_1 \phi^{(1)} = \rho \psiperp,
\end{equation}
with $\rho$ as in \eqref{eq:rhodef}. The second equality is true because
\[
    \Wr[\phi,\phi^{(1)}] = \Wr \lb \phi, \phi \int^z \frac{\rho \mu}{ \phi^{2}}
    \rb = \rho \mu \]
and therefore
\[
    \psi_2 = \frac{b_1}{\phi} \Wr[\phi,\phi^{(1)}] = \frac{b_1}{\phi}
    \rho\mu = \rho \psiperp.
\]
 The key observation is that although $\phi^{(1)}$ is only a generalized eigenfunction of $T_0$, its image $\psi_2$ is a true eigenfunction of $T_1$ at eigenvalue $lambda$:
\begin{equation}
    \label{eq:T1psi2}
    (T_1-\lambda)\psi_2= (T_1-\lambda) A_1 \phi^{(1)} = A_1 (T_0-\lambda) \phi^{(1)} = A_1 \phi_1 = 0,
\end{equation}
and thus it can be employed for the second Darboux transformation $T_1\to T_2$.
We summarize the construction of a confluent Darboux transformation $T_0\to T_2$ with a generalized eigenfunction in the following proposition.

\begin{prop}
  Let $T_0=pD^2+q_0D+r_0$ be a second-order rational operator, $\phi$ a
  quasi-rational eigenfunction of $T_0$ with eigenvalue $\lambda$, and
  $b_1, b_2$ a choice of non-zero rational functions.  Let $\mu,\rho$
  be defined as per \eqref{eq:mudef}, \eqref{eq:rhodef}, respectively,  and assume
  that $\rho$ is a quasi-rational function. Let $T_2=pD^2+q_2D+r_2$, where $p_2$ and $r_2$ are defined in \eqref{eq:qkrk} with
  \[ \upsilon_1 := (\log\phi)',\quad \upsilon_2 := \big(\log(\rho\mu)\big)'.\] 
Then
  $T_0 $ and $ T_2$ are connected by a confluent Darboux transformation
\end{prop}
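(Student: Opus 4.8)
The plan is to realize the claimed confluent Darboux transformation as a genuine two-step rational Darboux transformation $T_0 \to T_1 \to T_2$ in the sense of Definition~\ref{def:ratDT}, and then invoke Proposition~\ref{prop:fchaineigens} to identify the two factorization eigenvalues and check they coincide. Concretely, the first step is the ordinary 1-step Darboux transformation $T_0 \to T_1$ with seed eigenfunction $\phi$ and factorization gauge $b_1$; by Theorem~\ref{thm:fachain} (with $n=1$) this is a rational Darboux transformation with intertwiner $A_1 = b_1(D - w_1)$, $w_1 = \upsilon_1 = (\log\phi)'$, and factorization eigenvalue $\lambda$. The operator $T_1$ produced here has coefficients $q_1, r_1$ given by \eqref{eq:qkrk} with $k=1$.

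For the second step, I would take as the seed eigenfunction of $T_1$ the function $\psi_2 = A_1\phi^{(1)} = \rho\psiperp$ from \eqref{eq:psi2def}, where $\phi^{(1)}$ is the generalized eigenfunction from Lemma~\ref{lem:geneigen}. The computation \eqref{eq:T1psi2} already shows $(T_1 - \lambda)\psi_2 = 0$, so $\psi_2$ is a bona fide quasi-rational eigenfunction of $T_1$ with eigenvalue $\lambda$ — this uses the hypothesis that $\rho$ is quasi-rational (and $\mu, \phi$ are quasi-rational), so that $\log\psi_2$ has rational derivative. Applying the 1-step case of Theorem~\ref{thm:fachain} to $T_1$, with seed eigenfunction $\psi_2$ and factorization gauge $b_2$, yields a rational operator $T_2 := pD^2 + q_2 D + r_2$ together with an intertwiner $A_2 = b_2(D - w_2)$ satisfying $A_2 T_1 = T_2 A_2$. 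Chaining the two intertwining relations gives the 2-step rational Darboux transformation $T_0 \to T_1 \to T_2$, and by Proposition~\ref{prop:fchaineigens} its two factorization eigenvalues are $\lambda_1 = \lambda$ (from $\phi$) and $\lambda_2 = \lambda$ (from $\psi_2$); since they agree, $T_0$ and $T_2$ are connected by a confluent Darboux transformation in the sense of the definition.

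It then remains only to check that this $T_2$ is the operator described in the statement, i.e. that its coefficients $q_2, r_2$ are exactly \eqref{eq:qkrk} evaluated at $k=2$ with $\upsilon_1 = (\log\phi)'$ and $\upsilon_2 = (\log(\rho\mu))'$. The natural way to see this is to observe that the two-step chain $T_0 \to T_1 \to T_2$ built above is itself an instance of Theorem~\ref{thm:fachain} with $n=2$: its "seed eigenfunctions of $T_0$" are $\phi$ and $\phi^{(1)}$, with the caveat that $\phi^{(1)}$ is only a generalized eigenfunction. The key identity making this work is $\Wr[\phi, \phi^{(1)}] = \rho\mu$, established in the excerpt just before \eqref{eq:T1psi2}, so that the relevant Wronskian $\phi_{(1,2)}$ in \eqref{eq:omk} equals $\rho\mu$ and hence $\upsilon_2 = (\log(\rho\mu))'$ exactly as claimed; the gauge product is $s_2 = b_1 b_2$ so $\sigma_2 = (\log(b_1b_2))'$, matching the definition of $\sigma_k$ in \eqref{eq:omk}. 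Thus $q_2, r_2$ read off from \eqref{eq:qkrk} coincide with those in the proposition.

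The main obstacle is the one place where Theorem~\ref{thm:fachain} does not literally apply: that theorem assumes \emph{distinct} eigenvalues so that the seed functions are linearly independent and the Wronskians are nonvanishing. Here $\phi^{(1)}$ is a generalized eigenfunction, not an eigenfunction, so one must verify by hand that the formal computation $\tT_1\tpsi_2 = \lambda\tpsi_2$ (the analogue of the induction step in Lemma~\ref{lem:tTkint}) still goes through — and it does, precisely because $(T_0 - \lambda)\phi^{(1)} = \phi$ together with $A_1\phi = 0$ force $A_1(T_0 - \lambda)\phi^{(1)} = 0$, which is exactly \eqref{eq:T1psi2}. In other words, the generalized-eigenfunction property is strong enough to replace the eigenfunction property at the one step where it is needed, and the nonvanishing of $\phi_{(1,2)} = \rho\mu$ is guaranteed by the quasi-rationality hypothesis on $\rho$ (in particular $\rho \not\equiv 0$, since $\rho$ is an antiderivative of $\phi^2 W$ with $\phi^2 W \not\equiv 0$). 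Once this point is handled, everything else is the routine bookkeeping of matching \eqref{eq:qkrk} against the stated formulas, which I would not spell out in full.
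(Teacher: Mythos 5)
Your proposal is correct and follows essentially the same route as the paper: the paper's (very terse) proof likewise defines $T_1, A_1, A_2$ via the formulas \eqref{eq:Tkdef}--\eqref{eq:wkgen} with the stated $b_j,\upsilon_j$, relying on the surrounding construction of $\phi^{(1)}$, the identity $\Wr[\phi,\phi^{(1)}]=\rho\mu$, and the computation \eqref{eq:T1psi2} to see that $\psi_2=A_1\phi^{(1)}$ is a genuine eigenfunction of $T_1$ at the same eigenvalue $\lambda$. Your explicit handling of the one point where Theorem~\ref{thm:fachain} does not literally apply (the distinct-eigenvalue hypothesis, replaced here by the generalized-eigenfunction relation $(T_0-\lambda)\phi^{(1)}=\phi$ and the nonvanishing of $\rho\mu$) is exactly the content the paper leaves implicit in the remark following the proposition.
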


\begin{proof}
  Firstly, we observe that $T_2$ is a second-order rational operator because of the assumption on $\mu$.
  Then, it remains to show there is $T_1$ such that $T_0\to T_1\to T_2$ is a 2-step DT with $\lambda_1=\lambda_2$.
  Defining $T_1, A_1, A_2$ 
  as per \eqref{eq:Tkdef}-\eqref{eq:wkgen}, with $b_j, \upsilon_j$, $j\in\{1,2\}$ as in the statement, the result then follows.  
\end{proof}

\begin{remark}
  Observe that the construction of a confluent Darboux transformation amounts to applying the extended Crum Wronskian formula derived in Theorem~\ref{thm:fachain} for $k=2$ with the only modification that one of the seed functions of $T_0$ is a true eigenfunction, but the second one is a generalized eigenfunction, i.e. apply the formulas in Theorem~\ref{thm:fachain} with
\[ \phi_1=\phi, \quad \phi_2=\phi^{(1)},\quad \lambda_1=\lambda_2=\lambda.\]
\end{remark}

\begin{remark}
  The definition \eqref{eq:phi2int0} of the second seed function $\phi^{(1)}$ involves two indefinite integrals. The outer integral provides no extra freedom, because it means that $\phi^{(1)}$ is defined up to an additive term $C\phi$ but this term will vanish in the Wronskian $\phi_{(1,2)}$. The lower bound of the inner integral, however, introduces a free real parameter, which is a characteristic feature of the CDT.
\end{remark}

\begin{remark}
  The assumption that $\frac{\rho'}{\rho}$ be rational ensures that
  $\upsilon_2$, as defined above, is rational.  This assumption may be restated as the condition that $\phi^2 W$, the integrand of
  \eqref{eq:rhodef}, is a rational function with vanishing residues. Verifying the rationality of the CDT is key to ensure that the transformed operator has polynomial eigenfunctions.
\end{remark}

\begin{remark}
  The confluent aspect of a CDT comes from a conceptual formula for the generalized eigenvalue equation $(T_0 - \lambda) \phi^{(1)} =\phi$. We note that, despite not being made explicit, the seed
eigenfunction $\phi$ depends on the eigenvalue $\lambda$. This
dependence can be recovered by imposing initial conditions on $\phi$
and $\phi'$. Thus, starting from the eigenvalue equation
$(T_0 - \lambda)\phi = 0$, we can differentiate with respect to
$\lambda$ to find that
\begin{gather}
    (T_0 - \lambda) \lb \frac{\partial \phi}{\partial \lambda} \rb = \phi .
\end{gather}
This equation implies a rather simple formula for defining $\phi^{(1)}$,
which is
\begin{gather}
    \phi^{(1)} = \frac{\partial \phi}{\partial \lambda}.
\end{gather}
This formula is not of much practical use, since the functional
dependence on $\lambda$ is typically impossible to express
explicitly. However, this expression is of conceptual importance and
the derivative can be seen as a limiting case of the ordinary Darboux
transformation, where the eigenvalues converge as
$\lambda_2 \to \lambda_1$. Hence the name \emph{confluent} Darboux
transformation.
\end{remark}

In this section we have seen how to build a confluent Darboux transformation as a 2-step  Darboux transformation with a generalized eigenfunction, introducing in the process a free real parameter.
This construction can be iterated at different eigenvalues to create chains of CDTs: perform a CDT
on the first operator $T_0$ at eigenvalue $\lambda_1$, which is
followed by a CDT on $T_2$ at eigenvalue $\lambda_2 \neq \lambda_1$, which yields the operator $T_4$, etc. 

Chains of operators may be constructed through an arbitrary finite number of CDTs in this
fashion, thereby leading to an operator with an arbitrary number of free real parameters, which is Darboux connected to the original $T_0$. In the following section we show how to construct a CDT chain starting on the Gegenbauer polynomials, and leading to the exceptional Gegenbauer polynomials of the second kind.

\section{Exceptional Gegenbauer operators and polynomials}
\label{sec:Gegen}

In this Section we apply the theory developed in Sections~\ref{sec:algDar} and \ref{sec:CDT} to construct a chain of confluent Darboux transformations on the classical Gegenbauer operator. 

\subsection{Definition of exceptional Gegenbauer operators and polynomials} 
In \cite{GFGM19} it was shown that an exceptional operator in the Hermite, Laguerre or Jacobi class must have a very specific form. We define in this section exceptional Gegenbauer polynomials and operators attending to this particular form (as a particular class of exceptional Jacobi operators), postponing for later sections the discussion on how the construction of specific families is achieved. 

\begin{definition}\label{def:GegOp}
  Let $\tau=\tau(z)$ be a non-zero polynomial and $\alpha\in \mathbb{R}$. We say that the differential expression
    \begin{equation}
        \label{eq:GegOp}
        \Tal_\tau(z,D_z) := (1-z^2) \lp D_z^2 - 2 \frac{\tau_z}{\tau} D_z +
        \frac{\tau_{zz}}{\tau} \rp - (2\alpha + 1) z D_z + (2\alpha - 1)
        z \frac{\tau_z}{\tau} .  
    \end{equation}
    is an \emph{exceptional Gegenbauer operator} if $\Tal_\tau$ admits
    eigenpolynomials $\{ \pi_i(z) \}_{i \in \Nz}$ such that the degree
    sequence $\{ \deg \pi_i\}_{i\in\Nz}$ is missing finitely many
    ``exceptional'' degrees.
\end{definition}
In the above definition, it should be stressed that only very specific
polynomials $\tau(z)$ in \eqref{eq:GegOp} will lead to $\Tal_\tau$
being an exceptional Gegenbauer operator, i.e. having an infinite
number of polynomial eigenfunctions. The following sections are
devoted to describing a class of polynomials $\tau$ obtained by
applying a multi-step CDT on $\tau=1$, which ensures that this is
indeed the case.
Note that there is no restriction on the
parameter $\alpha$ at this stage. In the following section we will see
that $\alpha$ must be a half-integer for the Confluent Darboux
transformations to be rational. This means that for standard Darboux
transformations (see Section~\ref{sec:algDar}) the parameter $\alpha$
can be real, leading to $X$-Gegenbauer polynomials of the first kind,
but for confluent Darboux transformations (see Section~\ref{sec:CDT})
the parameter $\alpha\in\Nz+1/2$, leading to excecptional Gegenbauer
polynomials of the second kind.

Observe also, by contrast to classical orthogonal polynomials, that we \emph{are not}
assuming that $\deg \pi_i =i$.  Furthermore, without loss of
generality, it will be convenient to assume that 
\begin{align*}
    \deg \pi_i\neq\deg\pi_j\;  \mbox{ if } i\neq j.
\end{align*}

As usual, we speak of exceptional Gegenbauer polynomials when the
eigenpolynomials $\{ \pi_i(z) \}_{i \in \Nz}$ define a complete
orthogonal polynomial system.

\begin{definition}
    \label{def:GegPol}
    Let $\tau(z)$ be a polynomial and $\alpha\in \mathbb{R}$ .  We say that the set
    $\{\pi_i(z)\}_{i\in\Nz}$ is a family of \emph{exceptional
      Gegenbauer polynomials} with weight
    \begin{equation}
        \label{eq:Wtaldef}
        \Wal_{\tau}(z) :=  \frac{(1-z^2)^{\al-\frac12}}{\tau(z)^{2}} ,
      \end{equation}
    if the following conditions hold:
    \begin{enumerate}
    \item $\tau(z)$ does not vanish on $I=[-1,1]$;
    \item $\{\pi_i(z)\}_{i\in\Nz}$ are eigenpolynomials of an $X$-Gegenbauer operator \eqref{eq:GegOp};
    \item
      The polynomials $\{\pi_i(z)\}_{i\in\Nz}$ form a complete set
      in the Hilbert space $\rL^2 (I,\Wal_\tau)$.
    \end{enumerate}
\end{definition}

Note that there is no need to include an explicit orthogonality
assumption in the above definition, because orthogonality of the
eigenpolynomials follows from assumptions (a) and (b).  Also note that
there is no need for supplementary assumptions regarding the
corresponding eigenvalues as these are necessarily
 quadratic functions of the degree sequence. More specifically, we establish these results in the next two lemmas.
 
 \begin{lem} 
  \label{prop:deglamdai}
 Let  $\Tal_\tau$ be an exceptional Gegenbauer operator and let $\{\pi_i\}_{i\in\Nz}$ be its associated eigenpolynomials,  i.e.
  \begin{equation}
    \label{eq:Ttaupi}
    \Tal_{\tau} \pi_i = \lambda_i \pi_i
  \end{equation}
    Then,  necessarily 
\begin{align}
  \label{eq:lambdaform}
  \lambda_i
  &= -d_i(2\alpha+d_i),\quad \text{where }
  d_i
  =    \deg    \pi_i-\deg\tau 
\end{align}
\end{lem}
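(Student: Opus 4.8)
The plan is to turn the eigenvalue equation \eqref{eq:Ttaupi} into a polynomial identity and then read off \eqref{eq:lambdaform} by comparing top-degree coefficients.

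\textbf{Step 1: clear denominators.} Using the explicit form \eqref{eq:GegOp} of $\Tal_\tau$ and multiplying \eqref{eq:Ttaupi} through by $\tau$, the eigenvalue equation becomes
\[
  (1-z^2)\bigl(\tau\pi_i'' - 2\tau_z\pi_i' + \tau_{zz}\pi_i\bigr) - (2\al+1)z\,\tau\pi_i' + (2\al-1)z\,\tau_z\pi_i = \lambda_i\,\tau\pi_i .
\]
Both sides are honest polynomials --- the left-hand side because $\pi_i$ is assumed to be an eigenpolynomial of the exceptional operator $\Tal_\tau$ --- so one is entitled to compare the coefficient of any monomial $z^N$ on the two sides.

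\textbf{Step 2: compare leading coefficients.} Put $n:=\deg\pi_i$ and $m:=\deg\tau$, and let $c,a\ne 0$ be the leading coefficients of $\pi_i$ and $\tau$. Each of the five summands on the left is manifestly of degree $\le m+n$, and the coefficient of $z^{m+n}$ in each is determined solely by the leading terms $cz^n$ and $az^m$: the contributions are $-acn(n-1)$, $2acmn$, $-acm(m-1)$, $-(2\al+1)acn$, and $(2\al-1)acm$, while the right-hand side contributes $\lambda_i ac$. Dividing the resulting scalar equation by $ac$ gives
\[
  \lambda_i = -n(n-1)+2mn-m(m-1)-(2\al+1)n+(2\al-1)m,
\]
and the elementary identity $-n^2+2mn-m^2-2\al(n-m)=-(n-m)\bigl(2\al+(n-m)\bigr)$ collapses this to $-(n-m)\bigl(2\al+(n-m)\bigr)$. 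With $d_i=n-m=\deg\pi_i-\deg\tau$ this is precisely \eqref{eq:lambdaform}.

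\textbf{On the difficulty.} There is no genuine obstacle here; the whole content is a single leading-coefficient bookkeeping. The only points requiring care are (i) confirming that no lower-order terms of $\tau$ or $\pi_i$ feed into degree $m+n$, which is immediate from the degree count above, and (ii) observing that the argument never uses $\lambda_i\ne 0$, so that the formula is valid also at the (at most two) exceptional degrees at which $d_i(2\al+d_i)=0$.
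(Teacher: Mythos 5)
Your proof is correct and follows essentially the same route as the paper's: multiply the eigenvalue equation by $\tau$, compare the coefficients of $z^{m+n}$ on both sides, and simplify the resulting scalar identity to $-(n-m)(2\al+n-m)$. The only cosmetic difference is that you carry explicit leading coefficients $a,c$ where the paper normalizes $\tau$ and $\pi_i$ to be monic.
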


\begin{proof}
  Let $\tau(z)$ be a polynomial and $\pi_i(z)$ an eigenpolynomial of
  $\Tal_\tau$ with eigenvalue $\lambda_i$.
  Expliciltly, by \eqref{eq:GegOp}, we have
  \begin{equation}
    \label{eq:Ttaupii}
    (1-z^2) \lp \tau \pi_i'' - 2 \tau' \pi_i' + \tau'' \pi_i \rp -
    (2\alpha + 1) z \tau \pi_i' + (2\alpha - 1) z \tau' \pi_i =
    \lambda_i \tau \pi_i .
  \end{equation}
  Let $n=\deg\pi_i$ and $m=\deg \tau$ and without loss of generality
  suppose that both $\tau(z), \pi_i(z)$ are monic; i.e.
  $\tau(z) = z^m + \ldots$ and $\pi(z) = z^n + \ldots$.  Notice that
  the highest power of $z$ on either side of \eqref{eq:Ttaupii} is
  $m+n$. Hence, for the above equation to hold, the two coefficients
  on $z^{m+n}$ must be equal. Considering only the highest power of
  $z$ in each term, yields the equation
    \begin{gather*}
        \lambda_i z^{m+n} = \lp - n (n-1) + 2 m n - m (m-1) - (2\al+1) n + (2\al-1) m \rp z^{m+n} .
    \end{gather*}
    Hence
    \begin{align*}
        \lambda_i &= - n (n-1) + 2 m n - m (m-1) - (2\al+1) n + (2\al-1) m \\
        &= -n^2 + 2mn - m^2 - 2 \al n + 2 \al m \\
        &= - (n-m)^2 - 2 \al (n-m) \\
        &= - (n-m) ( 2\al + n-m ) ,
    \end{align*}
    as was to be shown.
\end{proof}

\begin{lem}
  \label{prop:orth}
  A family of exceptional Gegenbauer polynomials $\{\pi_i\}_{i\in\Nz}$  is necessarily
  orthogonal with respect to the corresponding weight \eqref{eq:Wtaldef}:
  \begin{equation}\label{eq:orth}
    \int_{I} \pi_{i}(z)\pi_{j}(z) \Wal_\tau(z) dz = 0\;\mbox{ if } i\neq j.
  \end{equation}
\end{lem}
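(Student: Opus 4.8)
The plan is to show that the exceptional Gegenbauer operator $\Tal_\tau$ is symmetric (formally self-adjoint) with respect to the weight $\Wal_\tau$ on $I=[-1,1]$, and then invoke the standard Sturm--Liouville argument: eigenfunctions with distinct eigenvalues are orthogonal, and by Lemma~\ref{prop:deglamdai} together with the assumption $\deg\pi_i\neq\deg\pi_j$ for $i\neq j$ the eigenvalues $\lambda_i$ are indeed distinct (the map $d\mapsto -d(2\alpha+d)$ is injective on the relevant index set after possibly excluding the degenerate value $d=-\alpha$, which corresponds to at most one degree).

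First I would rewrite $\Tal_\tau$ in formally self-adjoint (Sturm--Liouville) form. The natural guess is that
\[
  \Tal_\tau y = \frac{1}{\Wal_\tau}\, D_z\!\lp (1-z^2)\,\Wal_\tau\, \tau^2\, D_z\!\lp \frac{y}{\tau}\rp \tau \rp \cdot \frac{1}{\tau}
\]
or, more cleanly, that after the gauge substitution $y = \tau\, u$ the operator conjugates to a classical-type Sturm--Liouville operator. Concretely, set $\hat y = y/\tau$; a direct computation using the explicit form \eqref{eq:GegOp} should show that
\[
  \tau\cdot \Tal_\tau(\tau \hat y) = (1-z^2)\,\hat y_{zz} \;-\; \big(2(1-z^2)\tfrac{\tau_z}{\tau} + (2\alpha+1)z\big)\hat y_z ,
\]
i.e. the zeroth-order term cancels, and the first-order coefficient is exactly $-\big((1-z^2)(\tau^2)_z/\tau^2 + (2\alpha+1)z\big)$ up to the sign convention. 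This is the key algebraic identity and it is essentially a bookkeeping verification from the definition of $\Tal_\tau$; the term $(2\alpha-1)z\tau_z/\tau$ in \eqref{eq:GegOp} is precisely what is needed to make this work. Then the operator $\hat y\mapsto \tau\,\Tal_\tau(\tau\hat y)$ is manifestly of the form $\tfrac{1}{\hat W}D_z(P\, \hat W\, D_z\,\cdot)$ with $P=(1-z^2)$ and $\hat W = (1-z^2)^{\alpha-\frac12}\tau^2 = \Wal_\tau\,\tau^4$ — in any case a positive weight on $(-1,1)$ — which makes it symmetric on $\rL^2$ of that weight. Translating back via $y=\tau\hat y$, symmetry of $\Tal_\tau$ with respect to $\Wal_\tau = (1-z^2)^{\alpha-\frac12}/\tau^2$ follows.

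Next, with symmetry in hand, I would carry out the usual computation: for $i\neq j$,
\[
  (\lambda_i-\lambda_j)\int_I \pi_i\pi_j\,\Wal_\tau\,dz
  = \int_I \big((\Tal_\tau\pi_i)\pi_j - \pi_i(\Tal_\tau\pi_j)\big)\Wal_\tau\,dz
  = \Big[ (1-z^2)\,\Wal_\tau\,\tau^2\,W\!\big[\tfrac{\pi_i}{\tau},\tfrac{\pi_j}{\tau}\big]\Big]_{-1}^{1},
\]
where the last equality is Lagrange's identity for the Sturm--Liouville form. The boundary term vanishes because $(1-z^2)\Wal_\tau = (1-z^2)^{\alpha+\frac12}/\tau^2$ vanishes at $z=\pm1$ (here we use $\alpha > -\frac12$; the factor $\tau^2$ is bounded away from zero on $I$ by assumption (a), and the Wronskian-type bracket is a rational function with no poles on $I$, hence bounded near $\pm1$). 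Since $\lambda_i\neq\lambda_j$, the integral must vanish, which is \eqref{eq:orth}.

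The main obstacle I anticipate is twofold. The algebraic part — verifying the self-adjoint form of $\Tal_\tau$ with respect to $\Wal_\tau$ — is routine but must be done carefully, since the zeroth-order coefficient $(1-z^2)\tau_{zz}/\tau + (2\alpha-1)z\tau_z/\tau$ has to be exactly reproduced by the gauge conjugation, and a sign or coefficient slip would be fatal. The analytic part — ensuring the boundary terms vanish and the integrals converge — requires knowing $\alpha > -\frac12$ (so that $(1-z^2)^{\alpha+\frac12}\to 0$ at the endpoints) and that $\tau$ has no zeros on $I$; the former may need to be noted as a standing hypothesis for the weight to define a genuine $\rL^2$ space, the latter is guaranteed by Definition~\ref{def:GegPol}(a). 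In the statement as given (Lemma~\ref{prop:orth}) we are already in the setting of Definition~\ref{def:GegPol}, so both conditions are available, and the argument goes through.
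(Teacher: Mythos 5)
Your overall strategy --- put $\Tal_\tau$ into Sturm--Liouville form relative to $\Wal_\tau$, apply Lagrange's identity, and kill the boundary term using $\al>-\tfrac12$ and the non-vanishing of $\tau$ on $I$ --- is exactly the paper's, and your final display (the Lagrange identity with boundary term $(1-z^2)^{\al+\frac12}\tau^{-2}\Wr[\pi_i,\pi_j]$) is correct. However, the mechanism you propose for getting there contains a genuine error: the ``key algebraic identity'' is false. Substituting $y=\tau\hat y$ into \eqref{eq:GegOp} gives
\[
  \frac{1}{\tau}\,\Tal_\tau(\tau\hat y) = (1-z^2)\hat y'' - (2\al+1)z\,\hat y' + \Big(2(1-z^2)(\log\tau)'' - 2z(\log\tau)'\Big)\hat y ,
\]
so the zeroth-order term does \emph{not} cancel (it would only do so if $\tau$ were a formal eigenfunction of $\Tal_\tau$, which it is not in general), and the first-order coefficient loses its $\tau$-dependence entirely rather than acquiring the term $-2(1-z^2)\tau_z/\tau$ that you predict. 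Consequently the conjugated operator is not of the pure divergence form $\hat W^{-1}D_z(P\,\hat W D_z\,\cdot\,)$ for any weight $\hat W$, and formal symmetry does not follow by the route you describe.

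The repair is simple and is what the paper does: no gauge transformation is needed. Multiplying $\Tal_\tau$ by $\Wal_\tau$ directly yields
\[
  \Wal_\tau\, \Tal_\tau\, y = \lp \Pal_\tau\, y'\rp' + \Ral_\tau\, y,
  \qquad \Pal_\tau = (1-z^2)^{\al+\frac12}\tau^{-2},
\]
where $\Ral_\tau$ is a \emph{multiplication} operator carrying exactly the zeroth-order data $(1-z^2)\tau_{zz}/\tau + (2\al-1)z\,\tau_z/\tau$ that your gauge argument was trying to make disappear. Symmetry with respect to $\Wal_\tau$ is then immediate, since the $\Ral_\tau$ terms cancel in the bilinear form $\int_I(y_1\Tal_\tau y_2 - y_2\Tal_\tau y_1)\Wal_\tau$, and one recovers precisely your final boundary term $\Pal_\tau\Wr[\pi_i,\pi_j]$. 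With that substitution, the rest of your argument (distinctness of the eigenvalues via Lemma~\ref{prop:deglamdai}, vanishing of $(1-z^2)^{\al+\frac12}$ at $\pm1$, boundedness of $\tau^{-2}\Wr[\pi_i,\pi_j]$ on $I$ by condition (a)) goes through and coincides with the paper's proof.
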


\begin{proof}
  Multiplying the
  eigenvalue equation $\Tal_\tau y = \lambda y$ by $\Wal_\tau$ yields
  a Sturm-Liouville eigenvalue equation
    \begin{equation}
      \label{eq:SLform}
       (\Pal_\tau  y')' + \Ral_\tau y =
       \Wal_\tau\lambda y,
     \end{equation}
     where
     \[
       \begin{aligned}
         \Pal&:= (1-z^2)^{\al+\frac12} \tau^{-2}\\
         \Ral&:=(1-z^2)^{\al+\frac12} \tau_{zz}\tau^{-3} + (2\al-1)
         z(1-z^2)^{\al-\frac12} 
         \tau_z\tau^{-3}
       \end{aligned} \]  Lagrange's identity now gives
     \begin{equation}
       \label{eq:lagid}
       \int \lp y_1 \Tal_\tau y_2 -
       y_2 \Tal_\tau y_1\rp  \Wal_\tau = \Pal_\tau \Wr[y_1,y_2].
     \end{equation}
      Since the eigenvalues $\lambda_i$ are distinct,
     assumption (a) and \eqref{eq:lagid} imply that the
     eigenpolynomials satisfiy orthogonality relations.
\end{proof}

The base case of the class of exceptional Gegenbauer operators is the
classical Gegenbauer operator $\Tal= \Tal_{\tau_0}$, where
$\tau_0(z)=1$.  In this case the general form \eqref{eq:GegOp}
simplifies to \eqref{eq:T0def} and    the
eigenpolynomials are the classical Gegenbauer polynomials \cite{AS64}
\begin{equation}
  \label{eq:gegpoly}
     \Cal_{i} := \sum_{k=0}^{\lfloor i/2 \rfloor} (-1)^k \frac{\Gamma(i-k+\al)}{\Gamma(\al) k! (i-2k)!} (2z)^{i-2k} .
\end{equation}
These classical orthogonal polynomials do have $\deg \Cal_i =i$, and they satisfy the eigenvalue relation
\begin{gather*}
  \Tal \Cal_i = \lambda_i \Cal_i,\quad i \in \Nz ,
\end{gather*}
with $\lambda_i=-i(2\alpha+i)$.  If $\al>-\frac 12$ the polynomials $\{\Cal_i\}_{i\in\Nz}$ form  
a complete set in $\rL^2(I, W^{(\alpha)})$ and they satisfy the orthogonality relation
\[   \int_{I} \Cal_{i}(u) \Cal_{j}(u) \Wal(u) du =
  \nual_{i} \delta_{ij}, \quad i,j \in \Nz \]
where
\begin{align}
  \label{eq:Wdef}
  \Wal(z) &:= \Wal_{\tau_0}(z)=(1-z^2)^{\al-\frac12};\\
  \label{eq:nuidef}
  \nual_i &:= \frac{\pi 2^{1-2\al} \Gamma(i+2\al)}{i!
    (i+\al) \Gamma(\al)^2} ,\quad i\in \Nz.
\end{align}

We can now apply the theory of CDTs to generate exceptional orthogonal
polynomials.

Once the class of exceptional Gegenbauer operators and polynomials has been defined, we will show that this class is not empty by describing in the next sections the construction of exceptional Gegenbauer polynomials of the second kind via a sequence of confluent Darboux transforamtions.

\subsection{Factorizations of exceptional Gegenbauer operators} \hfill

Let us start by describing the factorization of an $X$-Gegenbauer operator \eqref{eq:GegOp} in order to define a $1$-step Darboux transformation. After that, we will combine two Darboux transformations to define a CDT. 

Given rational functions
$\tau(z)$ and $\pi(z)$, and a real constant $\alpha$, we define the
following two first-order rational operators:
\begin{equation}
    \begin{aligned}
        \label{eq:AhA}
        A_{\tau\pi}(z,D_z) &:= \tau(z)^{-1} ( \pi(z) D_z - \pi'(z) ), \\
        \Bal_{\pi\tau}(z,D_z) &:= (1-z^2) A_{\pi\tau}(z,D_z) -
        (2\al+1)z\tau(z)\pi(z)^{-1}.
    \end{aligned}
\end{equation}
It will be useful to express the above
operators in terms of Wronskian determinants by setting
\begin{equation}
  \label{eq:hpihtau}
  \begin{aligned}
    \hpi(z) &:=  (1-z^2)^{-\alpha-\frac32}\pi(z),\\
    \htau(z) &:= (1-z^2)^{-\alpha-\frac12} \tau(z),
  \end{aligned}
\end{equation}
so that
\begin{align*}
  &A_{\tau\pi} y = \tau^{-1} \Wr[\pi,y], \\
  &\Bal_{\pi\tau} y = \hpi^{-1} \Wr[\htau,y].
\end{align*}
Observe that it would be sufficient to define only $A_{\tau\pi}$ since
the two operators are related by
\begin{gather}\label{eq:idbetweenAB}
    \Bal_{\pi\tau} = A_{\hpi\htau} .
\end{gather}
However, defining both operators separately will make for simpler notation going forward.
With these first order operators, we can now describe the factorization of an $X$-Gegenabuer operator \eqref{eq:GegOp} in the following proposition.
\begin{prop}
    \label{prop:formsd}
    Let $\alpha\in\R$, $\tau(z)$ be a polynomial,
    and $\Tal_\tau$ be an exceptional Gegenabuer operator as in \eqref{eq:GegOp}. Assume that  $\pi(z)$ is an eigenpolynomial
     of $\Tal_\tau$ with eigenvalue
    $\lambda$, i.e. $\Tal_\tau\pi=\lambda\pi$ . We then have the following factorizations:
    \begin{equation}
        \label{eq:sdfac}
        \begin{aligned}
          \Bal_{\pi\tau} A_{\tau\pi}
          &=\Tal_\tau- \lambda , \\
          A_{\tau\pi} \Bal_{\pi\tau} &=
          T^{(\alpha+1)}_\pi - \hlambda,
        \end{aligned}
      \end{equation}
      where
      \begin{equation}
        \label{eq:hlambda}
         \hlambda = \lambda+ 2\alpha + 1.
      \end{equation}
\end{prop}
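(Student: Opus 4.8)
The plan is to verify the two operator factorizations in \eqref{eq:sdfac} by direct computation, exploiting the Wronskian representations $A_{\tau\pi}y = \tau^{-1}\Wr[\pi,y]$ and $\Bal_{\pi\tau}y = \hpi^{-1}\Wr[\htau,y]$ together with the identity $\Bal_{\pi\tau} = A_{\hpi\htau}$ from \eqref{eq:idbetweenAB}. The strategy is to recognize this as an instance of the general Darboux factorization machinery from Section~\ref{sec:algDar}: the operator $\Tal_\tau$, after the substitution $y = \tau u$ or in the appropriate gauge, is conjugate to a Schr\"odinger-type operator, and $\pi$ (respectively $\htau$) plays the role of the seed eigenfunction. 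Concretely, I would first record that $\Tal_\tau$ in the form \eqref{eq:GegOp} is the gauge transform $\tau \circ \bigl((1-z^2)D_z^2 - (2\al+1)zD_z\bigr)\circ \tau^{-1}$ plus lower-order corrections — more precisely, unwinding \eqref{eq:GegOp} shows $\Tal_\tau(y) = (1-z^2)\tau\,(y/\tau)'' \cdot(\text{stuff})$; the cleanest route is to write everything through $\hpi,\htau$.

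For the first identity $\Bal_{\pi\tau}A_{\tau\pi} = \Tal_\tau - \lambda$: I would compute $\Bal_{\pi\tau}(A_{\tau\pi}y)$ by first applying $A_{\tau\pi}y = \tau^{-1}\Wr[\pi,y] = \pi\,(y/\pi)'\cdot(\pi/\tau)$, wait — more carefully $A_{\tau\pi}y = \tau^{-1}(\pi y' - \pi' y)$. Then $\Bal_{\pi\tau}$ applied to this, using $\Bal_{\pi\tau}v = (1-z^2)\pi^{-1}(\pi v' - \pi'v)\cdot(\text{via }A_{\pi\tau}) - (2\al+1)z\tau\pi^{-1}v$, yields a second-order operator in $y$ whose leading coefficient is $(1-z^2)$ (matching $\Tal_\tau$) and whose lower-order terms can be matched term-by-term against \eqref{eq:GegOp} after using the hypothesis $\Tal_\tau\pi = \lambda\pi$, i.e. \eqref{eq:Ttaupii}, to eliminate the $\pi''$-type terms. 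The eigenvalue relation is precisely what makes the zeroth-order coefficient collapse to $-\lambda$. For the second identity, I would use $A_{\tau\pi}\Bal_{\pi\tau} = A_{\tau\pi}A_{\hpi\htau}$ and observe by the same computation, now with roles $(\tau,\pi)\mapsto(\hpi,\htau)$ and $\al \mapsto$ shifted, that this equals $\Talo_\pi - \hlambda$ for the appropriate eigenvalue; here one needs that $\htau$ is an eigenfunction of $\Talo_\pi$ at eigenvalue $\hlambda$, which should follow from the first factorization applied in the reverse direction (the intertwining $A_{\tau\pi}\Tal_\tau = \Talo_\pi A_{\tau\pi}$) evaluated on $\pi$ — or alternatively checked directly.

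Alternatively, and perhaps more economically, I would derive the eigenvalue shift \eqref{eq:hlambda} abstractly: from the two factorizations, $\Talo_\pi A_{\tau\pi} = A_{\tau\pi}\Tal_\tau$ and the two operators $\Tal_\tau - \lambda = \Bal_{\pi\tau}A_{\tau\pi}$, $\Talo_\pi - \hlambda = A_{\tau\pi}\Bal_{\pi\tau}$ are the two orderings of the same factorization, so $\hlambda - \lambda$ is determined by comparing a single coefficient — the simplest being the zeroth-order term evaluated via the leading behavior, or by noting $\Wr$-degree bookkeeping: the constant $2\al+1$ is exactly the difference $\lambda_i(\al+1) - \lambda_i(\al)$ read off from $\lambda_i = -d(2\al+d)$, since passing $\tau\to\pi$, $\al\to\al+1$ shifts $d\mapsto d-1$ and one computes $-(d-1)(2\al+2+d-1) - (-d(2\al+d)) = 2\al+1$. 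The main obstacle I anticipate is purely bookkeeping: keeping the gauge factors $(1-z^2)^{\pm(\al+1/2)}$ and $(1-z^2)^{\pm(\al+3/2)}$ straight when moving between the $A_{\tau\pi}$ form and the Wronskian form of $\Bal_{\pi\tau}$, and making sure the half-integer shift in $\al$ is tracked correctly through \eqref{eq:hpihtau}. There is no conceptual difficulty — the identities are forced once one commits to the definitions — so the proof should reduce to a clean, if slightly lengthy, verification, and the cleanest writeup will likely just state "by a direct calculation using \eqref{eq:AhA}, \eqref{eq:hpihtau}, \eqref{eq:idbetweenAB} and the eigenvalue equation \eqref{eq:Ttaupii}" with the coefficient comparison sketched.
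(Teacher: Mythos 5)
Your plan — verify both factorizations by composing the first-order operators in \eqref{eq:AhA}, matching the second-order coefficients against \eqref{eq:GegOp}, and using the eigenvalue equation \eqref{eq:Ttaupii} to collapse the zeroth-order term — is exactly the paper's proof, which is stated simply as ``a straightforward computation,'' and your bookkeeping of the gauge factors and of the shift $\hlambda-\lambda=2\al+1$ is consistent with what that computation yields. One caveat: your suggestion to obtain $T^{(\al+1)}_\pi\htau=\hlambda\htau$ by evaluating the intertwining relation on $\pi$ is both circular (the intertwining presupposes the factorizations) and vacuous (since $A_{\tau\pi}\pi=0$ it only gives $0=0$), so you must take your stated fallback of checking it directly, which is precisely the identity $\pi\, T^{(\al+1)}_\pi\htau=\htau\, \Tal_\tau\pi+(2\al+1)\pi\htau$ that the paper records in the proof of the companion Proposition~\ref{prop:formsa}.
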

\noindent
\begin{proof}
  The proof follows by a straightforward computation.
\end{proof}

We say that the transformation
$\Tal_\tau \to T^{(\al+1)}_{\pi}-(2\alpha+1)$ is a formally
\emph{state-deleting} Darboux transformation, because the second
operator no longer has an eigenpolynomial at the eigenvalue $\lambda$.
Likewise, we refer to the transformation
$T^{(\alpha+1)}_\pi \to \Tal_\tau{+(2\alpha+1)}$ as a formally \emph{state-adding}
Darboux transformation, because the operator $\Tal_\tau$ gains an extra
polynomial eigenfunction $\pi$ at eigenvalue $\lambda$.  The following
proposition describes the factorization eigenfunction for this dual
factorization.
\begin{prop}
  \label{prop:formsa}
  Let $\alpha\in\mathbb{R}$, $\tau(z)$ and $\pi(z)$ be non-zero polynomials and let $\htau$ be as
  in \eqref{eq:hpihtau}.
  Suppose that the following eigenvalue equation holds:
  $T^{(\alpha+1)}_\pi \htau = \hlambda\htau$.  Then  the factorizations \eqref{eq:sdfac}
   also hold.
\end{prop}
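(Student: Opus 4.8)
The plan is to reduce the assertion to Proposition~\ref{prop:formsd}. Write $\lambda:=\hlambda-(2\al+1)$, so that the relation \eqref{eq:hlambda} between $\lambda$ and $\hlambda$ holds by construction; it then suffices to prove that the hypothesis $\Talo_\pi\htau=\hlambda\htau$ forces $\Tal_\tau\pi=\lambda\pi$, because Proposition~\ref{prop:formsd} then applies and delivers the two factorizations \eqref{eq:sdfac}.

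First I would collect two elementary facts, valid for \emph{arbitrary} nonzero polynomials $\tau,\pi$ and every $\al\in\R$. (i)~From the Wronskian forms of the intertwiners, $A_{\tau\pi}\pi=\tau^{-1}\Wr[\pi,\pi]=0$ and, via \eqref{eq:idbetweenAB}, $\Bal_{\pi\tau}\htau=A_{\hpi\htau}\htau=\hpi^{-1}\Wr[\htau,\htau]=0$. (ii)~A short computation comparing the coefficients of $D^{2}$ and of $D$ shows that $\Bal_{\pi\tau}A_{\tau\pi}$ is a second-order operator sharing its leading coefficient $1-z^{2}$ and its first-order coefficient with $\Tal_\tau$, and that $A_{\tau\pi}\Bal_{\pi\tau}$ shares its $D^{2}$- and $D$-coefficients with $\Talo_\pi$. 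Consequently $\Bal_{\pi\tau}A_{\tau\pi}=\Tal_\tau+g$ and $A_{\tau\pi}\Bal_{\pi\tau}=\Talo_\pi+\hat g$ for rational functions $g,\hat g$; evaluating these identities on $\pi$ and on $\htau$ respectively and using (i) identifies $g=-\pi^{-1}\Tal_\tau\pi$ and $\hat g=-\htau^{-1}\Talo_\pi\htau$.

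The crux is the identity
\[
\htau^{-1}\,\Talo_\pi\htau \;=\; \pi^{-1}\,\Tal_\tau\pi \;+\; (2\al+1),
\]
again valid for all nonzero polynomials $\tau,\pi$. I would verify it by a direct calculation organized around the logarithmic derivatives $u=\pi'/\pi$ and $v=\tau'/\tau$: from \eqref{eq:GegOp} one gets $\pi^{-1}\Tal_\tau\pi=(1-z^{2})\big((u-v)^{2}+u'+v'\big)-(2\al+1)zu+(2\al-1)zv$, whereas $\htau^{-1}\Talo_\pi\htau$ — computed using that $\htau$ has logarithmic derivative $\tfrac{(2\al+1)z}{1-z^{2}}+v$ and that $\Talo_\pi$ is \eqref{eq:GegOp} with $\al\mapsto\al+1$ and $\tau\mapsto\pi$ — reproduces exactly the block $(1-z^{2})\big((u-v)^{2}+u'+v'\big)$, together with $zu$- and $zv$-terms that cancel and a purely $z$-rational remainder that collapses to the constant $2\al+1$.

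Granting these, the proof closes at once: the hypothesis says the left-hand side of the displayed identity is the constant $\hlambda$, hence $\pi^{-1}\Tal_\tau\pi=\hlambda-(2\al+1)=\lambda$ is constant too, i.e.\ $\Tal_\tau\pi=\lambda\pi$, and Proposition~\ref{prop:formsd} gives \eqref{eq:sdfac}. (One can also finish without invoking Proposition~\ref{prop:formsd}: since $g=-\lambda$ and $\hat g=-\hlambda$ are now constant, (ii) already reads $\Bal_{\pi\tau}A_{\tau\pi}=\Tal_\tau-\lambda$ and $A_{\tau\pi}\Bal_{\pi\tau}=\Talo_\pi-\hlambda$.) I expect the only real obstacle to be the coefficient bookkeeping in (ii) and in the displayed identity: the arithmetic is lengthy, but once everything is expressed through $u=\pi'/\pi$ and $v=\tau'/\tau$ the cancellations are transparent, and it is precisely the ``straightforward computation'' invoked in the proof of Proposition~\ref{prop:formsd}.
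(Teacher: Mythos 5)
Your proof is correct and follows essentially the same route as the paper: the displayed identity $\htau^{-1}\Talo_\pi\htau=\pi^{-1}\Tal_\tau\pi+(2\al+1)$ is exactly the paper's key relation \eqref{eq:piThtau} divided by $\pi\htau$, from which the hypothesis is seen to be equivalent to $\Tal_\tau\pi=\lambda\pi$ and Proposition~\ref{prop:formsd} (via \eqref{eq:idbetweenAB}) finishes the argument. The extra bookkeeping in your step (ii) is a harmless re-derivation of what Proposition~\ref{prop:formsd} already provides.
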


\begin{proof}
   By direct calculation, we obtain that
 \begin{equation}
   \label{eq:piThtau}
   \pi T_\pi^{(\alpha+1)} \htau=\htau T_\tau^{(\alpha)} \pi +(2\alpha+1)\pi\htau.
  \end{equation}
Hence, $\Tal_\tau \pi = \lambda \pi$ is equivalent to
  $T^{(\alpha+1)}_\pi \htau = \hlambda \htau$. 
  This, together with \eqref{eq:idbetweenAB}, implies the result. 
%

\end{proof}

\subsection{Confluent Darboux transformations of exceptional Gegenbauer operators} \hfill
\label{subsec:CDTGeg}

In this section, we apply the theory of confluent Darboux transformations developed in Section~\ref{sec:CDT} to exceptional Gegenbauer operators. With the factorizations introduced in the previous section, we will realize a CDT of an exceptional
Gegenbauer operator $T_0$ by a state-deleting
transformation $T_0 \to T_1$ followed by a 1-parameter family of
state-adding transformations $T_1\to T_2$ at the same eigenvalue. Next we derive certain recursive formulas that connect the $\tau$-functions and eigenpolynomials of two $X$-Gegenbauer operators connected by a CDT.

Starting from the classical Gegenbauer operator we will construct a chain of CDTs and a recursive construction of exceptional Gegenbauer operators of second kind.
This recursive construction will be discussed in more detail in the
following section.


\begin{remark}
While the definitions of X-Gegenbauer operators and their factorizations in the previous Section hold for any $\alpha\in\R$, we need to assume that certain integrals like \eqref{eq:rhoij} define rational functions, which requires that $\alpha\in \Nz+\frac12$ from here on.
\end{remark}

Suppose moreover  that $\tau(z)$ is a polynomial and $\Tal_\tau$ is an exceptional
Gegenbauer operator, as per \eqref{eq:GegOp}, with eigenpolynomials
$\{ \pi_i \}_{i \in \N_0}$. We define the functions
\begin{align}
    \label{eq:rhoij}
  &\rho_{ij}(z) := \int_{-1}^z  \pi_{i}(u) \pi_{j}(u)
    \Wal_\tau(u) du , \quad i,j\in \Nz;\\  
    \label{eq:taum}
  &\tau_m(z,t) := \tau(z) ( 1 + t \rho_{mm}(z) ) , \quad m\in \Nz;\\
    \label{eq:pimi}
    &\pi_{m;i}(z,t) := ( 1 + t \rho_{mm}(z) ) \pi_i(z) - t
      \rho_{im}(z) \pi_m(z) ,   \quad i,m\in \Nz,
\end{align}
where the integral in \eqref{eq:rhoij} denotes a formal
anti-derivative that vanishes at $z=-1$.
\begin{remark}
Throughout this Section, we  assume that $\tau_m(z)$ and $\pi_{m;i}(z,t)$ are
polynomials in $z$ and $\rho_{i j}(z)$ is a
rational function of $z$. In principle, this assumption seems a strong requirement when looking at \eqref{eq:rhoij} and \eqref{eq:Wtaldef}. However, we will see in Section~\ref{sec:xgegen} that whenever these quantities are connected recursively to the classical Gegenbauer operator and polynomials, there exist matrix formulas that establish the polynomial character of $\tau_m(z)$ and $\pi_{m;i}(z,t)$ and the rational character of  $\rho_{i j}(z)$ by construction.
\end{remark}


\begin{prop}
    \label{prop:CDT}
    For a given $m \in \Nz$, the operators $\Tal_\tau, \Tal_{\tau_m}$
    are related by a confluent Darboux transformation generated by the
    seed functions $\{\pi_m,\tor{\pi_m^{(1)}\}}$.
\end{prop}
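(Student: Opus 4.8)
The plan is to exhibit $\Tal_\tau \to \Tal_{\tau_m}$ as the composition of the two factorization steps of Section~\ref{sec:CDT}, specialized to the Gegenbauer setting, with seed eigenfunction $\phi = \pi_m$ at eigenvalue $\lambda = \lambda_m$. First I would set $T_0 := \Tal_\tau$ and, using Proposition~\ref{prop:formsd}, perform the state-deleting step $T_0 \to T_1 := T^{(\alpha+1)}_{\pi_m} - (2\alpha+1)$ with intertwiner $A_1 = A_{\tau\pi_m}$ (up to the factorization gauge $b_1$). This is a genuine $1$-step rational Darboux transformation at factorization eigenvalue $\lambda_m$, so the hypotheses of Section~\ref{sec:CDT} are met. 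Then I would produce the second seed function $\phi^{(1)} = \pi_m^{(1)}$ as the generalized eigenfunction solving $(T_0 - \lambda_m)\pi_m^{(1)} = \pi_m$, which by Lemma~\ref{lem:geneigen} has the explicit form
\[
\pi_m^{(1)}(z) = \pi_m(z)\int^z \frac{\rho(u)\,\mu(u)}{\pi_m(u)^2}\,du,
\]
where here $\mu(z) = (1-z^2)^{\alpha+\frac12}$ (the Gegenbauer instance of \eqref{eq:mudef}), $W = (p\mu)^{-1} = (1-z^2)^{-\alpha-\frac12}\cdot(1-z^2)^{-1}$ matches $\Wal_\tau$ up to the $\tau^{-2}$ factor, and $\rho(z) = \int^z \pi_m^2 W = \rho_{mm}(z)$ after accounting for the $\tau^{-2}$ in $\Wal_\tau$ and fixing the lower limit at $z=-1$ as in \eqref{eq:rhoij}. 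This is the point where the definitions \eqref{eq:rhoij}--\eqref{eq:pimi} get matched to the abstract CDT data: one checks that $\mu = \tau^2 \Wal_\tau$ up to constants, so that $\rho_{mm}$ is exactly the function $\rho$ of Lemma~\ref{lem:geneigen}.

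Next I would invoke the Proposition at the end of Section~\ref{sec:CDT} (the one summarizing the CDT construction): with $\upsilon_1 = (\log \pi_m)'$ — more precisely $(\log \hpi_m)'$ after the Gegenbauer gauge shift, since $\phi$ here should be the Jacobi-gauge eigenfunction — and $\upsilon_2 = (\log(\rho_{mm}\mu))'$, the operator $T_2 := pD^2 + q_2 D + r_2$ defined by \eqref{eq:qkrk} is connected to $T_0$ by a CDT, provided $\rho_{mm}$ is quasi-rational. That quasi-rationality is exactly the standing assumption of the section (the Remark after \eqref{eq:pimi}), so I may assume it. It then remains to identify this $T_2$ with $\Tal_{\tau_m}$. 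For this I would compute the Wronskian $\phi_{(1,2)} = \Wr[\pi_m, \pi_m^{(1)}]$ in the appropriate gauge: as shown in the derivation of \eqref{eq:psi2def}, $\Wr[\phi, \phi^{(1)}] = \rho\mu$, so up to the Jacobi-gauge prefactor $(1-z^2)^{-\alpha-\frac12}$ one gets $\Wr[\hpi_m, \hpi_m^{(1)}]$ proportional to $\rho_{mm}(z)$ times a power of $(1-z^2)$, whence the new $\tau$-function is $\tau(z)(1+t\rho_{mm}(z))$ after reincorporating the original $\tau$ and the free constant $t$ from the lower limit of the inner integral (the Remark on the deformation parameter). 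Matching the coefficients $q_2, r_2$ from \eqref{eq:qkrk} against the coefficients of $\Tal_{\tau_m}$ read off from \eqref{eq:GegOp} with $\tau \mapsto \tau_m$ is then a direct, if slightly lengthy, computation using $\upsilon_2 = (\log \tau_m)' + (\text{gauge terms})$.

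The main obstacle I anticipate is bookkeeping the gauge factors correctly: the abstract theory of Section~\ref{sec:CDT} is stated for a bare operator $T_0 = pD^2 + q_0 D + r_0$ and quasi-rational $\phi$, whereas $\Tal_\tau$ carries the $\tau$-dependent gauge built into \eqref{eq:GegOp}, and the factorizations of Proposition~\ref{prop:formsd} shift $\alpha \to \alpha+1$ and conjugate by powers of $(1-z^2)$ via \eqref{eq:hpihtau}. One must be careful that the seed function fed into the CDT machinery is $\hpi_m = (1-z^2)^{-\alpha-\frac32}\pi_m$ (or the analogous Jacobi-gauge object), that $W$ in \eqref{eq:Wpmu} reproduces $\Wal_\tau$ up to the $\tau^{-2}$, and that the integration constant in $\rho$ is pinned down to give the clean formula \eqref{eq:rhoij} vanishing at $z=-1$. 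Once the dictionary $\phi \leftrightarrow \pi_m$, $\rho \leftrightarrow \rho_{mm}$, $\mu \leftrightarrow \tau^2\Wal_\tau$, and the free constant $\leftrightarrow t$ is set up correctly, the remaining verification that $T_2 = \Tal_{\tau_m}$ is routine. The fact that $\pi_{m;i}$ as in \eqref{eq:pimi} are the transformed eigenpolynomials — i.e. that $\pi_{m;i} \propto \Wr[\pi_m, \pi_m^{(1)}, \pi_i]/\Wr[\pi_m,\pi_m^{(1)}]$ in the right gauge — follows from the generalized Crum formula (Theorem~\ref{thm:fachain}, $k=3$ with the third seed a true eigenfunction) and can be recorded as a corollary, though strictly it is not needed for the statement of Proposition~\ref{prop:CDT} itself.
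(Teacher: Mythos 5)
Your overall architecture is the right one and matches the paper's: a state-deleting step $T_0\to T_1 = T^{(\al+1)}_{\pi_m}-(2\al+1)$ via Proposition~\ref{prop:formsd}, followed by a second step at the same eigenvalue $\lambda_m$ seeded by the generalized eigenfunction $\pi_m^{(1)}$; you also correctly note that the statement about the transformed eigenpolynomials $\pi_{m;i}$ is not needed for this proposition. However, there are two concrete problems. First, the gauge bookkeeping is wrong. From \eqref{eq:mudef} with $p=1-z^2$ and $q_0=-2(1-z^2)\tau_z/\tau-(2\al+1)z$ one gets $\mu=(1-z^2)^{-\al-\frac12}\tau^2$, not $(1-z^2)^{\al+\frac12}$, and then $W=(p\mu)^{-1}=(1-z^2)^{\al-\frac12}\tau^{-2}=\Wal_\tau$ \emph{exactly} --- the $\tau^{-2}$ is already there, and your proposed dictionary $\mu=\tau^2\Wal_\tau$ is false, since $\tau^2\Wal_\tau=(1-z^2)^{\al-\frac12}$ while $\mu=(1-z^2)^{-\al-\frac12}\tau^2$. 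Relatedly, the seed fed to the CDT machinery is the polynomial $\pi_m$ itself, in the algebraic gauge of $\Tal_\tau$; the hatted objects \eqref{eq:hpihtau} enter only through the Wronskian representation of $\Bal_{\pi\tau}$, not as the seed. With your values of $\mu$ and $W$ the identification $\rho=\rho_{mm}$ does not hold and the computation you outline would not close.

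Second, the crux of the proposition --- that the second step lands exactly on $\Tal_{\tau_m}$ --- is left as an unexecuted ``direct, if slightly lengthy'' coefficient match against \eqref{eq:qkrk}. That route is viable in principle, but it is precisely the step most likely to fail given the errors above, and the paper avoids it entirely. The shortcut you are missing is this: a one-line Wronskian computation gives $A_1\pi_m^{(1)}=\tau^{-1}\Wr[\pi_m,\pi_m^{(1)}]=(1+t\rho_{mm})\,\mu/\tau=\htau_m$, where $\htau_m=(1-z^2)^{-\al-\frac12}\tau_m$; the intertwining $A_1T_0=T_1A_1$ then gives $T_1\htau_m=\lambda_m\htau_m$, i.e.\ $T^{(\al+1)}_{\pi_m}\htau_m=(\lambda_m+2\al+1)\htau_m$; and Proposition~\ref{prop:formsa}, applied with $\pi\mapsto\pi_m$ and $\tau\mapsto\tau_m$, immediately yields the factorization $\Bal_{\pi_m\tau_m}A_{\tau_m\pi_m}=\Tal_{\tau_m}-\lambda_m$, so the state-adding step produces $\Tal_{\tau_m}$ with no coefficient matching at all. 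You should correct the formula for $\mu$ and restructure the second half of your argument around this observation.
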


\begin{proof}
  Set $T_0 = \Tal_\tau$, and recall that, by assumption,
  $T_0 \pi_m = \lambda_m \pi_m$. We set
    \begin{gather*}
        T_1 := T^{(\al+1)}_{\pi_m} - (2\al + 1),\quad T_2 := \Tal_{\tau_m},\\
        A_1 := A_{\tau,\pi_m},\quad A_2 := \Bal_{\pi_m, \tau_m}
    \end{gather*}
    By Proposition ~\ref{prop:formsd}, $T_0\to T_1$ is a state-deleting
    Darboux transformation. Consequently, 
    \begin{equation}
        \label{eq:A1T01}
        A_1 T_0 = T_1 A_1.
    \end{equation}

    We now claim that the transformation $T_1\to T_2$ is a 1-parameter
    family of state-adding Darboux transformations. By inspection of
    \eqref{eq:GegOp}, we find that the relevant coefficient functions
    of $T_0$ are
    \begin{gather*}
        p := 1-z^2,\quad q_0 := -2 (1-z^2) \frac{\tau_z}{\tau} - (2\al+1)z .
    \end{gather*}
   Then the function $\mu(z)$ in \eqref{eq:mudef} satisfies
    \begin{gather*}
        \frac{\mu'}{\mu} = -\frac{q_0}{p} = 2\frac{\tau_z}{\tau} +
        \frac{(2\al+1)z}{1-z^2} , 
    \end{gather*}
    and hence
    \begin{gather*}
        \mu(z) :=(1-z^2)^{-\al-\frac12} \tau(z)^2 .
    \end{gather*}

    Following \eqref{eq:phi2int}, we construct a generalized
    eigenfunction of $T_0$ as shown in Section ~\ref{sec:CDT}. We
    define
    \begin{align}
        \label{eq:pim1def}
        &\pi_m^{(1)}(z;t) := \pi_m(z) \int_{-1}^z (1+t\rho_{mm}(s)) \htau(s)^2 W_{\pi_m}^{(\al+1)}(s) ds,
        \intertext{where} \nonumber
        &\htau(z) := (1-z^2)^{-\al-1/2}\tau(z)
    \end{align}
    and $W_{\pi_m}^{(\alpha+1)}$ is given in \eqref{eq:Wtaldef}.
    Observe that $\Wal_\tau = (p\mu)^{-1}$, in agreement with
    \eqref{eq:Wpmu}. 
    The definitions of
    $\rho_{mm},\pi_m^{(1)}, \htau$ agree with the definitions of
    $\rho,\phi^{(1)},\psi_1^\perp$ in \eqref{eq:rhodef}, \eqref{eq:phi2int} and
    \eqref{eq:psi1perp}, respectively.  Consequently, as shown in Section ~\ref{sec:CDT}, $\pi_m^{(1)}$ is a
    first-order generalized eigenfunction of $T_0$, because it
    satisfies the equation
    \begin{gather}
        (T_0 - \lambda_m) \pi_m^{(1)}  = t \pi_m .
    \end{gather}

    
    A direct calculation shows that
    \begin{gather*}
        A_1 \pi_m^{(1)} = \htau_m.
    \end{gather*}
    where
    \begin{equation}
        \label{eq:tautdef}
      \htau_m(z,t) := (1-z^2)^{-\al-1/2} \tau_m(z,t) =\htau(z)(1 + t
       \rho_{mm}(z)). 
    \end{equation}
    Hence, by \eqref{eq:A1T01},
    \begin{gather*}
      T_1 \htau_m = T_1 A_1 \pi^{(1)}_m = A_1 T_0 \pi^{(1)}_m =
      \lambda_m \htau_m;
    \end{gather*}
    i.e., $\htau_m$ is a factorization eigenfunction of $T_1$ at
    $\lambda_m$. By Proposition ~\ref{prop:formsa}, we have thus
    \[ \begin{aligned}
        \Bal_{\tau_m, \pi} A_{\pi, \tau_m}  &= \Tal_{\tau_m} - \lambda\\
        A_{\pi, \tau_m} \Bal_{\tau_m, \pi}&= T^{(\alpha+1)}_{\pi}
        -2\alpha-1 -\lambda.
      \end{aligned}
    \]
    We conclude therefore that $T_2 A_2 = A_2 T_1$, as was to be shown.
  \end{proof}

  \begin{prop}
    \label{prop:eigenpimi}
    The polynomials $\pi_{m,i}(z;t)$  as defined in \eqref{eq:pimi}
    are eigenfunctions of $\Tal_{\tau_m}$.
  \end{prop}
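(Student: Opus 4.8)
The plan is to exploit the intertwining operators produced in the proof of Proposition~\ref{prop:CDT}. In the notation there, put $T_0=\Tal_\tau$, $T_2=\Tal_{\tau_m}$, $A_1=A_{\tau,\pi_m}$, $A_2=\Bal_{\pi_m,\tau_m}$; since $A_1T_0=T_1A_1$ and $A_2T_1=T_2A_2$, the composite $A_2A_1$ intertwines $T_0$ with $\Tal_{\tau_m}$, so that $\Tal_{\tau_m}(A_2A_1\pi_i)=\lambda_i\,(A_2A_1\pi_i)$ for every $i\in\Nz$. It is therefore enough to check that, for $i\ne m$, $A_2A_1\pi_i$ equals a nonzero constant times $\pi_{m;i}$. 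The case $i=m$ is separate: there the $\rho_{mm}$ terms in \eqref{eq:pimi} cancel, so $\pi_{m;m}=\pi_m$, and the identity $\Tal_{\tau_m}\pi_m=\lambda_m\pi_m$ is already obtained inside the proof of Proposition~\ref{prop:CDT} through Proposition~\ref{prop:formsa}; equivalently it follows from the factorization $\Tal_{\tau_m}=\Bal_{\pi_m,\tau_m}A_{\tau_m,\pi_m}+\lambda_m$ (valid there) together with $A_{\tau_m,\pi_m}\pi_m=\tau_m^{-1}\Wr[\pi_m,\pi_m]=0$.

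For $i\ne m$ I would compute $A_2A_1\pi_i$ using the Wronskian forms of the intertwiners (see \eqref{eq:AhA} and \eqref{eq:hpihtau}): $A_1y=\tau^{-1}\Wr[\pi_m,y]$ and $A_2y=\hpi_m^{-1}\Wr[\htau_m,y]$, where $\hpi_m=(1-z^2)^{-\al-\frac32}\pi_m$ and $\htau_m=(1-z^2)^{-\al-\frac12}\tau_m$ as in \eqref{eq:tautdef}. Two ingredients feed into this. First, $\tau\htau_m=(1+t\rho_{mm})\mu$, which is immediate from \eqref{eq:taum} and $\mu=(1-z^2)^{-\al-\frac12}\tau^2$ (the coefficient datum of $T_0$ computed in the proof of Proposition~\ref{prop:CDT}, where also $\Wal_\tau=(p\mu)^{-1}$ with $p=1-z^2$). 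Second, $\Wr[\pi_m,\pi_i]=(\lambda_i-\lambda_m)\rho_{im}\,\mu$: this comes from Lagrange's identity $\pi_iT_0\pi_m-\pi_mT_0\pi_i=(\lambda_m-\lambda_i)\pi_m\pi_i$, which (using $\mu'/\mu=-q_0/p$) rearranges to $\frac{d}{dz}\big(\Wr[\pi_m,\pi_i]/\mu\big)=(\lambda_i-\lambda_m)\pi_m\pi_i\Wal_\tau=(\lambda_i-\lambda_m)\rho_{im}'$, after which one integrates and uses that $\Wr[\pi_m,\pi_i]/\mu=(1-z^2)^{\al+\frac12}\Wr[\pi_m,\pi_i]\tau^{-2}$ and $\rho_{im}$ both vanish at $z=-1$ --- which is where $\al\in\Nz+\frac12$ and $\tau(-1)\ne0$ (so that $\rho_{ij}$ is rational and the indicated function genuinely vanishes at $-1$) come in. Now, substituting and using the bilinearity rules $\Wr[\tau^{-1}f,\tau^{-1}g]=\tau^{-2}\Wr[f,g]$ and $\Wr[f\mu,g\mu]=\mu^2\Wr[f,g]$, one finds $A_2A_1\pi_i=\hpi_m^{-1}\tau^{-2}(\lambda_i-\lambda_m)\mu^2\,\Wr[1+t\rho_{mm},\rho_{im}]$; expanding the last Wronskian with $\rho_{mm}'=\pi_m^2\Wal_\tau$ and $\rho_{im}'=\pi_i\pi_m\Wal_\tau$ gives $\Wr[1+t\rho_{mm},\rho_{im}]=\pi_m\Wal_\tau\big((1+t\rho_{mm})\pi_i-t\rho_{im}\pi_m\big)=\pi_m\Wal_\tau\,\pi_{m;i}$ by \eqref{eq:pimi}. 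Finally $\mu^2\Wal_\tau=(1-z^2)^{-\al-\frac32}\tau^2$ and $\hpi_m^{-1}\pi_m=(1-z^2)^{\al+\frac32}$ cancel all remaining powers of $1-z^2$ and $\tau$, leaving $A_2A_1\pi_i=(\lambda_i-\lambda_m)\pi_{m;i}$; since the eigenvalues are distinct, $\lambda_i\ne\lambda_m$, and we conclude $\Tal_{\tau_m}\pi_{m;i}=\lambda_i\pi_{m;i}$.

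I do not expect a conceptual obstacle --- the argument is a chain of standard Wronskian manipulations glued to the intertwining relation. The two points that need care are the bookkeeping of which gauge exponent ($\al$ or $\al+1$) is attached to each of $A_1,A_2,T_0,T_1,T_2$, and the justification that $\Wr[\pi_m,\pi_i]/\mu$ and $\rho_{im}$ vanish at $z=-1$ so that integrating Lagrange's identity introduces no constant of integration; the latter is the step where the hypothesis $\al\in\Nz+\frac12$ is really used.
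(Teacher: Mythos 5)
Your proposal is correct and follows essentially the same route as the paper: both establish the composite intertwining relation $A_{21}T_0=T_2A_{21}$, prove the key identity $A_2A_1\pi_i=(\lambda_i-\lambda_m)\pi_{m;i}$ via the Wronskian forms of the intertwiners together with Lagrange's identity $\Wr[\pi_m,\pi_i]=(\lambda_i-\lambda_m)\mu\rho_{mi}$, and dispose of the case $i=m$ through the factorization $\Tal_{\tau_m}=\Bal_{\pi_m,\tau_m}A_{\tau_m,\pi_m}+\lambda_m$. The only (harmless) divergence is computational: the paper splits $A_2=B_1+t\,\hpi_m^{-1}\Wr[\htau\rho_{mm},\cdot\,]$ and invokes $T_0=B_1A_1+\lambda_m$ to absorb the $t$-independent part, whereas you evaluate the full Wronskian $\Wr[1+t\rho_{mm},\rho_{im}]$ directly; both yield the same cancellation of the powers of $1-z^2$ and $\tau$.
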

\begin{proof}  

  Combining the intertwining relations in the preceding proof, we have
  the second-order intertwining relation
    \begin{equation}
        \label{eq:A21T0T2}
        A_{21} T_0 = T_2 A_{21},\quad \text{where } A_{21}:= A_2 A_1.
    \end{equation}
    Our first claim is that
    \begin{equation}
      \label{eq:A21pii}
        A_{21} \pi_i = (\lambda_i-\lambda_m) \pi_{m,i},\quad i\neq m.
    \end{equation}

    Let $i \in \Nz$ be given. To establish the previous equation, we consider $A_1$ and $A_2$ in terms of their Wronskian formulations:
    \begin{align*}
        A_1y & = A_{\tau\pi_m}y = \tau^{-1}\Wr[\pi_m,y] ,\\
        A_2y &= \hA^{(\al)}_{\pi_m\tau_m}y = \hpi_m^{-1}\Wr[\htau_m,y] ,
        \intertext{where}
        \hpi_m(z) &= (1-z^2)^{-\al-\frac32} \pi_m(z).
    \end{align*}
    Lagrange's identity \eqref{eq:lagid}  implies that
    \begin{gather*}
        \Wr[\pi_m,\pi_i]=(\lambda_i-\lambda_m)\mu\, \rho_{mi}.
    \end{gather*}
    Hence,
    \begin{gather*}
        A_1 \pi_i = \tau^{-1} \Wr[ \pi_m, \pi_i ] = (\lambda_i - \lambda_m) \htau \rho_{mi} .
    \end{gather*}
    We define $B_1 := \Bal_{\pi_m, \tau}$ and note that by
    Proposition \ref{prop:formsd} we have the factorization
    $T_0 = B_1 A_1 + \lambda_m$. By the linearity of the Wronskian, we
    also have
    \begin{align*}
        A_2 y &= \hpi_m^{-1} \Wr[ \htau_m, y ] = \hpi_m^{-1} \Wr[ \htau (1+t\rho_{mm}), y ] \\
        &= \hpi_m^{-1} \Wr[ \htau, y ] + \hpi_m^{-1} \Wr[ t \htau \rho_{mm}, y ] \\
        &= B_1 y + t \hpi_m^{-1} \Wr[ \htau \rho_{mm}, y ]
    \end{align*}
    Finally, we have that
    \begin{align*}
      A_{21} \pi_i
      &= B_1 A_1 \pi_i +t (\lambda_i-\lambda_m)
        \hpi_m^{-1} \Wr[\htau \rho_{mm}, \htau
        \rho_{mi}]\\ 
      &=(T_0-\lambda_m)\pi_i + (\lambda_i-\lambda_m)t \htau^2
        \hpi_m^{-1} \Wr[\rho_{mm},\rho_{mi}] \\ 
      &= (\lambda_i-\lambda_m)(\pi_i + t\htau^2\hpi_m^{-1} (\rho_{mm}
        \pi_m \pi_i {W^{(\alpha)}_\tau} - \rho_{mi} \pi_m^2
        {W^{(\alpha)}_\tau})\\ 
      &= (\lambda_i-\lambda_m)(\pi_i + t(\rho_{mm} \pi_i - \rho_{mi}
        \pi_{{m}})). 
    \end{align*}
    This establishes \eqref{eq:A21pii}.

    Next suppose that $i\ne m$.  Then, by \eqref{eq:A21T0T2}, it
    follows that
    \begin{gather}
        \label{eq:T2pimi}
        T_2 \pi_{m;i} = (\lambda_i-\lambda_m)^{-1} T_2 A_{21} \pi_i
         = (\lambda_i-\lambda_m)^{-1}  A_{21} T_0 \pi_i
        \lambda_i \pi_{m;i}. 
      \end{gather}
      Finally, observe that $\pi_{m,m} = \pi_m$ and recall that
      \[ T_2 = \Bal_{\pi_m,\tau_m} A_{\tau_m, \pi_m} + \lambda_m.\]
      Hence, $T_2 \pi_m = \lambda_m \pi_m$, as was to be shown.
\end{proof}


The last proposition of this section shows that the CDT of an
exceptional Gegenbauer family falls into the same class provided we
impose suitable bounds on the introduced parameter $t$. The form of
the norming constants of the new eigenpolynomials relative to the
weight $\Wal_{\tau_m}$ follow as a direct corollary to this
proposition. We denote the norm of $\pi_i$ by $\nu_i$, and similarly
the norm of $\pi_{m;i}$ is $\nu_{m;i}$. Explicitly, we have
\begin{align}
  \label{eq:nui}
  \nu_i
  &:= \int_{-1}^1 \pi_{i}(s)^2 \Wal_{\tau}(s) ds = \rho_{ii}(1) ,
    \quad i\in \Nz;\\
  \label{eq:numi}
  \nu_{m;i}
  &:= \int_{-1}^1 \pi_{m;i}(s)^2 \Wal_{\tau_m}(s) ds  =
    \rho_{m;ii}(1), \quad m,i \in \Nz;\intertext{where}
    \label{eq:rhomij}
    \rho_{m;ij}(z)
  &:=   \int_{-1}^z \pi_{m; i}(u; t)
    \pi_{m; j}(u; t)\Wal_{\tau_m}(u) du   ,\quad m,i,j\in \Nz.
\end{align}
\noindent
The following proposition exhibits a recursive formulation for the
functions $\rho_{m;ij}$.  As a consequence, if the $\rho_{ij}(z)$ are
 rational, then so are $\rho_{m;ij}(z)$, i.e. rationality is preserved by a CDT.
\begin{prop}
    \label{prop:rho2step}
    Let $\rho_{i j}, \tau_m,\pi_{m;i},\rho_{m;ij}$ be defined by
    \eqref{eq:rhoij}--\eqref{eq:pimi} and \eqref{eq:rhomij}. Then,
    \begin{align}
        \label{eq:rho2step}
      \rho_{m;ij}(z,t)=\rho_{i j}(z) - \frac{t \rho_{m i}(z) \rho_{m
      j}(z)}{1+t \rho_{mm}(z)},\quad m,i,j\in \Nz. 
    \end{align}
\end{prop}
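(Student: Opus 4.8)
The plan is to prove the identity \eqref{eq:rho2step} by differentiating both sides with respect to $z$ and checking that the derivatives agree, together with verifying that both sides vanish at $z=-1$. The left-hand side $\rho_{m;ij}$ is defined in \eqref{eq:rhomij} as an antiderivative of $\pi_{m;i}\pi_{m;j}\Wal_{\tau_m}$ vanishing at $z=-1$, so its derivative is immediately available. For the right-hand side, both $\rho_{ij}$ and the $\rho_{mi},\rho_{mj},\rho_{mm}$ appearing vanish at $z=-1$ by \eqref{eq:rhoij}, so the whole expression vanishes at $z=-1$, matching the left side; it then suffices to match derivatives.

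First I would substitute the explicit definition \eqref{eq:pimi} of $\pi_{m;i}$ and $\pi_{m;j}$ into the integrand on the left, and use \eqref{eq:taum} to write $\Wal_{\tau_m} = \Wal_\tau/(1+t\rho_{mm})^2$ via \eqref{eq:Wtaldef}. This gives
\[
\frac{d}{dz}\rho_{m;ij} = \frac{\big((1+t\rho_{mm})\pi_i - t\rho_{im}\pi_m\big)\big((1+t\rho_{mm})\pi_j - t\rho_{jm}\pi_m\big)}{(1+t\rho_{mm})^2}\,\Wal_\tau .
\]
On the right-hand side, I would differentiate $\rho_{ij}(z) - \dfrac{t\rho_{mi}\rho_{mj}}{1+t\rho_{mm}}$ using the quotient rule, recalling from \eqref{eq:rhoij} that $\rho_{ab}' = \pi_a\pi_b\Wal_\tau$ for all indices. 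The term $\rho_{ij}' = \pi_i\pi_j\Wal_\tau$, and the quotient term produces $-\big((t\rho_{mi}'\rho_{mj} + t\rho_{mi}\rho_{mj}')(1+t\rho_{mm}) - t\rho_{mi}\rho_{mj}\cdot t\rho_{mm}'\big)/(1+t\rho_{mm})^2$, i.e. everything is expressed back in terms of $\pi_i\pi_j\Wal_\tau$, $\pi_m\pi_i\Wal_\tau$, etc.

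The remaining step is then a purely algebraic verification: clear the common denominator $(1+t\rho_{mm})^2$ and the common factor $\Wal_\tau$, and check that the resulting polynomial identity in $\pi_i,\pi_j,\pi_m,\rho_{mm},\rho_{mi},\rho_{mj}$ (with parameter $t$) holds. Expanding the numerator on the left gives $(1+t\rho_{mm})^2\pi_i\pi_j - t(1+t\rho_{mm})\rho_{jm}\pi_i\pi_m - t(1+t\rho_{mm})\rho_{im}\pi_j\pi_m + t^2\rho_{im}\rho_{jm}\pi_m^2$, and matching this against the expansion of the right side (where $\rho_{mi}=\rho_{im}$) is a short bookkeeping exercise. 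I do not expect any genuine obstacle here; the only mild subtlety is being careful with the symmetry $\rho_{ab}=\rho_{ba}$ and with the fact that the identity is between rational functions, so one should either argue that denominators are nonzero as formal objects or invoke the standing assumption in the section that $1+t\rho_{mm}$ is a nonvanishing polynomial factor of $\tau_m$. An alternative, slightly slicker route avoids differentiation entirely: use the intertwining $A_{21}\pi_i = (\lambda_i-\lambda_m)\pi_{m;i}$ from \eqref{eq:A21pii} together with Lagrange's identity \eqref{eq:lagid} applied to the operator $T_2=\Tal_{\tau_m}$ to express $\rho_{m;ij}$ as a boundary term, but the direct differentiation argument is more elementary and self-contained, so that is the one I would write up.
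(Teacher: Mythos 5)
Your proposal is correct and follows essentially the same route as the paper: differentiate the right-hand side via the quotient rule, use $\rho_{ab}'=\pi_a\pi_b \Wal_\tau$ to recognize the result as $\pi_{m;i}\pi_{m;j}\Wal_{\tau_m}$ after factoring the numerator, and conclude by integration using $\rho_{ij}(-1)=0$. The algebraic bookkeeping you describe is exactly the computation carried out in the paper's proof.
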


\begin{proof}
    Observe that
    \begin{align*}
      &\lp \rho_{i j}- \frac{t\rho_{m i} \rho_{m j}}{1+t
        \rho_{mm}}\rp' = \rho_{i j}' - \frac{t \rho_{m i}'
        \rho_{m j}}{1+t \rho_{mm}} - \frac{t \rho_{m
        i} \rho_{m j}'}{1+t \rho_{mm}} + \frac{t^2
        \rho_{m i} \rho_{m j}}{(1+t \rho_{mm})^2}
        \rho_{mm}' \\  
      &\quad = \Big( (1+t \rho_{mm})^2 \pi_{i} \pi_{j} - t \pi_m
        (1+t \rho_{mm}) (\pi_{i} \rho_{m j} + \pi_{j} \rho_{m
        i}) + t^2 \rho_{m i} \rho_{m j} \pi_m^2 \Big)
        \frac{\Wal_{\tau}}{(1+t \rho_{mm})^2} \\ 
      &\quad = \Big( ( 1 + t \rho_{mm} ) \pi_{i} - t \rho_{m i} \pi_m \Big) \Big( ( 1 + t \rho_{mm} ) \pi_{j} - t \rho_{m j} \pi_m \Big) \Wal_{\tau_m} \\
      &\quad = \pi_{m; i} \pi_{m; j} \Wal_{\tau_m}.
    \end{align*}
     The
    desired result follows then by integration since
    $\rho_{i j}(-1)=0$ by definition, and $\rho_{i j}(z)$ is rational
    by assumption. 
\end{proof}

In order to demonstrate that the $\pi_{m;i}(z)$ are exceptional Gegenbauer polynomials in the sense of Definition ~\ref{def:GegPol}, we must first establish the positivity
of $\tau_m$ on $[-1,1]$ conditioned on the positivity of $\tau$.

\begin{prop}
  \label{prop:taumpos}
  Suppose that $\tau(z)>0$ for all $z\in I= [-1,1]$.  Then, $\tau_m$
  is positive on $I$ if and only if $1+t\nu_{m}>0$.
\end{prop}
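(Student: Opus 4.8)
The plan is to analyze the sign of $\tau_m(z,t)=\tau(z)(1+t\rho_{mm}(z))$ on $I=[-1,1]$ under the hypothesis $\tau>0$ on $I$. Since $\tau(z)>0$ throughout $I$, the sign of $\tau_m$ is governed entirely by the factor $g(z):=1+t\rho_{mm}(z)$, so it suffices to show $g(z)>0$ for all $z\in I$ if and only if $1+t\nu_m>0$. Recall from \eqref{eq:rhoij} that $\rho_{mm}(z)=\int_{-1}^z \pi_m(u)^2 \Wal_\tau(u)\,du$, and that $\Wal_\tau(u)=(1-u^2)^{\al-\frac12}\tau(u)^{-2}$ is nonnegative (indeed positive on the open interval) because $\tau>0$ on $I$ and $\al>-\tfrac12$ (or at least $\al\in\Nz+\tfrac12$, so the exponent is a nonnegative integer or half-integer making the factor nonnegative). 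Consequently $\rho_{mm}$ is nondecreasing on $I$, with $\rho_{mm}(-1)=0$ and $\rho_{mm}(1)=\nu_m>0$ by \eqref{eq:nui}.

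First I would treat the direction ``$g>0$ on $I$ implies $1+t\nu_m>0$'': this is immediate by evaluating at $z=1$, since $g(1)=1+t\rho_{mm}(1)=1+t\nu_m$. For the converse, assume $1+t\nu_m>0$ and split into cases according to the sign of $t$. If $t\ge 0$, then since $\rho_{mm}(z)\ge 0$ on $I$ we get $g(z)=1+t\rho_{mm}(z)\ge 1>0$. If $t<0$, then $t\rho_{mm}(z)$ is minimized where $\rho_{mm}(z)$ is maximal, i.e. at $z=1$ by monotonicity, so $g(z)\ge 1+t\rho_{mm}(1)=1+t\nu_m>0$ for all $z\in I$. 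In either case $g$ is strictly positive on $I$, hence $\tau_m=\tau\cdot g$ is strictly positive on $I$.

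The one point that requires a little care — and which I expect to be the only real obstacle — is justifying that $\rho_{mm}$ is genuinely monotone and bounded between $0$ and $\nu_m$ on the closed interval, since $\Wal_\tau$ has (integrable) singularities at the endpoints $z=\pm 1$ when $\al-\tfrac12<0$. This is handled by noting that the integrand $\pi_m(u)^2\Wal_\tau(u)$ is nonnegative and integrable on $[-1,1]$ (the classical Gegenbauer weight is integrable there, and $\tau$ is bounded away from zero on $I$ by hypothesis and continuity), so $\rho_{mm}$ is a well-defined, continuous, nondecreasing function on $[-1,1]$ with the stated endpoint values; in the context of the paper $\rho_{mm}$ is moreover assumed rational, so there is no subtlety at all. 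With that in hand the case analysis above completes the proof.
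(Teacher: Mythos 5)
Your proof is correct and follows essentially the same route as the paper's: both reduce positivity of $\tau_m$ to positivity of $1+t\rho_{mm}$, use the monotonicity of $\rho_{mm}$ (with $\rho_{mm}(-1)=0$, $\rho_{mm}(1)=\nu_m$), and split into the cases $t\ge 0$ and $t<0$. Your extra remark about integrability of the weight at the endpoints is a reasonable added precaution that the paper leaves implicit.
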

\begin{proof}
  By \eqref{eq:taum}, $\tau_m$ is positive on $I$ if and only if the
  same is true for $1+t \rho_{mm}(z)$. We know that $\rho_{mm}(z)$ is
  an increasing function, since
    \begin{gather*}
        \rho_{mm}' = \pi_m^2 \frac{(1-z^2)^{\al-1/2}}{\tau^2} >0.
    \end{gather*}
    Additionally, we notice that $\rho_{mm}$ is differentiable, and
    hence continuous, on the interval $I$. Fix $t$ and  define
    $g(z) := 1 + t \rho_{mm}(z)$, so that
    $g'(z) = t \rho_{mm}'(z)$. If $t \geq 0$, then $g$ is positive on
    $I$, since $\rho_{mm}$ is an increasing function and
    $g(-1) = 1$. However, if $t < 0$, then $g$ is a decreasing
    function on $I$. Importantly, $g$ cannot have any local
    extrema on the interval. Hence, if $g(1) = 1 + t \nu_m > 0$, then
    $g$ must be positive on all of $I$. Hence,
    $1 + t \rho_{mm}(z)$ is positive on $z \in I$ if and only if
    $1 + t\nu_m > 0$. 
\end{proof}

\begin{prop}
    \label{prop:norms}
    Let $\pi_{m;i}(z,t)$ be defined as in \eqref{eq:pimi}, and assume
    that $1+t\nu_m>0$.  Then, the norms \eqref{eq:nui} and \eqref{eq:numi}
    are related by
    \begin{equation}
      \label{eq:numi2}
      \nu_{m;i}^{-1} = \nu_m^{-1}+\delta_{im} t.
      \end{equation}
\end{prop}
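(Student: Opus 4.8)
The plan is to use the recursive formula for $\rho_{m;ij}$ from Proposition~\ref{prop:rho2step} and specialize it to the case $i=j$ evaluated at $z=1$. Recall from \eqref{eq:numi} that $\nu_{m;i} = \rho_{m;ii}(1)$ and from \eqref{eq:nui} that $\nu_i = \rho_{ii}(1)$. So I would start from
\[
  \nu_{m;i} = \rho_{m;ii}(1) = \rho_{ii}(1) - \frac{t\,\rho_{mi}(1)^2}{1+t\,\rho_{mm}(1)} = \nu_i - \frac{t\,\rho_{mi}(1)^2}{1+t\,\nu_m},
\]
using $\rho_{mm}(1)=\nu_m$. The denominator is nonzero (in fact positive) precisely by the hypothesis $1+t\nu_m>0$, which is why that assumption appears in the statement.

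The main point is then to evaluate $\rho_{mi}(1)$. By definition \eqref{eq:rhoij}, $\rho_{mi}(1) = \int_{-1}^1 \pi_m(u)\pi_i(u)\Wal_\tau(u)\,du$, which is the inner product of two distinct (when $i\neq m$) eigenpolynomials of the exceptional Gegenbauer operator $\Tal_\tau$. By Lemma~\ref{prop:orth} (orthogonality), this vanishes when $i\neq m$, while for $i=m$ it equals $\nu_m$. So $\rho_{mi}(1) = \delta_{im}\nu_m$. Substituting: for $i\neq m$ we get $\nu_{m;i} = \nu_i$, and for $i=m$ we get
\[
  \nu_{m;m} = \nu_m - \frac{t\,\nu_m^2}{1+t\nu_m} = \frac{\nu_m}{1+t\nu_m}.
\]

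Finally I would repackage both cases as the single reciprocal identity \eqref{eq:numi2}. For $i\neq m$, $\nu_{m;i}^{-1} = \nu_i^{-1} = \nu_i^{-1} + 0\cdot t$, matching the right-hand side since $\delta_{im}=0$. For $i=m$, $\nu_{m;m}^{-1} = (1+t\nu_m)/\nu_m = \nu_m^{-1} + t$, matching the right-hand side with $\delta_{mm}=1$. (One should note $\nu_{m;m}>0$ is guaranteed by $1+t\nu_m>0$ and $\nu_m>0$, so the reciprocal makes sense — this is really the same content as Proposition~\ref{prop:taumpos} ensuring positivity of the weight $\Wal_{\tau_m}$.)

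There is no real obstacle here: the proof is essentially a two-line substitution into Proposition~\ref{prop:rho2step} followed by an application of orthogonality (Lemma~\ref{prop:orth}) to kill the cross term. The only thing to be slightly careful about is the bookkeeping between the two cases $i=m$ and $i\neq m$ and checking that the compact form \eqref{eq:numi2} correctly unifies them; and to invoke the hypothesis $1+t\nu_m>0$ at the step where we divide, both to justify the algebra and to ensure $\nu_{m;i}$ is a genuine (positive) norm whose reciprocal is defined.
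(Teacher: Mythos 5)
Your proof is correct and follows essentially the same route as the paper's: specialize Proposition~\ref{prop:rho2step} at $z=1$, use orthogonality to kill the cross term when $i\neq m$, and compute the $i=m$ case directly to get $\nu_{m;m}=(t+\nu_m^{-1})^{-1}$. You also implicitly (and correctly) read the right-hand side of \eqref{eq:numi2} as $\nu_i^{-1}+\delta_{im}t$, which is what the identity must say for the $i\neq m$ case to hold.
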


\begin{proof}
    By Proposition ~\ref{prop:rho2step}, we have that
    \begin{gather*}
      \nu_{m;i} = \nu_i - \frac{t \rho_{mi}(1)^2}{1+t\rho_{mm}(1)} =
      \nu_i,\quad i\neq m,
    \end{gather*}
    Suppose that $i\neq m$. Then, $\pi_i$ and $\pi_m$ for $i \neq m$ are
    orthogonal relative to $\Wal_\tau$. Hence, $\rho_{mi}(1)=0$ and
    $\nu_{m;m} = \nu_m$.
    By definition, $\nu_{m} = \rho_{mm}(1)$.  Hence, if $i=m$, we have
    \[      
      \nu_{m;m}
      =  \nu_m - \frac{t
        \nu_m^2}{1+ t \nu_m}  = \frac{\nu_m}{1+t\nu_m} 
      = (t+\nu_m^{-1})^{-1}.
      \]
\end{proof}

\noindent
\begin{remark}
  As a consequence of \eqref{eq:numi}, we can recover the deformation
  parameter as the difference of the following norm reciprocals:
  \[
    t= \nu_{m;m}^{-1} -\nu_m^{-1}.
  \]
  Consequently,
    \begin{gather*}
        \nu_{m; m}^{-1} =  \nu_m^{-1}(1 + t \nu_m).
    \end{gather*}
    Hence, by Proposition ~\ref{prop:taumpos}, the positivity of
    $\tau_m$ is equivalent to the condition that $\nu_{m;m}>0$.
\end{remark}

\begin{prop}
  \label{prop:recortho}
 Let $\alpha\in\Nz+1/2$ and suppose that $\{\pi_i(z)\}_{i\in\Nz}$ are exceptional Gegenbauer
  polynomials with respect to the weight $\Wal_\tau(z)$.  Let
  $m\in \Nz$ and suppose that $1+ t\nu_m>0$.  Then, the family
  $\{ \pi_{m;i}(z) \}_{i\in \Nz}$ are also exceptional Gegenbauer
  polynomials with respect to the weight $\Wal_{\tau_m}(z)$.
\end{prop}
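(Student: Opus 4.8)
The plan is to verify the three defining conditions of Definition~\ref{def:GegPol} for the family $\{\pi_{m;i}\}_{i\in\Nz}$ with respect to the weight $\Wal_{\tau_m}$. Condition (a), that $\tau_m$ does not vanish on $I=[-1,1]$, is exactly Proposition~\ref{prop:taumpos} combined with the hypothesis $1+t\nu_m>0$ and the standing assumption $\tau>0$ on $I$. Condition (b), that the $\pi_{m;i}$ are eigenpolynomials of an $X$-Gegenbauer operator, is Proposition~\ref{prop:eigenpimi}, which shows $\Tal_{\tau_m}\pi_{m;i}=\lambda_i\pi_{m;i}$; one should also note that $\Tal_{\tau_m}$ is genuinely an exceptional Gegenbauer operator, i.e. its eigenpolynomial degree sequence misses only finitely many values, which follows because the CDT preserves rationality (Proposition~\ref{prop:rho2step}) and alters only finitely many degrees. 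So the real content is condition (c): $\rL^2$-completeness of $\{\pi_{m;i}\}_{i\in\Nz}$ in $\rL^2(I,\Wal_{\tau_m})$.

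For completeness, the natural approach is to transfer completeness across the CDT via the intertwiner. Since $\{\pi_i\}_{i\in\Nz}$ is complete in $\rL^2(I,\Wal_\tau)$ by hypothesis, and Proposition~\ref{prop:norms} shows the map $\pi_i\mapsto\pi_{m;i}$ is, up to the single rescaled norm $\nu_{m;m}$, an isometry-preserving correspondence of orthogonal systems (all norms agree for $i\neq m$, and $\nu_{m;m}^{-1}=\nu_m^{-1}+t$), one expects that the span of the $\pi_{m;i}$ is dense. The cleanest argument is: first establish orthogonality of $\{\pi_{m;i}\}$ relative to $\Wal_{\tau_m}$, which is immediate from Lemma~\ref{prop:orth} applied to the operator $\Tal_{\tau_m}$ (distinct eigenvalues $\lambda_i$, together with condition (a) and Lagrange's identity \eqref{eq:lagid}); then show that if $f\in\rL^2(I,\Wal_{\tau_m})$ is orthogonal to every $\pi_{m;i}$, then $f=0$. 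To do the latter, one pulls $f$ back through the second-order intertwiner $A_{21}=A_2A_1$ (or its adjoint $B_{12}=B_1B_2$, the Darboux back-transformation built from \eqref{eq:hAkdef}–\eqref{eq:hwkdef}), converting the orthogonality conditions $\langle f,\pi_{m;i}\rangle_{\Wal_{\tau_m}}=0$ into orthogonality of an associated $\tilde f\in\rL^2(I,\Wal_\tau)$ against all $\pi_i$ for $i\neq m$, plus one additional linear condition handling $i=m$; completeness of $\{\pi_i\}$ then forces $\tilde f$ to lie in the (at most one-dimensional) orthogonal complement, and tracking back through the intertwiner—using that $\pi_m$ itself is in the new family as $\pi_{m;m}$—forces $f=0$.

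The main obstacle I anticipate is the functional-analytic bookkeeping at the ``confluent'' eigenvalue $\lambda_m$: the intertwiner $A_{21}$ annihilates exactly the direction corresponding to $\pi_m$ (indeed \eqref{eq:A21pii} gives $A_{21}\pi_i=(\lambda_i-\lambda_m)\pi_{m;i}$, which degenerates at $i=m$), so the correspondence between the two polynomial systems is a bijection only after one accounts separately for the $i=m$ term, whose norm is the one that actually changes. One must check that no $\rL^2$-mass is lost or gained in this one exceptional direction—concretely, that $\pi_{m;m}=\pi_m$ is genuinely in $\rL^2(I,\Wal_{\tau_m})$ with the finite norm $\nu_{m;m}=(\nu_m^{-1}+t)^{-1}>0$ guaranteed by the hypothesis $1+t\nu_m>0$, and that adding this single rescaled vector to the image of the complement of $\mathrm{span}\{\pi_m\}$ recovers a complete system. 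A careful alternative that sidesteps some of this is to argue directly with the weighted $\rL^2$ spaces: the substitution $g\mapsto (1+t\rho_{mm})^{-1}g$ relates $\rL^2(I,\Wal_{\tau_m})$ to a subspace of $\rL^2(I,\Wal_\tau)$, and Proposition~\ref{prop:rho2step} identifies the Gram matrix of $\{\pi_{m;i}\}$ explicitly, reducing completeness to a rank/density statement about a rank-one perturbation of the identity on $\ell^2$, which is transparent.
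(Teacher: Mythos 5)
Your handling of conditions (a) and (b) coincides with the paper's: (a) is Proposition~\ref{prop:taumpos} plus the hypothesis $1+t\nu_m>0$, and (b) is Proposition~\ref{prop:CDT} together with Proposition~\ref{prop:eigenpimi}. The real issue is condition (c), and here neither of your two sketched routes closes the argument. The paper proves completeness by the Abraham--Moses kernel method: setting $\tpi_i=(\Wal_\tau)^{1/2}\pi_i$ and $\tpi_{m;i}=(\Wal_{\tau_m})^{1/2}\pi_{m;i}$, it verifies the distributional identity $\sum_i\nu_{m;i}^{-1}\tpi_{m;i}(z)\tpi_{m;i}(w)=\delta(z-w)$ by a direct resummation, using $\tpi_{m;i}=\tpi_i-t\rho_{im}\tpi_{m;m}$, $\tpi_m=(1+t\rho_{mm})\tpi_{m;m}$, and two summation identities derived from completeness of the old family via a truncation operator, namely $\theta(w-z)\tpi_j(z)=\sum_i\nu_i^{-1}\rho_{ij}(w)\tpi_i(z)$ and $\sum_i\nu_i^{-1}\rho_{ij}(z)\rho_{ij}(w)=\theta(w-z)\rho_{jj}(z)+\theta(z-w)\rho_{jj}(w)$. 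These identities are precisely what make the cross terms cancel; they are the quantitative content of the proof and do not appear in your plan.

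Concretely, the gaps are these. In your primary route, "pulling $f$ back through $A_{21}$" means applying the formal adjoint of a second-order differential operator to an arbitrary $f\in\rL^2(I,\Wal_{\tau_m})$; the result is a distribution, not an element of $\rL^2(I,\Wal_\tau)$, so the statement "an associated $\tilde f\in\rL^2(I,\Wal_\tau)$ is orthogonal to all $\pi_i$, $i\neq m$" is not justified without a domain/regularity argument and control of the boundary terms at $z=\pm1$ produced by integration by parts. In your alternative route, the perturbation is not rank one: $\tpi_{m;i}=\tpi_i-t\,\rho_{im}(z)\,\tpi_{m;m}(z)$ has coefficient $\rho_{im}(z)$ depending on $z$, so the new system is not obtained from the old one by adding multiples of a fixed vector, and the Gram matrix from Proposition~\ref{prop:rho2step} (a rank-one modification at $z=1$ only, since $\rho_{im}(1)=0$ for $i\neq m$) records orthogonality and norms but not completeness --- an orthogonal system with the right norms can still fail to be complete. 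So the reduction to "a rank-one perturbation of the identity on $\ell^2$" is not established, and the step that would actually prove density is missing from both routes.
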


\begin{proof}
  Proposition ~\ref{prop:taumpos} establishes condition (a) of
  Definition ~\ref{def:GegPol}.  Proposition ~\ref{prop:CDT} establishes
  (b).
  It remains to prove that the completeness condition (c) also holds.  Set
  \begin{equation}
    \label{eq:tpidef}
    \begin{aligned}
    \tpi_i(z) &:= \lp\Wal_\tau(z)\rp^{\frac12} \pi_i(z) =
    (1-z^2)^{\frac{\al}{2} - \frac14} \frac{\pi(z)}{\tau(z)},\\
    \tpi_{m;i}(z) &:= \lp\Wal_{\tau_m}(z)\rp^{\frac12} \pi_{m;i}(z) =
    (1-z^2)^{\frac{\al}{2} - \frac14} \frac{\pi_{m;i}(z)}{\tau_m(z)}.
    \end{aligned}
  \end{equation}
  By assumption, the eigenpolynomials $\{\pi_i(z)\}_{i \in \Nz}$ 
  form a complete basis of $\rL^2(I,\Wal_\tau(z)dz)$. Equivalently,
  $\{ \tpi_i(z)\}_{i\in \Nz}$ are complete in $\rL^2(I,dz)$.  We seek
  to show that $\{ \tpi_{m;i}(z)\}_{i\in \Nz}$ are complete in
  $\rL^2(I,dz)$ also.

  Following an argument adapted from the appendix of
  \cite{AM80}, we observe that the completeness of $\{\tilde\pi_i\}_{i\in\Nz}$ in $\rL^2(I, dz)$ is equivalent to 
  \begin{equation}
    \label{eq:sumtpidelta}
      \sum_{i\in \Nz}\nu_i^{-1} \tpi_i(z) \tpi_i(w) = \delta(z-w),
  \end{equation}
    where the equality must be understood in distributional sense on
    $I\times I$.
    Let $f$ be  piece-wise continuous function in $I$ and set
    \begin{equation}
      \label{eq:fidef}
      [f]_i = \nu_i^{-1} \int_I\tpi_i(u) f(u) du.
    \end{equation}
    Relation \eqref{eq:sumtpidelta} then entails
    \begin{equation}
      \label{eq:sumfitpi}
      \sum_{i\in \Nz}  [f]_i \, \tpi_i(z) =      f(z),\quad \text{a.e. } z\in I.
    \end{equation}
    Moreover, if $g$ is another piece-wise continuous in $I$
    function, then
    \begin{equation}
      \label{eq:intfg}
       \int_I f(u) g(u) du =  \sum_{i\in \Nz} \nu_i [f]_i [g]_i.
    \end{equation}
    For a given $w\in I$, and a function $f(z),\; z\in I$, define the
    truncation operator
    \[ (\cT_wf)(z) := \theta(w-z) f(z),\quad z\in I,\]
    where
    \begin{equation}
      \label{eq:heaviside}
      \theta(u):=
      \begin{cases}
        1 & \text{ if } u> 0,\\
        \frac12 & \text{ if } u=0,\\
        0 & \text{ if } u<0;
      \end{cases}
    \end{equation}
    denotes the Heaviside step function.  We may now rewrite
    \eqref{eq:rhoij} as
    \[
      \rho_{i j}(w)  =  \int_{I} 
      (\cT_w\tpi_{j})(u)\, \tpi_{i}(u)\,
      du=\nu_i [ \cT_w\tpi_j]_i,\quad i, j\in \Nz,\; w\in I.
    \]
    Applying \eqref{eq:sumfitpi} then    gives 
    \begin{equation}
      \label{eq:thetawztpiz}
      \theta(w-z) \tpi_j(z) = (\cT_w\tpi_{j})(z)
      =\sum_{i\in \Nz}  \nu_i^{-1} \rho_{ij}(w)    \tpi_i(z),\quad
      z,w\in I.
    \end{equation}
    Moreover,  \eqref{eq:intfg} implies that
    \begin{align*}
      \sum_{i\in \Nz} \nu_i^{-1} \rho_{ij}(z) \rho_{ij}(w)
        &=\int_I  (\cT_w \tpi_j)(u)  (\cT_z \tpi_j)(u) du,
          \quad z,w\in I,\;j\in \Nz.
    \end{align*}
    Observe that
    \[ \theta(z) \theta(w)
      = \theta(\min(z,w)),\quad z,w\in I.\]
    Hence,
    \[ \int_I (\cT_w \tpi_j)(u) (\cT_z \tpi_j)(u) du = \int_I
      \theta(w-u)\theta(z-u) \tpi_j(u) \tpi_j(u)\, du =
      \rho_{jj}(\min(z,w)).\] Therefore,
    \begin{equation}
      \label{eq:sumrhozw}
            \sum_{i\in \Nz} \nu_i^{-1} \rho_{ij}(z) \rho_{ij}(w)
            = \theta(w-z) \rho_{jj}(z) + \theta(z-w) \rho_{jj}(w). 
    \end{equation}
    By \eqref{eq:taum}, \eqref{eq:pimi} and \eqref{eq:tpidef}, 
    \begin{align}
      \label{eq:trhomm}
      1+ t\rho_{mm}       &= \frac{\tau_m}{\tau};\\
      \label{eq:tpimi}
      \tpi_{m;i}
      &=   (1-z^2)^{\frac{\al}{2} - \frac14} \frac{1}{\tau_m} \lp
        \frac{\tau_m}{\tau} \pi_i - t\rho_{im}
        \pi_m\rp
        =\tpi_i  - t \rho_{im} \tpi_{m;m} ;\\
      \label{eq:tpim}
      \tpi_m &= (1+t\rho_{mm}) \tpi_{m;m}.
    \end{align}
    By \eqref{eq:numi},  \eqref{eq:tpimi},
    \eqref{eq:sumtpidelta}, \eqref{eq:thetawztpiz}, \eqref{eq:sumrhozw} and
    \eqref{eq:tpim}, we have 
    \begin{align*}
      \sum_{i\in \Nz}&\nu_{m;i}^{-1} \tpi_{m; i}(z) \tpi_{m; i}(w) \\
      & = t
        \tpi_{m;m}(z)\tpi_{m;m}(w)
        + \sum_i\nu_{i}^{-1} \tpi_{m;i}(z)\tpi_{m;i}(w)\\
      & = t  \tpi_{m;m}(z)\tpi_{m;m}(w)
        + \sum_i\nu_{i}^{-1} \lp
        \tpi_i(z) -t \rho_{im}(z)\tpi_{m;m}(z)\rp \lp
        \tpi_i(w) - t\rho_{im}(w) \tpi_{m;m}(w)\rp \\ 
      & = \sum_i \nu_{i}^{-1} \tpi_i(z) \tpi_i(w)+t
        \tpi_{m;m}(z)\tpi_{m;m}(w) \lp 1  + t \sum_i \nu_i^{-1}
        \rho_{im}(z)\rho_{im}(w)\rp\\ 
      &\quad -t\, \tpi_{m;m}(z) \sum_i \nu_i^{-1} \rho_{im}(z) \tpi_i(w)
        -t\, \tpi_{m;m}(w) \sum_i \nu_i^{-1} \rho_{im}(w) \tpi_i(z)\\
      & = 
        \delta(z-w)+ t  \tpi_{m;m}(z)\tpi_{m;m}(w) \lp 1+\theta(w-z) t
        \rho_{mm}(z) + \theta(z-w)  
        t\rho_{mm}(w)\rp\\
      &\quad -
        t\,\theta(z-w)\tpi_{m;m}(z)\tpi_m(w)- 
        t\,\theta(w-z) \tpi_m(z) \tpi_{m;m}(w)\\
      & = \delta(z-w)+ t  \tpi_{m;m}(z)\tpi_{m;m}(w) \Big(  1
        +\theta(w-z) t \rho_{mm}(z) + \theta(z-w)
        t\rho_{mm}(w)\\
      &\qquad\qquad
        -        \theta(z-w) (1+t \rho_{mm}(w)) 
        -        \theta(w-z) (1+t \rho_{mm}(z)) 
        \Big)\\
      & = \delta(z-w)+ t  \tpi_{m;m}(z)\tpi_{m;m}(w) \Big(
        1-\theta(w-z) - \theta(z-w)\Big)\\
      &= \delta(z-w).
    \end{align*}
We conclude that the set  $\{\tilde \pi_{m;i}\}_{i\in\Nz}$ is complete in $\rL^2(I, dz)$, and by virtue of the previously stated equivalence, the set $\{\pi_{m;i}\}_{i\in\Nz}$ is complete in $\rL^2(I, \Wal_{\tau_m}(z)dz)$.
\end{proof}

Note that in Proposition~\ref{prop:recortho} and in fact, all throughout the current Section, we have assumed that $\pi_{m,i}$ are polynomials, which is not guaranteed by their defining expressions \eqref{eq:rhoij}-\eqref{eq:pimi} and \eqref{eq:Wtaldef}, even if $\pi$ are polynomials and $\alpha\in\Nz+1/2$ (but they are certainly not polynomials if $\alpha$ is not half-integer). In the following Section we will show that when these objects are connected to the classical Gegenbauer polynomials by a chain of confluent Darboux transformations, the assumptions on polynomiality are guaranteed at each step of the chain.


\section{Exceptional Gegenbauer polynomials of the second kind}
\label{sec:xgegen}

In this Section we provide explicit formulas for the construction of exceptional Gegenbauer polynomials of the second kind and their associated operators in terms of a 
matrix whose entries involve classical Gegenbauer
polynomials. Throughout this section, we also assume that $\al\in \Nz+\frac12$ is a
positive half-integer.  Note that, with this assumption in place, the
weight $\Wal(z)$  in \eqref{eq:Wdef} becomes a polynomial.

We next define several objects that allow us to describe the exceptional Gegenbauer polynomials and operators below.
Set
\begin{equation}
  \label{eq:Rijdef}
  \rhoal_{ij}(z):= \int_{-1}^z  \Cal_i(u)
  \Cal_j(u)\Wal\!(u)\, du,   \quad i,j\in \Nz
\end{equation}
and observe that the above functions are polynomials precisely because
$\al$ is a positive half-integer.  Given an $n$--tuple $\bm\in\Nz^n$
and the associated $\bt_\bm\in\R^n$, we then define
$\cRal_{\bm} = \cRal_{\bm}(z;\bt_\bm)$ as the $n\times n$ matrix with
polynomial entries given by
\begin{align}
  \label{eq:cRalkl}
  [\cRal_{\bm}]_{k\ell} = \delta_{k\ell}+t_{m_\ell}
  \rhoal_{m_km_\ell}(z), \quad k,\ell\in\{1,\dots, n\}, 
\end{align} 
We denote its determinant by
\begin{gather}
  \label{eq:taudef}
  \taual_{\bm} := \det \cRal_\bm.
\end{gather}
Next, define the $n$--tuple of polynomials
\begin{gather}
  \label{eq:Qmdef}
  \lp\bQal{\bm}{}\rp^T:= \taual_{\bm} \lp\cRal_{\bm}\rp^{-1} \lp
  \Cal_{m_1},\ldots, \Cal_{m_n}\rp^T,
\end{gather}
We are now ready to define the fundamental objects of this Section.
\begin{definition}
Let $\bm=(m_1,\dots,m_n)\in \Nz^n$ be a tuple of distinct integers and $\bQal{\bm}$ the $n$-tuple of polynomials defined by \eqref{eq:Rijdef}-\eqref{eq:Qmdef}. We define the \emph{exceptional Gegenbauer polynomials of the second kind} associated to $\bm$ as
\begin{align}
  \label{eq:Cmidef}
  \Cal_{\bm;i} := \big[ \bQal{(\bm, i)} \big]_{n+1},\quad i\in \Nz.
\end{align}
\end{definition}

\begin{remark}
Note that, by construction, $\taual_\bm=\taual_{\bm}(z;\bt_\bm)$ is
invariant with respect to permutations of the indices
$\bm=(m_1,\ldots, m_n)$ and that
$\bQal{\bm} = \bQal{\bm}(z;\bt_{\bm})$ is equivariant with respect to
such permutations. In addition,
$\Cal_{\bm;i}= \Cal_{\bm;i}(z;\bt_\bm)$ is symmetric in $\bm$ and does
not depend on $t_i$ since
$\taual_{(\bm,i)} \big[ \lp\cRal_{(\bm, i)}\rp^{-1} \big]_{n+1,j}$
correspond to the minors of the last column of $\cRal_{(\bm,i)}$, the
only column where $t_i$ appears.
\end{remark}

The main result of this section states that the polynomials
$\big\{ \Cal_{\bm; i}(z; \bt_\bm)\big\}_{i \in \Nz}$ defined above are indeed exceptional Gegenbauer polynomials (in the sense of Definition~\ref{def:GegPol}), provided the
real parameters $\bt_\bm$ satisfy certain constraints to ensure that
$\taual_\bm(z;t_\bm)$ is positive on $z\in[-1,1]$. We first state that the polynomial $\tau_{\bm}^{(\alpha)}$ inserted in  \eqref{eq:GegOp}  is an exceptional Gegenbauer operator in the sense of Definition~\ref{def:GegOp}.
\begin{thm}
    \label{thm:Teigen}
    Let $\alpha\in\Nz+\frac 12$ and $\bm\in \Nz^n$. Consider the $n$-parameter family of
    operators $\Tal_{\bm} := \Tal_{\taual_{\bm}}$ given by
    \eqref{eq:GegOp} and \eqref{eq:taudef}. For each value of the
    parameters $\bt_{\bm}$, this operator is an exceptional Gegenbauer
    operator that satisfies
    \begin{equation}
        \label{eq:Teigen}
        \Tal_{\bm}  \Cal_{\bm;i} = \lambda_i  \Cal_{\bm;i},\quad i\in \Nz,
    \end{equation}
    with $ \lambda_i=-i(2\al+i)$.
\end{thm}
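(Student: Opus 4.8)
The plan is to reduce Theorem~\ref{thm:Teigen} to the machinery of Sections~\ref{sec:CDT} and~\ref{sec:Gegen} by establishing the equivalence of the matrix and recursive descriptions: that for every tuple $\bm=(m_1,\dots,m_n)$ of distinct non-negative integers the matrix objects $\taual_\bm=\det\cRal_\bm$ and $\Cal_{\bm;i}=[\bQal{(\bm,i)}]_{n+1}$ of \eqref{eq:cRalkl}--\eqref{eq:Cmidef} coincide with the $\tau$-function and the eigenpolynomials produced from the classical data $\bigl(\Tal,\{\Cal_i\}_{i\in\Nz},\Wal\bigr)$ by the $n$-step chain of confluent Darboux transformations of Section~\ref{subsec:CDTGeg}, performed at the eigenvalues $\lambda_{m_1},\dots,\lambda_{m_n}$. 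Since $\al>0$, the map $k\mapsto\lambda_k=-k(2\al+k)$ is injective on $\Nz$, so the hypothesis that the entries of $\bm$ are distinct guarantees that the chain is well defined at every stage: the operator reached after $k$ steps still possesses a polynomial eigenfunction at $\lambda_{m_{k+1}}$, namely $\Cal_{(m_1,\dots,m_k);m_{k+1}}$.

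I would prove the equivalence by induction on $n$, carrying along the following package of assertions for every length-$n$ tuple $\bm$ of distinct non-negative integers: (a) $\taual_\bm$ is a nonzero polynomial equal to the $n$-step $\tau$-function (indeed \eqref{eq:cRalkl} gives $\taual_\bm(-1;\bt_\bm)=1$); (b) each $\Cal_{\bm;i}$ is a polynomial equal to the corresponding $n$-step eigenpolynomial, with $\Tal_{\taual_\bm}\Cal_{\bm;i}=-i(2\al+i)\Cal_{\bm;i}$; (c) the functions $\rhoal_{\bm;ij}(z):=\int_{-1}^z\Cal_{\bm;i}(u)\,\Cal_{\bm;j}(u)\,\Wal_{\taual_\bm}(u)\,du$ are rational, with $\taual_\bm\,\rhoal_{\bm;ij}$ polynomial; (d) $\Tal_{\taual_\bm}$ is an exceptional Gegenbauer operator, in the sense of Definition~\ref{def:GegOp}, with eigenpolynomials $\{\Cal_{\bm;i}\}_{i\in\Nz}$. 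The base case $n=0$ is immediate: the empty tuple gives $\taual_{()}=1$, a $1\times1$ computation gives $[\bQal{(i)}]_1=\Cal_i$, and $\Tal$ is the classical Gegenbauer operator. For the inductive step, fix a length-$n$ tuple $\bm$ for which the package holds and an index $m_{n+1}\notin\{m_1,\dots,m_n\}$. By parts (a)--(d) the level-$n$ data satisfy the standing hypotheses of Section~\ref{subsec:CDTGeg} with $\tau=\taual_\bm$ and $\pi_i=\Cal_{\bm;i}$, so Propositions~\ref{prop:CDT}, \ref{prop:eigenpimi} and~\ref{prop:rho2step} apply: they realize the $(n+1)$st confluent Darboux transformation and describe it through the recursions \eqref{eq:taum}, \eqref{eq:pimi} and \eqref{eq:rho2step}. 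It then remains to verify that these recursions reproduce the matrix formulas for the tuple $(\bm,m_{n+1})$, which reduces to three identities of elementary linear algebra for the bordered matrix $\cRal_{(\bm,m_{n+1})}$: a Schur-complement identity giving
\[
  \taual_{(\bm,m_{n+1})}=\taual_\bm\bigl(1+t_{m_{n+1}}\rhoal_{\bm;m_{n+1}m_{n+1}}\bigr),
\]
a cofactor-expansion identity expressing $[\bQal{(\bm,m_{n+1},i)}]_{n+2}$ in terms of the level-$n$ adjugates and thereby matching \eqref{eq:pimi}, and a Sherman--Morrison--Woodbury identity matching \eqref{eq:rho2step} for $\rhoal_{(\bm,m_{n+1});ij}$. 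The first two identities, together with the polynomiality of $\rhoal_{ij}$ (where $\al\in\Nz+\tfrac12$ is used essentially), show $\taual_{(\bm,m_{n+1})}$ and $\Cal_{(\bm,m_{n+1});i}$ are polynomials; the third gives rationality of $\rhoal_{(\bm,m_{n+1});ij}$. Proposition~\ref{prop:eigenpimi} then yields $\Tal_{\taual_{(\bm,m_{n+1})}}\Cal_{(\bm,m_{n+1});i}=\lambda_i\Cal_{(\bm,m_{n+1});i}$, and $\lambda_i=-i(2\al+i)$ by the level-$n$ assertion (b) since the CDT preserves the eigenvalue label; finally (d) propagates because $\Cal_{(\bm,m_{n+1});m_{n+1}}=\Cal_{\bm;m_{n+1}}$ keeps its degree while, for all but finitely many $i$, a leading-coefficient comparison in \eqref{eq:pimi} fixes $\deg\Cal_{(\bm,m_{n+1});i}$, so only finitely many degrees stay absent.

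Granting the induction, Theorem~\ref{thm:Teigen} is exactly the conjunction of parts (a), (b) and (d) for the given $\bm$: part (a) makes $\taual_\bm$ a nonzero polynomial, so $\Tal_\bm=\Tal_{\taual_\bm}$ is a well-defined differential expression of the form \eqref{eq:GegOp}; part (b) is the eigenvalue relation \eqref{eq:Teigen} with $\lambda_i=-i(2\al+i)$; and part (d) says $\Tal_\bm$ is an exceptional Gegenbauer operator.

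The heart of the argument --- and the main obstacle --- is the inductive step, and in particular the three bordered-matrix identities. Each is standard in isolation, but one must keep careful track of which deformation parameters $t_{m_k}$ occur in which rows and columns of $\cRal_{(\bm,m_{n+1})}$ (recall that $t_i$ appears only in the last column of $\cRal_{(\bm,i)}$), and the Woodbury-type recursion for $\rhoal$ must be matched with \eqref{eq:rho2step} on the nose. A secondary delicate point is the degree bookkeeping in part (d): the two summands of \eqref{eq:pimi} have equal top degree, and their leading terms cancel for finitely many indices, which is precisely what produces the finitely many missing degrees.
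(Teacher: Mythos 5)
Your proposal follows essentially the same route as the paper: it reduces the theorem to the equivalence between the matrix formulas \eqref{eq:taudef}--\eqref{eq:Cmidef} and the recursive $n$-step CDT construction (the paper's Proposition~\ref{prop:taurec}, whose proof is delegated to the exceptional Legendre reference) and then invokes Proposition~\ref{prop:eigenpimi} for the eigenvalue relation. Your three bordered-matrix identities (Schur complement, cofactor expansion, Sherman--Morrison--Woodbury) are exactly the content of that delegated equivalence, so the argument is correct and, if anything, supplies more of the detail than the paper itself records.
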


\noindent
The following theorem provides necessary and sufficient conditions for the polynomials $\{ \Cal_{\bm;i}\}_{i\in\Nz}$ to be a family of exceptional Gegenbauer polynomials according to Definition~\ref{def:GegPol}. If these conditions hold the family
$\{ \Cal_{\bm; i}(z; \bt_\bm)\}_{i \in \Nz}$ is orthogonal and
complete, like their classical counterparts.

\begin{thm}
    \label{thm:Portho}
    Let $\alpha\in\Nz+\frac 12$ and $\bm\in \Nz^n$ with $m_1,\ldots, m_n$ distinct. Then the polynomial
    $\taual_\bm(z;\bt_\bm)$ in \eqref{eq:taudef} has no zeros on
    $z\in [-1,1]$ if and only if
    \begin{equation}
        \label{eq:tconstraints}
        t_{m_j} > -\lp \nual_{m_j}\rp^{-1},\quad j=1,\ldots, n,
    \end{equation}
    with $ \nual_{m_j}$ as in  \eqref{eq:nuidef}.
    If the above conditions hold, then
    $\big\{ \Cal_{\bm; i}(z; \bt_\bm)\big\}_{i \in \Nz}$ are exceptional
    Gegenbauer polynomials with weight $\Wal_{\tau_m}$ and  norms 
    given by
    \begin{equation}
        \label{eq:norm}
        \int_{I} \lp \Cal_{\bm; i}(u)\rp^2 \Wal_{\tau_m}(u)du \
        = \frac{\nual_i}{1+ \delta_{i,\bm}  t_i\nual_i}
      \end{equation}
      where
      \begin{equation}
        \label{eq:deltaibm}
        \delta_{i,\bm} :=
        \begin{cases}
          1 & \text{ if } i \in \{ m_1,\ldots, m_n \};\\
          0 & \text{ otherwise.}
        \end{cases}
      \end{equation}
\end{thm}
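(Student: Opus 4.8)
The plan is to run a single induction on $n$, at each step invoking the one-step results of Subsection~\ref{subsec:CDTGeg} (Propositions~\ref{prop:taumpos}, \ref{prop:norms}, \ref{prop:recortho}), after identifying the matrix objects \eqref{eq:taudef}--\eqref{eq:Cmidef} with an $n$-fold chain of confluent Darboux transformations of the classical Gegenbauer operator. The crucial preliminary is the identity expressing that \emph{appending one index is one CDT}: writing $\bm'=(m_1,\dots,m_{n-1})$ and letting $\rhoal_{\bm';ij}:=\int_{-1}^z\Cal_{\bm';i}\Cal_{\bm';j}\,\Wal_{\taual_{\bm'}}\,du$ be the analogue of $\rho_{m;ij}$ for the family $\{\Cal_{\bm';i}\}$ with weight $\Wal_{\taual_{\bm'}}$,
\[
  \taual_{(\bm',m_n)}=\taual_{\bm'}\bigl(1+t_{m_n}\rhoal_{\bm';m_nm_n}\bigr),
\]
\[
  \Cal_{(\bm',m_n);i}=\bigl(1+t_{m_n}\rhoal_{\bm';m_nm_n}\bigr)\Cal_{\bm';i}-t_{m_n}\,\rhoal_{\bm';im_n}\,\Cal_{\bm';m_n}.
\]
I would prove this by a Schur-complement expansion of $\det\cRal_{(\bm',m_n)}$ along its leading $(n-1)\times(n-1)$ block $\cRal_{\bm'}$, together with $(n-1)$ applications of Proposition~\ref{prop:rho2step} to rewrite the $\rhoal_{\bm';ij}$ through the classical $\rhoal_{ij}$; this matches term by term the one-step formulas \eqref{eq:taum}--\eqref{eq:pimi} and \eqref{eq:rhomij} under the substitution $\tau\leftrightarrow\taual_{\bm'}$, $\pi_i\leftrightarrow\Cal_{\bm';i}$, $m\leftrightarrow m_n$, $t\leftrightarrow t_{m_n}$. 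In particular the adjugate form $\taual_\bm(\cRal_\bm)^{-1}$ has polynomial entries, so $\taual_\bm$ and the $\Cal_{\bm;i}$ are genuine polynomials (here $\al\in\Nz+\tfrac12$ is used, via \eqref{eq:Rijdef}); hence the standing hypotheses of Subsection~\ref{subsec:CDTGeg} hold at every node of the chain, and Theorem~\ref{thm:Teigen} supplies the eigenvalue relation \eqref{eq:Teigen} and condition (b) of Definition~\ref{def:GegPol}.

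I then argue by induction on $n$, the inductive statement comprising all assertions of the theorem: the ``no zeros'' equivalence, the exceptional Gegenbauer property with weight $\Wal_{\taual_\bm}$, and the norm formula \eqref{eq:norm}. Since $\rhoal_{ij}(-1)=0$ we have $\cRal_\bm(-1)=I$, so $\taual_\bm(-1)=1>0$; by continuity on the connected interval $I=[-1,1]$, ``$\taual_\bm$ has no zero on $I$'' is equivalent to ``$\taual_\bm>0$ on $I$''. The base case $n=1$ is precisely Propositions~\ref{prop:taumpos}, \ref{prop:recortho} and \ref{prop:norms} applied to the classical family $\{\Cal_i\}$ (weight $\Wal$, norms $\nual_i$). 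For the inductive step, write $\taual_\bm=\taual_{\bm'}\,g$ with $g:=1+t_{m_n}\rhoal_{\bm';m_nm_n}$, $g(-1)=1$. If $\taual_\bm$ has no zero on $I$, then neither does its factor $\taual_{\bm'}$, so the inductive hypothesis gives $t_{m_j}>-(\nual_{m_j})^{-1}$ for $j<n$, that $\{\Cal_{\bm';i}\}$ is an exceptional Gegenbauer family with weight $\Wal_{\taual_{\bm'}}$, and (since $m_n\notin\bm'$, so $\delta_{m_n,\bm'}=0$) that the $\Wal_{\taual_{\bm'}}$-norm of $\Cal_{\bm';m_n}$ equals $\nual_{m_n}$; Proposition~\ref{prop:taumpos} applied to this family then yields $1+t_{m_n}\nual_{m_n}>0$, i.e. $t_{m_n}>-(\nual_{m_n})^{-1}$. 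Conversely, if all the constraints hold, the inductive hypothesis gives $\taual_{\bm'}>0$ on $I$, so Proposition~\ref{prop:taumpos} gives $g>0$ on $I$, whence $\taual_\bm=\taual_{\bm'}g>0$ on $I$. (Permutation-invariance of $\taual_\bm$, noted after the definition of $\Cal_{\bm;i}$, makes the choice of index peeled off immaterial.)

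Still in the inductive step, assuming the constraints: condition (a) of Definition~\ref{def:GegPol} is the positivity just established; (b) is Theorem~\ref{thm:Teigen}; and (c) follows from Proposition~\ref{prop:recortho}, whose hypotheses now hold---by the inductive hypothesis $\{\Cal_{\bm';i}\}$ is exceptional Gegenbauer with positive weight $\Wal_{\taual_{\bm'}}$ and $m_n$-norm $\nual_{m_n}$, and $1+t_{m_n}\nual_{m_n}>0$---so $\{\Cal_{\bm;i}\}=\{\Cal_{(\bm',m_n);i}\}$ is exceptional Gegenbauer with weight $\Wal_{\taual_{(\bm',m_n)}}=\Wal_{\taual_\bm}$; iterating from the classical system (complete because $\al\in\Nz+\tfrac12$ forces $\al>-\tfrac12$), $\rL^2$-completeness propagates all the way to $\{\Cal_{\bm;i}\}$. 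For the norm, Proposition~\ref{prop:norms} gives $\nu_{m_n;i}^{-1}=\nu_i^{-1}+\delta_{i,m_n}t_{m_n}$ with $\nu_i$ the $\Wal_{\taual_{\bm'}}$-norm of $\Cal_{\bm';i}$; the inductive hypothesis gives $\nu_i^{-1}=(\nual_i)^{-1}+\delta_{i,\bm'}t_i$, and summing (using $\bm=(\bm',m_n)$ with $m_n\notin\bm'$, so $\delta_{i,\bm'}t_i+\delta_{i,m_n}t_{m_n}=\delta_{i,\bm}t_i$ in all cases) yields that the $\Wal_{\taual_\bm}$-norm of $\Cal_{\bm;i}$ has reciprocal $(\nual_i)^{-1}+\delta_{i,\bm}t_i$, which rearranges to \eqref{eq:norm}. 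This closes the induction.

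The principal obstacle is the determinantal identity of the first paragraph: one must verify that the Schur-complement expansion of $\det\cRal_{(\bm',m_n)}$, combined with exactly $(n-1)$ uses of the recursion of Proposition~\ref{prop:rho2step}, really collapses to the clean one-step formulas, that the minors of the last column of $\cRal_{(\bm',i)}$ reproduce the definition of $\pi_{m;i}$, and that $\Wal_{\taual_{\bm'}}$ is precisely the weight against which the next layer of $\rho$'s is integrated---so that the identification is compatible with the chain underlying Theorem~\ref{thm:Teigen}. Once that bookkeeping is in place, everything downstream is a routine assembly of the single-CDT propositions.
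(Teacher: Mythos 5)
Your argument is correct and follows essentially the same route as the paper's: first identify the matrix definitions \eqref{eq:taudef}--\eqref{eq:Cmidef} with the $n$-step CDT recursion (this is the paper's Proposition~\ref{prop:taurec}, whose determinantal verification the paper delegates to the Legendre case in \cite{GFGM21}, much as you sketch via Schur complements), and then induct on $n$ through Propositions~\ref{prop:taumpos}, \ref{prop:norms} and \ref{prop:recortho}, exactly as the paper does. The one place where you assert slightly more than you justify is the ``only if'' direction of the zero-free equivalence --- since $g=1+t_{m_n}\rhoal_{\bm';m_nm_n}$ is only a rational function whose denominator can vanish precisely at zeros of $\taual_{\bm'}$, the inference ``$\taual_\bm=\taual_{\bm'}g$ nonvanishing on $I$ implies its factor $\taual_{\bm'}$ nonvanishing on $I$'' is not automatic --- but the paper's own proof is no more explicit on this point, so this does not distinguish your argument from theirs.
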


As mentioned above, the degree of the $i$-th exceptional Gegenbauer
polynomial $ \Cal_{\bm;i}$ is not necessarily $i$. The next
proposition provides this result.  It is also worth noting that, as
opposed to the exceptional families of the first kind, the degree
sequence of the exceptional Gegenbauer polynomials is not an
increasing sequence, which is further evidence of the different
construction.
\begin{prop}
    \label{prop:degPtau}
    Let $\alpha\in\Nz+\frac 12$ and $\bm\in \Nz^n$ with $m_1,\ldots, m_n$ distinct. Let
    $\taual_\bm, \Cal_{\bm;i}$ be as defined in \eqref{eq:taudef} and
    \eqref{eq:Cmidef}. Then,
    \begin{align} 
        \label{eq:degtau}
        &\deg_z \taual_{\bm} = 2(m_1+\ldots + m_n+\al n),\\
        \label{eq:degPn}
        &\deg_z \Cal_{\bm;i} = 2(m_1+\ldots +
          m_n+\al n)+i-  2\delta_{i,\bm}(i+\al),\quad i\in \Nz. 
    \end{align}
    Moreover,
    \begin{equation}
        \label{eq:Pmi}
        \Cal_{\bm;m_k} = \Cal_{m_1,\ldots, \widehat{m_k},
          \ldots, m_n;m_k} ,\quad k=1,\ldots, n,
    \end{equation}
    where the hat symbol denotes the omission of the $k\supth$ entry
    of $\bm$.
\end{prop}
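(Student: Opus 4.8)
The plan is to prove the three claims by relating the matrix definitions \eqref{eq:taudef}, \eqref{eq:Cmidef} back to the recursive CDT construction of Sections~\ref{sec:CDT}--\ref{sec:Gegen}, where the relevant degree bookkeeping is easy to track one step at a time. For the degree formulas, the key observation is that each CDT $\Tal_\tau \to \Tal_{\tau_m}$ acts on the $\tau$-function via \eqref{eq:taum}, namely $\tau_m = \tau\,(1 + t\rho_{mm})$, so $\deg_z \tau_m = \deg_z \tau + \deg_z \rho_{mm}$. Since $\rho_{mm}' = \pi_m^2 (1-z^2)^{\al-1/2}\tau^{-2}$ by \eqref{eq:rhomij}, and using the fact (to be established from the matrix formulas, or by induction alongside \eqref{eq:degtau}) that $\deg_z\pi_m = \deg_z\tau + m$ when $m\notin\bm$, one gets $\deg_z\rho_{mm} = 2\deg_z\pi_m + 2\al + 1 - 2\deg_z\tau + 1 = 2m + 2\al$ after accounting for the integration. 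Starting from $\tau_0 = 1$ (so $\deg = 0$) and iterating $n$ times with the tuple $\bm = (m_1,\dots,m_n)$ yields $\deg_z\taual_{\bm} = \sum_{k=1}^n(2m_k + 2\al) = 2(m_1 + \cdots + m_n + \al n)$, which is \eqref{eq:degtau}. One must check that the leading coefficient does not vanish, i.e. that the top-degree terms in the telescoping product \eqref{eq:taudef} do not cancel; this follows because $\rho_{mm}$ is strictly increasing and its leading coefficient is therefore positive.

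For \eqref{eq:degPn}, I would again argue inductively using \eqref{eq:pimi}: $\pi_{m;i} = (1 + t\rho_{mm})\pi_i - t\rho_{im}\pi_m$. When $i\notin\{m_1,\dots,m_n\}$ the dominant term is $(1+t\rho_{mm})\pi_i$ (one checks the degree of $\rho_{im}\pi_m$ is strictly smaller, using orthogonality-driven degree counts), giving $\deg_z\pi_{m;i} = \deg_z\tau_m + i$ at each step, hence finally $\deg_z\Cal_{\bm;i} = \deg_z\taual_{\bm} + i = 2(m_1+\cdots+m_n+\al n) + i$. When $i = m_k$ for some $k$, the ``state-deleting'' character of the CDT at that eigenvalue lowers the degree: the two terms in \eqref{eq:pimi} have equal top degree and their leading parts cancel, which accounts precisely for the correction $-2\delta_{i,\bm}(i+\al)$ — concretely, $\Cal_{\bm;m_k}$ is the image of $\Cal_{m_k}$ under the relevant intertwiner but with the $m_k$-component deleted from the tuple, which is exactly the content of \eqref{eq:Pmi}. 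So I would prove \eqref{eq:Pmi} first (it is essentially a statement that the formula \eqref{eq:Cmidef} is consistent when one index of the augmented tuple $(\bm,i)$ coincides with an entry of $\bm$: by the Remark following the Definition, $\Cal_{\bm;i}$ is symmetric in $\bm$ and independent of $t_i$, so one may place $i = m_k$ in the last slot of $(\bm, m_k)$, at which point the matrix $\cRal_{(\bm,m_k)}$ has two equal indices and the determinant/minor structure collapses to that of the $(n{-}1)$-tuple obtained by deleting $m_k$), and then \eqref{eq:degPn} for $i = m_k$ follows by substituting \eqref{eq:Pmi} into the already-established $i\notin\bm$ case with the smaller tuple $(m_1,\dots,\widehat{m_k},\dots,m_n)$ and the free index $m_k$.

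The main obstacle I anticipate is the degree bookkeeping near the cancellation: establishing rigorously that in \eqref{eq:pimi} (resp.\ in the telescoping determinant \eqref{eq:taudef}) the top-degree terms cancel \emph{exactly} when $i\in\bm$ and do \emph{not} cancel otherwise. This requires a clean lemma identifying the leading coefficients of $\rhoal_{ij}$ and of $\Cal_{m_k}$, and tracking how the monic normalization propagates through the matrix inverse in \eqref{eq:Qmdef}; the cleanest route is probably to couple \eqref{eq:Pmi} with an induction on $n$, since once \eqref{eq:Pmi} is available the $i\in\bm$ case is reduced to a case with strictly fewer parameters where the degree is already known. Everything else is a routine induction on the length of the CDT chain using the recursive identities \eqref{eq:taum}, \eqref{eq:pimi}, \eqref{eq:rho2step} and the equivalence of the matrix and recursive definitions established earlier in this section.
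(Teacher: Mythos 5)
Your overall strategy coincides with the paper's: prove \eqref{eq:Pmi} first (which reduces \eqref{eq:degPn} to the case $i\notin\{m_1,\dots,m_n\}$), then run an induction along the CDT chain using the recursive identities \eqref{eq:taum}, \eqref{eq:pimi}, \eqref{eq:rho2step} together with the equivalence between the recursive and matrix definitions. The paper obtains \eqref{eq:Pmi} even more cheaply than your determinant-collapse argument: setting $i=m_j$ in the recursion \eqref{eq:Prec} gives $\tpi_{j;m_j}=(1+t\trho_{j-1;m_jm_j})\tpi_{j-1;m_j}-t\trho_{j-1;m_jm_j}\tpi_{j-1;m_j}=\tpi_{j-1;m_j}$ identically, so the step at eigenvalue $\lambda_{m_j}$ simply leaves that polynomial untouched.

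There is, however, a genuine error in your degree argument for $i\notin\bm$. You claim that in $\pi_{m;i}=(1+t\rho_{mm})\pi_i-t\rho_{im}\pi_m$ the second term has strictly smaller degree. It does not: writing $d=\deg\tau$, one has $\deg(\rho_{mm}\pi_i)=d+i+2m+2\al$ and $\deg(\rho_{im}\pi_m)=(i+m+2\al)+(d+m)$, which are equal. Orthogonality gives $\rho_{im}(1)=0$ but says nothing about the degree of the antiderivative $\rho_{im}$. The correct argument --- and the one the paper uses --- is that the two terms have the same degree but \emph{different leading coefficients}: if $a,b,c$ denote the leading coefficients of $\pi_i,\pi_m,\tau$, then the leading coefficients of $\rho_{mm}\pi_i$ and of $\rho_{im}\pi_m$ are both equal to $ab^2(-1)^{\al-\frac12}c^{-2}$ times, respectively, $1/(2m+2\al)$ and $1/(i+m+2\al)$, the denominators arising from integrating $z^{2m+2\al-1}$ versus $z^{i+m+2\al-1}$; these differ precisely when $i\ne m$, so the difference cannot cancel. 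You do flag this cancellation analysis as your ``main anticipated obstacle,'' but the resolution sketched in the body of your argument is the wrong one, so as written the proof of \eqref{eq:degPn} is incomplete at exactly the delicate point. A smaller slip: a polynomial increasing on $[-1,1]$ need not have positive leading coefficient (for $\al=\tfrac32$ the leading coefficient of $\rhoal_{mm}$ is in fact negative, since $(1-z^2)^{\al-\frac12}$ has leading coefficient $(-1)^{\al-\frac12}$); fortunately only nonvanishing is needed for \eqref{eq:degtau}, because degrees of products of polynomials always add.
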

\begin{remark}
  From Proposition ~\ref{prop:degPtau}, we see that the codimension
  (number of missing degrees) of the exceptional Gegenbauer family
  indexed by $\bm=(m_1,\dots,m_n)$ is $2(m_1+\cdots+m_n+\alpha n)$. As
  it happens for all
  exceptional polynomials \cite{GFGM19}, this coincides with the degree of $\taual_\bm$.
\end{remark}

\begin{remark}
  Below we will show that the $n$-parameter family
  $\big\{\Cal_{\bm;i}(z;t_\bm)\big\}_{i\in\Nz}$ is the result of applying $n$ confluent Darboux transformations to the classical family
  $\big\{\Cal_i(z)\big\}_{i\in\Nz}$.  Identity \eqref{eq:norm} then tells us that a single
  CDT  leaves invariant all but one of the norming
  constants.   By contrast, identity \eqref{eq:Pmi} tells us that after a
  single CDT there is exactly one polynomial that remains the same but
  whose norm undergoes a change.  It is this phenomenon that accounts
  for the Kronecker delta term in \eqref{eq:degPn}.
\end{remark}

\begin{remark}
  The degree formulas \eqref{eq:degtau}, \eqref{eq:degPn} allow us to
  relate the eigenvalue formula \eqref{eq:lambdaform} with the value
  $\lambda_i = -i(2\alpha+i)$ given in Theorem ~\ref{thm:Teigen}.  Indeed if
  $i\notin \{ m_1,\ldots, m_n\}$ then
  $\deg \Cal_{\bm,i} - \deg\taual_\bm = i$ in full agreement with
  \eqref{eq:lambdaform}.  By contrast, if $i=m_k$ for some $k\in \{
  1,\ldots, n\}$ then
  \[ d_i := \deg \Cal_{\bm,i} - \deg\taual_\bm =  -i - 2\al\]
 and therefore
  \[ d_i(2\al+d_i) = (i + 2\al) i.\]
  Hence, \eqref{eq:lambdaform} is correct in this case also.
\end{remark}
 
In Theorem ~\ref{thm:Portho} and Proposition ~\ref{prop:deglamdai} we have considered the case when $\bm=(m_1,\dots,m_n)$ contains
distinct indices. Let us now show that this choice entails no loss of
generality.  Indeed, the repeated application of a  confluent
Darboux transformation at the same eigenvalue only serves to modify
the deformation parameter.

\begin{prop}
    \label{prop:duplicates}
    Let $\alpha\in\Nz+\frac 12$, $\bm\in\Nz^n$ and let $\taual_\bm,\Cal_{\bm;i}$ be as defined
    in \eqref{eq:taudef} and \eqref{eq:Cmidef}. Then, for any
    $j \in \Nz$, we have
    \begin{align*}
      &\taual_{(\bm,j,j)}(z;\tor( \bt_\bm,t_j,t_j'\tor)) =
        \taual_{(\bm,j)}(z;\tor( \bt_\bm,t_j+t'_j\tor)),\\
      &\Cal_{(\bm,j,j);i}(z;( \bt_\bm, t_j,t_j')) =
        \Cal_{(\bm,j);i}(z;(\tor( \bt_\bm, t_j+t'_j)). 
    \end{align*}
\end{prop}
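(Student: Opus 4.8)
The plan is to prove Proposition~\ref{prop:duplicates} by a direct manipulation of the matrix $\cRal_{(\bm,j,j)}$ and its determinant, reducing the $(n+2)\times(n+2)$ (resp.\ $(n+1)\times(n+1)$) matrix to the smaller one by an elementary row/column operation that exploits the repeated index $j$. The key observation is that when the last two indices of the tuple coincide, the matrix $\cRal_{(\bm,j,j)}$ has two rows that are nearly identical: for $k,\ell$ in the first $n$ slots the $(k,\ell)$ entry is unchanged, while rows $n+1$ and $n+2$ both carry the polynomial $\rhoal_{j\,m_\ell}$ off the diagonal and $\rhoal_{jj}$ in the last two columns, differing only in the parameters $t_j$ versus $t_j'$ and in the placement of the Kronecker $\delta$. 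Concretely, subtracting row $n+1$ from row $n+2$ (after scaling appropriately) annihilates all the $\rhoal$-dependent entries in that row, leaving a row of the form $(0,\dots,0,-1,1)$ up to a factor; expanding the determinant along this row then collapses the $t_j$ and $t_j'$ contributions into a single column carrying $t_j+t_j'$, which is exactly the claim $\taual_{(\bm,j,j)}(z;(\bt_\bm,t_j,t_j'))=\taual_{(\bm,j)}(z;(\bt_\bm,t_j+t_j'))$.

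First I would write out $\cRal_{(\bm,j,j)}$ explicitly block by block, separating the $n\times n$ block indexed by $\bm$ from the two extra rows and columns indexed by the repeated $j$. Then I would perform the row operation $R_{n+2}\mapsto R_{n+2}-\tfrac{t_{j}'}{t_j}R_{n+1}$ (or, to avoid dividing, first rescale rows by $t_j$ and $t_j'$, tracking the determinant factors) so that the new row $n+2$ has entries $\delta_{n+2,\ell}$-type terms only — all the integrals $\rhoal$ cancel because they appear identically in rows $n+1$ and $n+2$. A cofactor expansion along this sparse row, combined with a matching column operation $C_{n+1}\mapsto C_{n+1}+C_{n+2}$, then identifies the reduced determinant with $\cRal_{(\bm,j)}$ evaluated at $t_j+t_j'$; keeping track of the bookkeeping factors shows they cancel exactly. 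This handles the $\taual$ identity.

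For the polynomial identity $\Cal_{(\bm,j,j);i}=\Cal_{(\bm,j);i}$, I would apply the same reduction to the matrix $\cRal_{((\bm,j,j),i)}$, which is $(n+3)\times(n+3)$. By definition \eqref{eq:Cmidef}, $\Cal_{(\bm,j,j);i}$ is the last entry of $\bQal{((\bm,j,j),i)}$, i.e.\ a ratio (up to the determinant factor) of a cofactor of $\cRal_{((\bm,j,j),i)}$ against $\Cal_{m_1},\dots,\Cal_{m_n},\Cal_j,\Cal_j,\Cal_i$. The two rows corresponding to the repeated $j$ again differ only through $t_j$ versus $t_j'$, and the same row-subtraction argument collapses them; since the seed column $(\Cal_{m_1},\dots,\Cal_j,\Cal_j,\Cal_i)^T$ has equal entries in the two $j$-slots, the reduction is compatible with the matrix-vector product defining $\bQal{}$. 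One then checks that the $(n+2)$-th component of the reduced tuple equals $\big[\bQal{((\bm,j),i)}\big]_{n+1}=\Cal_{(\bm,j);i}$ at the parameter value $t_j+t_j'$. Invariance under permutation of the indices (noted in the remark after the definition) lets me assume the repeated pair sits in the last two slots without loss of generality.

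The main obstacle I anticipate is the bookkeeping: getting the scaling factors and cofactor signs to cancel cleanly between numerator and denominator in the definition \eqref{eq:Qmdef} of $\bQal{}$, and making sure the row/column operations used for the $\taual$-reduction and for the $\bQal{}$-reduction are the \emph{same} operations so that the ratio is preserved. A cleaner route, which I would try first, is to avoid determinants entirely and argue at the level of the CDT chain itself: by Proposition~\ref{prop:CDT} and the remark following Proposition~\ref{prop:norms}, performing a CDT at eigenvalue $\lambda_j$ with parameter $t_j$ and then a second CDT at the \emph{same} eigenvalue $\lambda_j$ with parameter $t_j'$ amounts, via \eqref{eq:numi2}, to the single replacement $\nu_{j}^{-1}\mapsto \nu_j^{-1}+t_j+t_j'$ of the $j$-th norming constant — i.e.\ a single CDT with parameter $t_j+t_j'$. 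Since (as will be established in Section~\ref{sec:xgegen}) the matrix formulas for $\taual_\bm$ and $\Cal_{\bm;i}$ are equivalent to the recursive CDT construction, and since a CDT is uniquely determined by the seed eigenfunction and the resulting norming constant, the two constructions must coincide, giving both identities at once.
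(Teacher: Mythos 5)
Your primary route --- elementary row/column operations on $\cRal_{(\bm,j,j)}$ and on the bordered determinant defining $\bQal{((\bm,j,j),i)}$ --- is a genuinely different argument from the paper's. The paper instead invokes Proposition~\ref{prop:taurec} to pass to the recursive definitions \eqref{eq:taurec}--\eqref{eq:Rndef} and then checks by a short computation that two recursion steps at the same index $j$ with parameters $t_j$ and $t_j'$ compose into a single step with parameter $t_j+t_j'$; the key identity is $\rhoal_{(\bm,j);jj}=\rhoal_{\bm;jj}/(1+t_j\rhoal_{\bm;jj})$, which follows from \eqref{eq:Rndef}. Your determinant route is viable and arguably more self-contained, since it works directly with the matrix definitions \eqref{eq:cRalkl}--\eqref{eq:Cmidef} and needs no appeal to the recursion/matrix equivalence; the paper's route buys a two-line algebraic verification at the cost of relying on Proposition~\ref{prop:taurec}.

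Two caveats. First, in \eqref{eq:cRalkl} the parameter $t_{m_\ell}$ is attached to the \emph{column} index, so the two rows of $\cRal_{(\bm,j,j)}$ labelled by the repeated $j$ are literally identical except for the placement of the Kronecker delta; the plain operation $R_{n+2}\mapsto R_{n+2}-R_{n+1}$ already yields the row $(0,\dots,0,-1,1)$, whereas the rescaling by $t_j'/t_j$ that you propose would spoil the cancellation (and is undefined at $t_j=0$). After the companion operation $C_{n+1}\mapsto C_{n+1}+C_{n+2}$ the surviving minor is exactly $\cRal_{(\bm,j)}$ at parameter $t_j+t_j'$, and the same pair of operations applied to the determinant whose last column is $(\Cal_{m_1},\dots,\Cal_{m_n},\Cal_j,\Cal_j,\Cal_i)^T$ (equal entries in the two $j$-slots) gives the second identity, so the plan does go through once this is fixed. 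Second, your fallback ``cleaner route'' is close in spirit to what the paper actually does, but as stated it rests on the claim that a CDT is uniquely determined by its seed eigenfunction and the resulting norming constant; no such uniqueness statement is proved anywhere in the paper, so that step should be replaced by the explicit two-step computation with \eqref{eq:taurec}, \eqref{eq:Prec} and \eqref{eq:Rndef}, which is precisely the paper's proof.
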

\noindent

\subsection{Proof of theorems}
\label{sec:proofs}

In this section we provide proofs for all of the Theorems and Propositions in this Section. 
\begin{enumerate}
\item[1.]  We define polynomials $\ttau_j$, $\tpi_{0;i}$  and rational
functions $\trho_{j;i_1i_2}$ recursively, starting the recursion at the objects
corresponding to the classical Gegenbauer Sturm--Liouville problem.
\item[2.] We show that these recursion formulas describe a multi-step confluent Darboux transformation.
\item[3.] We show in Proposition~\ref{prop:taurec} that
the recursively defined objects coincide with those obtained via the matrix-based
definitions \eqref{eq:Rijdef}-\eqref{eq:Cmidef}. 
\item[4.] Since the objects defined by the matrix formulas are polynomial by construction, we can dispense with the rationality and polynomiality assumptions made at
the beginning of Section ~\ref{subsec:CDTGeg}.
\item[5.]  Propositions \ref{prop:CDT}--\ref{prop:recortho} then ensure that at each step of
the recursion we have an exceptional Gegenbauer Sturm--Liouville
problem, provided the parameters are chosen in the right range.
\end{enumerate}

Fix $\bm=\bm_n=(m_1, \dots, m_n) \in \Nz^n$ and a positive
half-integer $\al\in \Nz+\frac12$.  For $j=0,1,\ldots, n$ let
$\bm_{j}\in \Nz^n$ denote the initial segment of $\bm$; i.e.,
$\bm_j=(m_1, \dots, m_j)$. Note that, throughout this section, we are
going to omit the explicit dependence on $z$ and $\bt_\bm$ of the
objects, which must be understood from the dependence on $\bm$, i.e.,
we will write $\cRal_\bm$ instead of $\cRal_\bm(z;\bt_\bm)$.  In order
to simplify the notation, we will sometimes make the dependence of the
various objects on $\alpha$ implicit rather than explicit.

We start the recursion at $j=0$ by setting
\begin{align*}
  \trho_{0;i_1 i_2} := \rhoal_{i_1 i_2} ,\quad
  \ttau_{0} := 1,\quad
  \tpi_{0;i} := \Cal_i,
\end{align*}
where $\rhoal_{i_1 i_2}(z)$ is given by \eqref{eq:Rijdef} and
$\Cal_i(z)$ are the classical Gegenbauer polynomials \eqref{eq:gegpoly}.  For
$j=1, \dots, n$ we then define
\begin{align}
  \label{eq:taurec}
  \ttau_j
  &= \lp 1+t_{m_j} \trho_{j-1;m_j m_j}\rp \ttau_{j-1},\\
  \label{eq:Prec}
  \tpi_{j;i}
  &= \lp 1+t_{m_j} \trho_{j-1;m_jm_j} \rp \tpi_{j-1;i}
    - t_{m_j}\trho_{j-1;im_j}\tpi_{j-1;m_j},\quad i\in \Nz;\\
    \label{eq:Rndef}
  \trho_{j;i_1i_2}
  &= \trho_{j-1;i_1i_2} - \frac{t_{m_{j}} \trho_{j-1;i_1m_{j}}
    \trho_{j-1;i_2m_j}}{1+ t_{m_j} \trho_{j-1;m_j m_j}},\quad i_1,i_2\in \Nz.
\end{align}
These recursive definitions match the formulas \eqref{eq:taum},
\eqref{eq:pimi} and \eqref{eq:rho2step}.  Thus, in effect we are
defining the objects associated with an $n$-step confluent Darboux
transformation applied to classical Gegenbauer operators.

\begin{prop}
Let $i_1, i_2, j\in\Nz$ and $\trho_{j;i_1i_2 }$, $\tpi_{j,i}$ be as in \eqref{eq:Rndef}, \eqref{eq:Prec}.
  We have that
  \begin{align}
    \label{eq:Rint}
    \trho_{j;i_1i_2}(z)
    & := \int_{-1}^z \tpi_{j,i_1}(u)
      \tpi_{j,i_2}(u)\tW_j(u) du,\\ \intertext{where} \nonumber
    \tW_j(z)
    & := (1-z^2)^{\al-\frac12} \ttau_j(z)^{-2},
  \end{align}
  where the integral denotes an anti-derivative that vanishes at
  $z=-1$
\end{prop}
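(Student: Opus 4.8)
The plan is to prove the claim by induction on $j$, mirroring exactly the structure of Proposition~\ref{prop:rho2step} but now in the recursive setting of the chain. The base case $j=0$ is immediate: by definition $\ttau_0 = 1$, $\tpi_{0;i} = \Cal_i$, and $\tW_0(z) = (1-z^2)^{\al-\frac12} = \Wal(z)$, so the claimed formula for $\trho_{0;i_1i_2}$ reduces precisely to \eqref{eq:Rijdef}, the defining relation for $\rhoal_{i_1i_2}$, which by hypothesis is an anti-derivative vanishing at $z=-1$.

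For the inductive step, I would assume that \eqref{eq:Rint} holds at level $j-1$, i.e. $\trho_{j-1;i_1i_2}' = \tpi_{j-1,i_1}\tpi_{j-1,i_2}\tW_{j-1}$ with $\tW_{j-1} = (1-z^2)^{\al-\frac12}\ttau_{j-1}^{-2}$, and that $\trho_{j-1;i_1i_2}(-1)=0$. The heart of the argument is then to differentiate the right-hand side of \eqref{eq:Rndef}, namely
\[
  \trho_{j;i_1i_2} = \trho_{j-1;i_1i_2} - \frac{t_{m_j}\trho_{j-1;i_1m_j}\trho_{j-1;i_2m_j}}{1+t_{m_j}\trho_{j-1;m_jm_j}},
\]
using the quotient rule and the inductive hypothesis for the derivatives of the three $\trho_{j-1;\cdot\cdot}$ terms. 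This is the same computation carried out in the proof of Proposition~\ref{prop:rho2step}, with $\rho_{ij}$ there playing the role of $\trho_{j-1;\cdot\cdot}$ here, $t$ the role of $t_{m_j}$, $\pi_i$ the role of $\tpi_{j-1,i}$, $\Wal_\tau$ the role of $\tW_{j-1}$, $\pi_{m;i}$ the role of $\tpi_{j;i}$ from \eqref{eq:Prec}, and $\Wal_{\tau_m}$ the role of $\tW_j$. Indeed, since \eqref{eq:taurec} gives $\ttau_j = (1+t_{m_j}\trho_{j-1;m_jm_j})\ttau_{j-1}$, we have exactly $\tW_j = \tW_{j-1}/(1+t_{m_j}\trho_{j-1;m_jm_j})^2$, which is the analogue of \eqref{eq:trhomm}. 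After collecting terms — expanding $(1+t_{m_j}\trho_{j-1;m_jm_j})^2 \tpi_{j-1,i_1}\tpi_{j-1,i_2}$, the cross term $-t_{m_j}(1+t_{m_j}\trho_{j-1;m_jm_j})\tpi_{j-1,m_j}(\tpi_{j-1,i_1}\trho_{j-1;i_2m_j}+\tpi_{j-1,i_2}\trho_{j-1;i_1m_j})$, and $t_{m_j}^2\trho_{j-1;i_1m_j}\trho_{j-1;i_2m_j}\tpi_{j-1,m_j}^2$, all divided by $(1+t_{m_j}\trho_{j-1;m_jm_j})^2$ — the numerator factors as a product, giving
\[
  \trho_{j;i_1i_2}' = \tpi_{j,i_1}\,\tpi_{j,i_2}\,\tW_j,
\]
by the very definition \eqref{eq:Prec} of $\tpi_{j;i}$. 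Finally, evaluating \eqref{eq:Rndef} at $z=-1$ and using $\trho_{j-1;\cdot\cdot}(-1)=0$ from the inductive hypothesis shows $\trho_{j;i_1i_2}(-1) = 0$, so $\trho_{j;i_1i_2}$ is indeed the claimed anti-derivative.

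I do not expect any serious obstacle: the entire computation is a transcription of the one already done in Proposition~\ref{prop:rho2step} into the recursive notation, so the cleanest write-up simply invokes that proposition at each step of the chain rather than repeating the algebra. The one point to state carefully is the rationality (polynomiality) hypothesis — the differentiation argument requires that each $\trho_{j-1;i_1i_2}$ be rational so that the quotient rule manipulation is legitimate and ``integration determines the function from its derivative plus the value at a point'' applies; this holds because $\ttau_j$ is a polynomial (the recursion \eqref{eq:taurec} preserves this under the standing assumptions of Section~\ref{subsec:CDTGeg}), so $\tW_j$ is rational and $\trho_{j;i_1i_2}$, being an anti-derivative of a rational function with no residues, is rational as well. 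I would phrase the proof so that this propagation of rationality is explicit in the induction.
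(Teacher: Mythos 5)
Your proof is correct and follows essentially the same route as the paper: the paper's own (one-line) argument is precisely to invoke \eqref{eq:rhomij} and Proposition~\ref{prop:rho2step} inductively along the chain, with the recursion \eqref{eq:taurec}--\eqref{eq:Rndef} matching the single-step formulas \eqref{eq:taum}, \eqref{eq:pimi}, \eqref{eq:rho2step}. Your write-up simply makes the induction, the base case, and the propagation of rationality explicit, which is a faithful elaboration rather than a different method.
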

\begin{proof}
  The proof follows directly from  \eqref{eq:rhomij} and  Proposition \ref{prop:rho2step}.
\end{proof}

\begin{prop}\label{prop:taurec}
Let $\taual_{\bm}$ and $\Cal_{\bm; i} $ be as in \eqref{eq:taudef}, \eqref{eq:Cmidef} and let $\tilde \tau_n$, $\tpi_{n;i}$ be as in \eqref{eq:taurec}, \eqref{eq:Prec}
  Then, 
  \begin{align}
    \label{eq:ttautau}
    \taual_{\bm} &= \ttau_n,\\
    \label{eq:tpp}
    \Cal_{\bm; i} &= \tpi_{n;i},\quad i\in \Nz.
  \end{align}
 
\end{prop}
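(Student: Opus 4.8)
The plan is to prove both identities \eqref{eq:ttautau} and \eqref{eq:tpp} simultaneously by induction on $n$, the length of the tuple $\bm$. The base case $n=0$ is immediate: $\taual_{()}=\det(\text{empty matrix})=1=\ttau_0$, and for a $1$-tuple $(i)$ one checks directly from \eqref{eq:Qmdef} that $\bQal{(i)}=\Cal_i$, so $\Cal_{();i}=\Cal_i=\tpi_{0;i}$. For the inductive step I would assume the identities hold for the tuple $\bm_{n-1}=(m_1,\dots,m_{n-1})$ — not only for $\taual_{\bm_{n-1}}$ and $\Cal_{\bm_{n-1};i}$, but crucially also for the auxiliary rational functions: I would first prove the companion identity
\begin{equation*}
  \trho_{n-1;i_1 i_2} = \rho_{m_{n-1};\dots;i_1 i_2}
\end{equation*}
matching the recursively defined $\trho_{j;i_1 i_2}$ with the $\rho$-functions \eqref{eq:rhomij} built from $\cRal_{\bm_{n-1}}$ via \eqref{eq:Rint} and the defining integral. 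This follows from Proposition~\ref{prop:rho2step} (the recursion \eqref{eq:Rndef} is exactly \eqref{eq:rho2step}) together with the previous proposition in the excerpt identifying $\trho_j$ with the integral of $\tpi_{j,i_1}\tpi_{j,i_2}\tW_j$.

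The heart of the argument is then a determinantal/linear-algebra computation. Writing $\cRal_{\bm}=\cRal_{(\bm_{n-1},m_n)}$ as an $n\times n$ matrix, I would expand $\taual_{\bm}=\det\cRal_{\bm}$ and $\bQal{\bm}$ along the last row/column, relating them to the $(n-1)\times(n-1)$ data. The key observation is that $[\cRal_{\bm}]_{k\ell}=\delta_{k\ell}+t_{m_\ell}\rhoal_{m_k m_\ell}$, and that the entries $\rhoal_{m_k m_\ell}$ for $k,\ell\le n-1$ are precisely the $j=0$ input to the recursion; one must show that performing the recursion step $j\mapsto j+1$ on the determinant corresponds to a rank-one update (Schur-complement) of the matrix. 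Concretely, I expect
\begin{equation*}
  \taual_{(\bm_{n-1},m_n)} = \bigl(1 + t_{m_n}\,\trho_{n-1;m_n m_n}\bigr)\,\taual_{\bm_{n-1}},
\end{equation*}
which is \eqref{eq:taurec}, proved by applying the matrix determinant lemma (or block-determinant / Schur complement formula) to $\cRal_{\bm}$ and recognizing the Schur complement of the top-left $(n-1)\times(n-1)$ block as $1+t_{m_n}\trho_{n-1;m_n m_n}$, using the inductive identification of $\trho_{n-1}$ with the $\rho$-functions attached to $\cRal_{\bm_{n-1}}$. The analogous computation for the tuple $\bQal{\bm}$ and for $\Cal_{\bm;i}=[\bQal{(\bm,i)}]_{n+1}$, again by cofactor expansion along the column carrying $t_i$, should reproduce \eqref{eq:Prec} — here one uses that $\taual_{(\bm,i)}[(\cRal_{(\bm,i)})^{-1}]_{n+1,j}$ are cofactors of the last column, combined with the same Schur-complement bookkeeping.

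The main obstacle I anticipate is \emph{purely bookkeeping}: matching the indices of the recursively-defined $\trho_{j;i_1 i_2}$, $\tpi_{j;i}$ with the right minors of the growing matrices $\cRal_{\bm}$ and $\cRal_{(\bm,i)}$, and making sure the Schur-complement / matrix-determinant-lemma identities are applied to the correct blocks and with the correct sign conventions (the cofactor expansion of $\Cal_{\bm;i}$ along the last column requires care with the alternating signs and with the fact, noted in the remark after the definition, that $\Cal_{\bm;i}$ is symmetric in $\bm$ and independent of $t_i$). Once the correspondence $\trho_{n-1}\leftrightarrow \rho$-functions of $\cRal_{\bm_{n-1}}$ is firmly established, both \eqref{eq:ttautau} and \eqref{eq:tpp} fall out of a single application of the determinant update formula, so I would organize the write-up as: (i) state and prove the $\trho$ correspondence by induction, (ii) deduce the determinant update for $\taual_\bm$, (iii) deduce the corresponding update for $\bQal{\bm}$ and hence for $\Cal_{\bm;i}$, closing the induction.
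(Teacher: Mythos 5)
Your plan is correct and coincides in substance with the paper's proof, which is not written out here but deferred to the analogous result for exceptional Legendre polynomials \cite[Proposition 5]{GFGM21}: that argument is exactly an induction on $n$ in which the recursions \eqref{eq:taurec}--\eqref{eq:Rndef} are matched against rank-one (Schur-complement/Sylvester determinant identity) updates of the bordered matrices $\cRal_{\bm}$ and $\cRal_{(\bm,i)}$, carrying along the companion identity expressing $\trho_{j;i_1i_2}$ in terms of $(\cRal_{\bm_j})^{-1}$ just as you propose. I see no gaps in your outline.
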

\begin{proof}
  The proof follows the same argument as the analogous result for exceptional Legendre polynomials, see \cite[Proposition 5]{GFGM21}.
\end{proof}

As a direct consequence of \eqref{eq:ttautau} and \eqref{eq:tpp} we see
that the recursively defined $\ttau_j, \tpi_{j;i}$ are polynomials for
each $j=1,\ldots, n$.  We have also established that the
antiderivative in the RHS of \eqref{eq:Rint} describes a rational
function.  This allows us to dispense with the polynomiality and
rationality assumptions used in Section ~\ref{subsec:CDTGeg}.

\begin{proof}[Proof of Theorem ~\ref{thm:Teigen}]
  Starting form the classical Gegenbauer operator, the application of
  a rational confluent Darboux transformation indexed by an integer
  $m_i$ introduces an extra real parameter $t_{m_i}$. Proposition
  ~\ref{prop:taurec} establishes the equivalence of the objects defined
  by the CDT recursion \eqref{eq:taurec}, \eqref{eq:Prec} and the
  matrix-based definitions \eqref{eq:taudef}, \eqref{eq:Cmidef}.  This
  allows us to apply Proposition ~\ref{prop:eigenpimi}. The eigenvalue
  relation \eqref{eq:Teigen} follows immediately.
\end{proof}

  \begin{proof}[Proof of Theorem~\ref{thm:Portho}]
  First of all, it is clear by construction that the objects $C^{(\alpha)}_{\bm;i}$ are polynomials.
  In virtue of Proposition ~\ref{prop:taurec}, the results of Section ~\ref{sec:Gegen} can be exploited.
    We recursively define for $i\in\Nz$     \[ \tnu_{0,i} = \nual_i,\quad \lp\tnu_{j,i}\rp^{-1} =
      \lp\tnu_{j-1,i}\rp^{-1}+ \delta_{im_j}t_{m_j} ,\quad j=1,\ldots,
      n.\] By Propositions ~\ref{prop:taumpos} and ~\ref{prop:norms}, condition (a) is satisfied, i.e.
    $\ttau_j(z)>0$ for $z\in I$, if and only if
    $\tnu_{j,m_j}>0$.  By the above definition,
    \[ \tnu_{j,m_j} = \frac{\nual_{m_j}}{1+ t_{m_j}\nual_{m_j}}.\]
    Therefore, $\tilde\nu_{j,m_j}>0$ for $ j=1,\ldots, n$ if and only if
    \eqref{eq:tconstraints} holds.  Relation \eqref{eq:norm} also
    follows by Proposition ~\ref{prop:norms} and a similar induction
    argument.  Finally, the completeness condition (c) follows by
    induction with  Proposition ~\ref{prop:recortho} serving as the
    inductive step.    
  \end{proof}

We conclude this section by proving the remaining results on the degrees of the polynomials and the case of repeated indices.
  \begin{proof}[Proof of Proposition ~\ref{prop:degPtau}]
    By \eqref{eq:Prec}, we have
    \[ \tpi_{j;m_j} = \tpi_{j-1,m_j},\quad j=1,\ldots, n.\]
    Identity \eqref{eq:Pmi} then follows by \eqref{eq:tpp}.
    Thanks to \eqref{eq:Pmi} no generality is lost by assuming that
    $i\notin \{ m_1,\ldots, m_n\}$.  We use induction to show that
    \begin{align}
      \label{eq:degttau}
      \deg \ttau_j
      &= 2(m_1+\cdots + m_j+ \al j),\quad  j=0,1,\ldots, n.\\
      \label{eq:degtpi}
      \deg \tpi_{j;i}
      &=
      \deg \ttau_j + i.
    \end{align}
    The desired relations \eqref{eq:degtau}, \eqref{eq:degPn} then
    follow by \eqref{eq:ttautau} and \eqref{eq:tpp}.

    By inspection, \eqref{eq:degttau} and \eqref{eq:degtpi} hold for
    $j=0$.  Assume that these relations hold for a given $j<n$.  By
    \eqref{eq:Rint},
    \[  \trho_{j,m_{j+1} m_{j+1}}(z) =  \int^z_{\tor{-1}}
      \lp\frac{\tpi_{m_{j+1}}(u)}{\ttau_j(u)}\rp^2 \tW_0(u)du .\]
    Since $m_{j+1} \notin \{ m_1, \ldots, m_j\}$, by the inductive hypothesis,
    \[ \deg_z \trho_{j,m_{j+1} m_{j+1}} = 2 m_{j+1} + 2\al,\]
    where the degree of a rational function is
    understood as the
    difference between the degrees of the numerator and
    the denominator.  Hence,
    \[ \deg_z \ttau_{j+1} = 2m_{j+1} + 2\al + \deg_z \ttau_j,\]
    which agrees with \eqref{eq:degttau} for the $j+1$ case.

    By \eqref{eq:Prec}
    and \eqref{eq:Rint},
    we have
    \[ \tpi_{j+1;i} = \tpi_{j;i} + t_{m_{j+1}} \Pi, \]
    where
    \[ \Pi(z) =\tpi_{j;i}(z) \int^z_{-1} \frac{\tpi_{j;m_{j+1}}(u)}{\ttau_j(u)}
      \frac{\tpi_{j;m_{j+1}}(u)}{\ttau_j(u)}\tW_0(u)du
      -\tpi_{j;m_{j+1}}(z) \int^z_{-1} \frac{\tpi_{j;i}(u)}{\ttau_j(u)}
      \frac{\tpi_{j;m_{j+1}}(u)}{\ttau_j(u)} \tW_0(u)du
      .\]
    Since $i\neq m_{j+1}$, the leading degree terms in the above difference do
    not cancel and hence
    \[ \deg_z \tpi_{j+1;i} = \deg_z \Pi = 2m_{j+1}+2\al+\deg
      \tpi_{j;i}.\]
    Hence \eqref{eq:degtpi} also holds for $j+1$.
  \end{proof}

\begin{proof}[Proof of Proposition ~\ref{prop:duplicates}]
  We apply Proposition ~\ref{prop:taurec} and the definitions
  \eqref{eq:taurec}, \eqref{eq:Prec} twice to obtain
    \begin{align*}
      \taual_{(\bm,j,j)}
      &= \lp 1+t'_j\rhoal_{(\bm,j);jj} \rp \taual_{(\bm,j)} \\
      & = \lp 1+t'_j\rhoal_{\bm;jj}-\frac{(t'_j)^2 \lp \rhoal_{\bm;jj}
        \rp^2}{1+t'_j \rhoal_{\bm; jj}} \rp \lp 1+t_j \rhoal_{\bm; jj} \rp
        \taual_\bm\\ 
      &= \lp 1+(t_j+t_j')\rhoal_{\bm;jj} \rp \taual_\bm\\
      &= \taual_{(\bm,j)}(z;\tor( \bt_\bm,t_j+t_j')),\\
      \Cal_{(\bm,j,j);i}
      &= \lp 1+t'_j\rhoal_{(\bm,j);jj} \rp
        \Cal_{(\bm,j); i} -t'_j
        \rhoal_{(\bm,j);ij}\Cal_{(\bm,j);j}\\ 
      &= \lp 1+t'_j\rhoal_{(\bm,j);jj} \rp \lp \lp 1+t_j\rhoal_{\bm;jj}
        \rp \Cal_{\bm;i}-t_j \rhoal_{\bm;ij}\Cal_{\bm;i} \rp \\ 
      &\quad -t'_j \left(\rhoal_{\bm; ij}-\frac{t_j \rhoal_{\bm; jj}\rhoal_{\bm; ij}}{1+t_j\rhoal_{\bm; jj}}\right) \Cal_{\bm;j}\\
      &= \lp 1+(t_j+t_j')\rhoal_{\bm;jj} \rp
        \Cal_{\bm;i}-(t_j+t_j')\rhoal_{\bm;ij} \Cal_{\bm;j}\\ 
      &=\Cal_{(\bm,j);i}(z;(\bt_\bm, t_j+t_j')).
  \end{align*}
\end{proof}

\section{Examples}
\label{sec:examples}

We conclude by showing some explicit examples of exceptional Gegenbauer
polynomials of the second kind, together with their properties. It can be readily checked that these families are an isospectral deformation of the classical Gegenbauer polynomials.

\subsection{1-parameter exceptional Gegenbauer polynomials of the second kind} \hfill

The 1-parameter exceptional Gegenbauer polynomials arise after a
single CDT on the classical operator. For
$m \in \Nz$,  we follow definitions
\eqref{eq:taum}--\eqref{eq:pimi} to write
\begin{align*}
    &\taual_{m} (z, t_{m}) = 1 + t_{m} \rhoal_{m m}(z) , \\
    &\Cal_{m;i} (z, t_{m}) = \lp 1 + t_{m} \rhoal_{m m}(z) \rp \Cal_{i}(z) - t_{m} \rhoal_{i m}(z) \Cal_{m}(z) ,
\end{align*}
with $\rhoal_{ij}$ as per \eqref{eq:Rijdef} and $\Cal_i$ the classical
Gegenbauer polynomials. Notice that by construction, we have
$\Cal_{m;m} = \Cal_{m}$.  The $\{ \Cal_{m;i} \}_{i \in \Nz}$ is a
family of exceptional Gegenbauer polynomials with weight
\[ \Wal_m(z) = \frac{(1-z^2)^{\al-\frac12}}{ \big(\taual_{m}(z)\big)^{2}},\]
as long as $t_{m}$ satisfies the inequality
\begin{gather*}
  t_{m} > - \lp\nual_{m}\rp^{-1} = - \frac{2^{2\al-1} m! (m+\al)
    \Gamma(\al)^2 }{\pi \Gamma(m+2\al)}.
\end{gather*}
The orthogonality relations are
\begin{align*}
  &\int_{-1}^{1} \Cal_{m;i} (z,t_{m}) \Cal_{m;j} (z,t_{m})
    \Wal_m(z) dz  =\delta_{ij} \nual_i =  \delta_{ij} \frac{\pi 2^{1-2\alpha}
    \Gamma(i+2\alpha)}{i! (i+\alpha) \Gamma(\alpha)^2} ,\quad i, j \in
    \Nz \backslash \{m\} \\ 
  &\int_{-1}^{1} \Cal_{m;m} (z,t_{m})^2 \Wal_m(z,t_m)dz =
    \frac{\nual_m}{1+ t\nual_m} = \frac{\pi
    2^{1-2\alpha} \Gamma(m+2\alpha)}{m! (m+\alpha) \Gamma(\alpha)^2 +
    t_{m} \pi 2^{1-2\alpha} \Gamma(m+2\alpha)} . 
\end{align*}

The function $\taual_{m}$ is shown below in Figure ~\ref{fig:1}, for a particular choice of the parameters $\alpha, m, t_{m}$.
\begin{figure}[ht]
    \centering
    \includegraphics[width=0.5\textwidth]{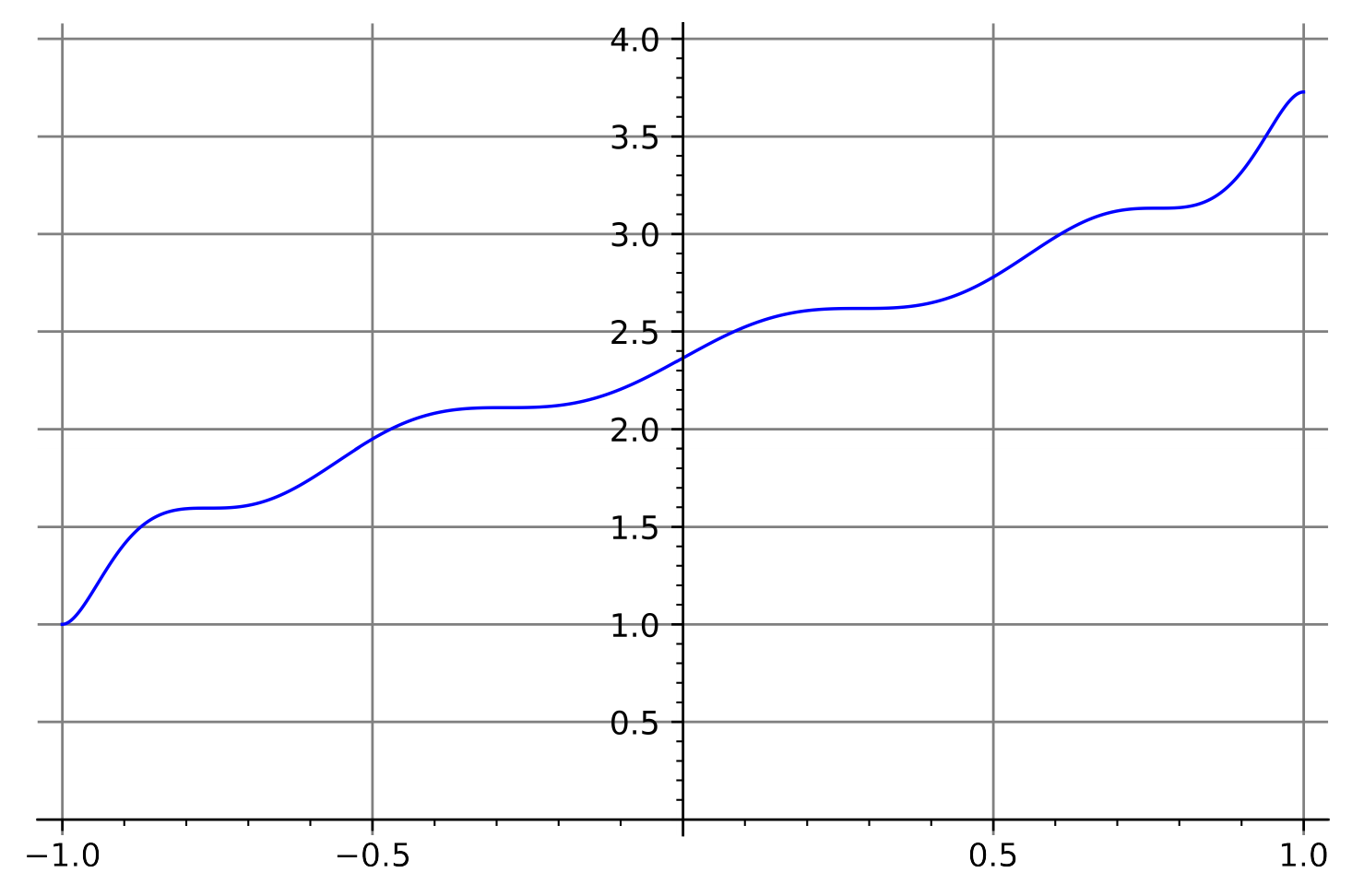}
    \caption{The function $\taual_{m} (z, t_{m})$ for $m=4$ and
      $t_{m}=0.5$.}
    \label{fig:1}
\end{figure}
Clearly, there are no zeroes. Figure ~\ref{fig:2} shows the polynomial families for $\al=3/2$, $m=2$, and different values of $t_{m}$.

\begin{figure}[h]
    \centering
    \begin{tabular}{cc}
    \includegraphics[width=0.45\textwidth]{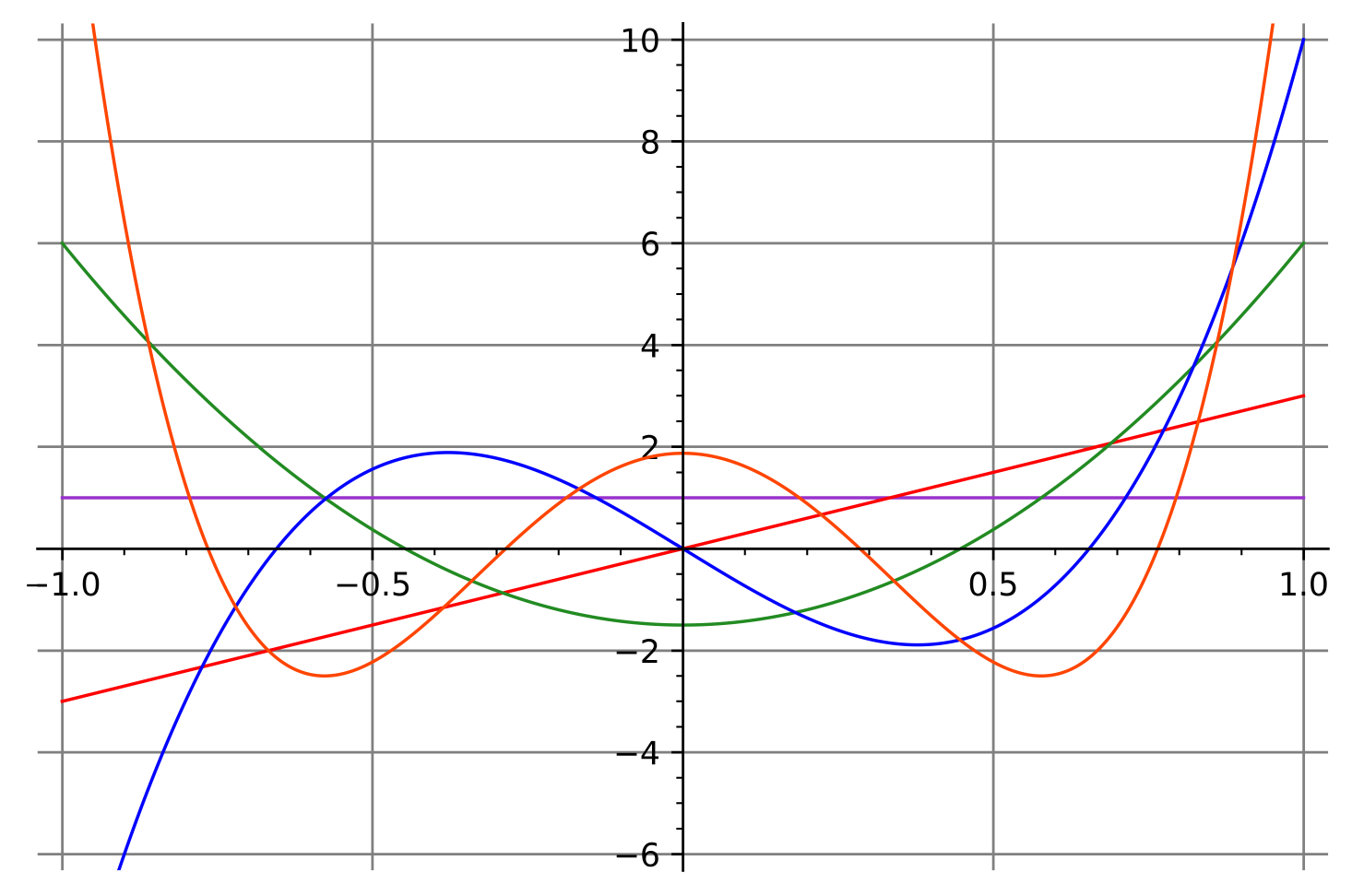} & \includegraphics[width=0.45\textwidth]{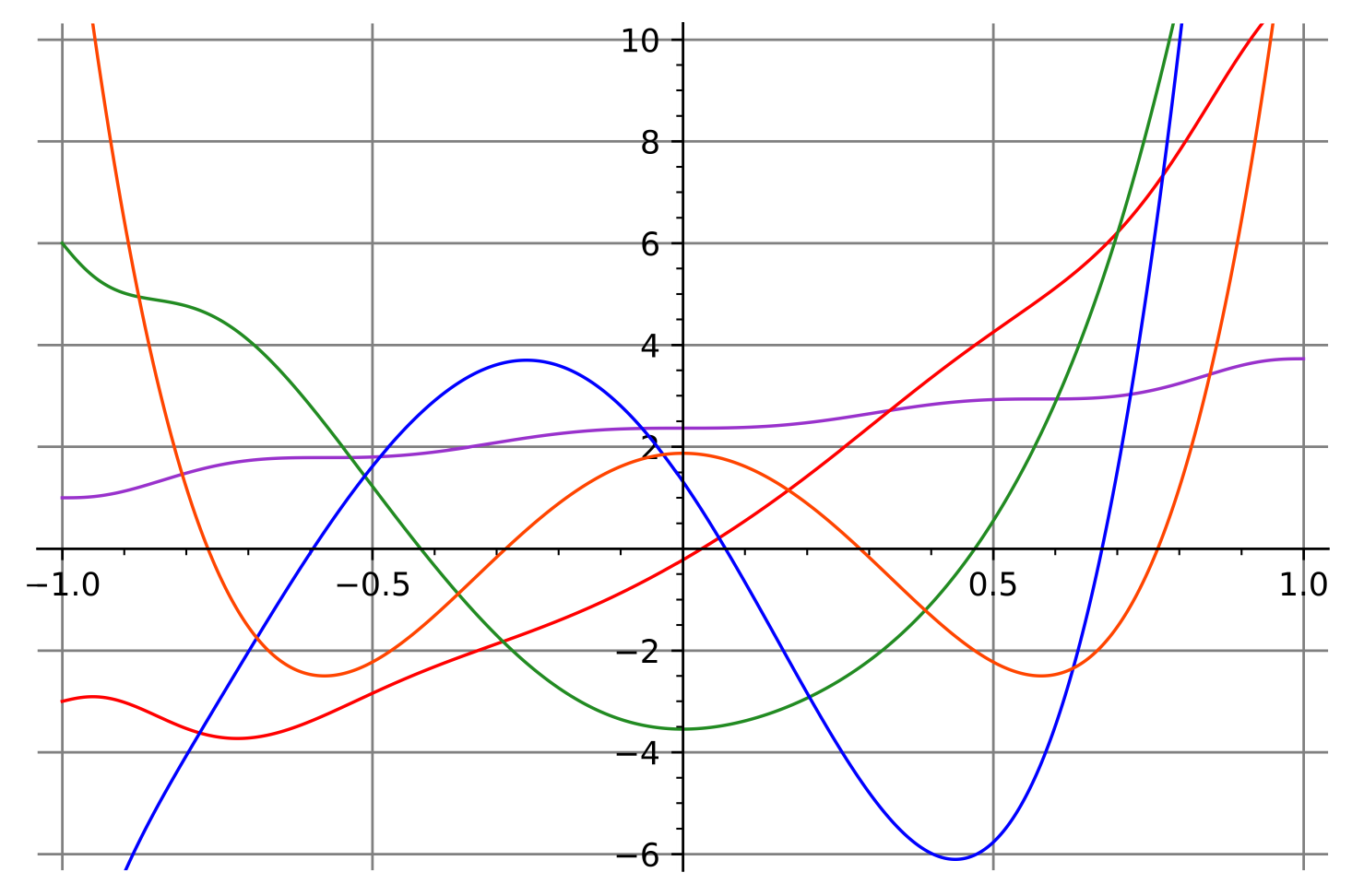}
    \end{tabular}
    \caption{First few exceptional Gegenbauer polynomials $\Cal_{m;i}(z;t_{m})$ 
    for $m=4$, with $t_{m}=0$ (left) and $t_{m}=0.5$ (right).}
    \label{fig:2}
\end{figure}

The first few polynomials for $m=4$ and $\al=3/2$ are explicitly given by
\begin{align*}
    &C^{(3/2)}_{4;0} (z, t_4) = C^{(3/2)}_{0}(z) + \frac{15}{176} t_4 \lp 945 z^{11} - 3080 z^9 + 3630 z^7 - 1848 z^5 + 385 z^3 + 32 \rp , \\
    &C^{(3/2)}_{4;1} (z, t_4) = C^{(3/2)}_{1}(z) + \frac{45}{5632} t_4 \big( 19845 z^{12} - 59290 z^{10} + 59455 z^8 \\
    &\qquad\qquad\qquad\qquad\qquad\qquad\qquad\qquad\qquad - 20636 z^6 + 275 z^4 + 1430 z^2 + 1024 z - 55 \big) , \\
    &C^{(3/2)}_{4;2} (z, t_4) = C^{(3/2)}_{2}(z) + \frac{45}{704} t_4 \big( 3675 z^{13} - 11515 z^{11} + 13310 z^9 - 7590 z^7 \\
    &\qquad\qquad\qquad\qquad\qquad\qquad\qquad\qquad\qquad + 2871 z^5 - 495 z^3 + 320 z^2 - 64 \big) , \\
    &C^{(3/2)}_{4;3} (z, t_4) = C^{(3/2)}_{3}(z) + \frac{75}{2816} t_4 \big( 9261 z^{14} - 30919 z^{12} + 39501 z^{10} - 24783 z^8 \\
    &\qquad\qquad\qquad\qquad\qquad\qquad\qquad\qquad\qquad + 7007 z^6 + 2739 z^4 + 1792 z^3 - 1881 z^2 - 768 z + 99 \big) , \\
    &C^{(3/2)}_{4;4} (z, t_4) = C^{(3/2)}_{4}(z) .
\end{align*}

\noindent
Notice that, as illustrated by Figure ~\ref{fig:2}, the exceptional polynomials are continuous deformations of the corresponding classical polynomials, aside from $C^{(3/2)}_{4;4}$.

\subsection{2-parameter exceptional Gegenbauer polynomials of the second kind} \hfill

We construct the 2-parameter exceptional polynomials by applying the
recursive construction to the 1-parameter formulas. We start with
$\bm = (m_1, m_2) \in \Nz^2$ and the associated tuple
$\bt_{\bm} =(t_{m_1}, t_{m_2})$. Following equations
\eqref{eq:rhoij}--\eqref{eq:pimi}, and using \eqref{eq:rho2step}, we
find that
\begin{align*}
    &\taual_{\bm} (z; \bt_{\bm}) = \taual_{m_1} (z, t_{m_1}) \taual_{m_2} (z, t_{m_2}) - t_{m_1} t_{m_2} \rhoal_{m_1 m_2}(z)^2 , \\
    &\Cal_{\bm;i} (z; \bt_{\bm}) = \Cal_{i}(z) \taual_{\bm} (z; \bt_{\bm}) - t_{m_1} \Cal_{m_1}(z) \taual_{m_2}(z; t_{m_2}) \rhoal_{m_2; m_1 i} (z, t_{m_2}) \\
    &\qquad\qquad\qquad - t_{m_2} \Cal_{m_2}(z) \taual_{m_1}(z; t_{m_2}) \rhoal_{m_1; m_2 i} (z, t_{m_1}) ,
\end{align*}
where
\begin{align*}
  \rhoal_{m; i j}(z, t_{m})
  &= \int_{-1}^z \Cal_{m; i} (u, t_{m})    \Cal_{m; j} (u, t_{m}) \Wal_m(u) du\\   
  &= \rhoal_{ij}(z) - \frac{t_{m} \rhoal_{m i}(z) \rhoal_{m j}(z)}{1 +
    t_{m} \rhoal_{m m}(z)}, \qquad m,i, j \in \Nz. 
\end{align*}
Once again, the polynomials form a complete orthogonal basis relative
to the weight
\[ \Wal_\bm(z) = (1-z^2)^{\al-\frac12} \taual_{m_1 m_2}(z;t_{m_1},
  t_{m_2})^{-2},\] provided
\[
  t_{m_i} > - \frac{2^{2\al-1} m_i! (m_i+\al) \Gamma(\al)^2 }{\pi
      \Gamma(m_i+2\al)},\quad i=1,2.
\]
The orthogonality relations are
\begin{align*}
  &\int_{-1}^{1} \Cal_{\bm;i} (z, \bt_{\bm}) \Cal_{\bm;j} (z,
    \bt_{\bm}) \Wal_\bm(z,\bt_\bm) dz = \delta_{ij} \frac{\pi
    2^{1-2\alpha} \Gamma(i+2\alpha)}{i! (i+\alpha) \Gamma(\alpha)^2}
    ,\quad i, j \in \Nz \backslash \{m_1,m_2\}, \\ 
  &\int_{-1}^{1} \Cal_{\bm;m_i} (z, \bt_{\bm})^2
    \Wal_\bm(z,\bt_\bm) dz =    \frac{\pi 2^{1-2\alpha}
    \Gamma(m_i+2\alpha)}{m_i! (m_i+\alpha) 
    \Gamma(\alpha)^2 + t_{m_i} \pi 2^{1-2\alpha} \Gamma(m_i+2\alpha)}
    , \quad i=1,2.
\end{align*}

\bigskip
\subsection*{Acknowledgements}
Mar\'ia­ \'Angeles Garc\'ia-Ferrero was supported  by the Spanish MINECO through Juan de la Cierva fellowship FJC2019-039681-I, by the Spanish State Research Agency through BCAM Severo Ochoa excellence accreditation SEV-2017-0718 and by the Basque Government through the  BERC Programme 2018-2021. The research of David G\'omez-Ullate has been financed by projects PGC2018-096504-B-C33 and RTI2018-100754-B-I00 from  FEDER/Ministerio de Ciencia e Innovación– Agencia Estatal de Investigación,  the European Union under the 2014-2020 ERDF Operational Programme and the Department of Economy, Knowledge, Business and University of the Regional Government of Andalusia (project FEDER-UCA18-108393).



\begin{thebibliography}{99}

\bibitem{AM80} P.B. Abraham and H. E. Moses, Changes in potentials due to changes in the point spectrum: anharmonic oscillators with exact solutions, \textit{Phys. Rev. A} \textbf{22(4)} (1980) 1333--1340.

\bibitem{AS64} Milton Abramowitz and Irene A. Stegun, \textit{Handbook of Mathematical Functions with Formulas, Graphs, and Mathematical Tables}, Washington: U.S. Govt. Print. Off., Washington, D.C., 1964.

\bibitem{adler} V. E. Adler,  Nonlinear chains and Painlev\'e equations. \textit{Physica D: Nonlinear Phenomena}, \textbf{73(4)} (1994) 335-351.

\bibitem{AAM96} A.G. Akritas, E.K. Akritas, and G.I. Malaschonok. Various proofs of Sylvester's (determinant) identity, \textit{Mathematics and Computers in Simulation} \textbf{42.4-6} (1996) 585-593.

\bibitem{Bo20} N. Bonneux, Exceptional Jacobi polynomials, \textit{J. Approx. Theory} \textbf{239} (2019) 72--112.

\bibitem{CASH17} A. Contreras-Astorga and A. Schulze-Halberg, Recursive representation of Wronskians in confluent supersymmetric quantum mechanics, \textit{J. Phys. A} \textbf{50(10)} (2017) 105301.

\bibitem{cooper} F. Cooper,A. Khare and U. Sukhatme, Supersymmetry and quantum mechanics. \textit{Physics Reports}, \textbf{251(5-6)} (1995) 267-385.

\bibitem{crum} M. M. Crum, Associated Sturm-Liouville systems,  \textit{Quart.
J. Math.} \textbf{6}  (1955) 121.

\bibitem{Du15} A. J. Dur\'{a}n, Higher order recurrence relation for exceptional Charlier, Meixner, Hermite and Laguerre orthogonal polynomials, \textit{Integr.  Transf.  Spec. F.} \textbf{26} (2015) 357--376.

\bibitem{Du17} A. J. Dur\'an, Exceptional Hahn and Jacobi orthogonal polynomials, \textit{J. Approx. Theory} \textbf{214} (2017) 9--48.

\bibitem{Du21} A. J. Dur\'an, Exceptional Hahn and Jacobi polynomials with an arbitrary number of continuous parameters, arXiv preprint \href{https://arxiv.org/abs/2104.01354}{\texttt{2104.01354}} 


\bibitem{GFGM19} M. \'A. Garc\'ia-Ferrero, D. G\'omez-Ullate and R. Milson, A Bochner type characterization theorem for exceptional orthogonal polynomials, \textit{J. Math. Anal. Appl.} \textbf{472} (2019) 584--626.

\bibitem{GFGM21} M. \'A. Garc\'ia-Ferrero, D. G\'omez-Ullate and R. Milson, Exceptional Legendre Polynomials and Confluent Darboux Transformations, \textit{SIGMA} \textbf{17} (2021) 016.

\bibitem{GT96} F. Gesztesy and G. Teschl, On the double commutation
  method, \textit{Proceedings of the Amer. Math. Soc.} \textbf{124(6)}
  (1996) 1831--1840.

\bibitem{GGM14} D. G\'{o}mez-Ullate, Y. Grandati, and R. Milson,
  Rational extensions of the quantum harmonic oscillator and
  exceptional Hermite polynomials, \textit{J. Phys. A: Math. Gen.}
  \textbf{47} (2014) 015203.

\bibitem{GGM21} D. G\'{o}mez-Ullate, Y. Grandati, and R. Milson,
Spectral Theory of Exceptional Hermite Polynomials,
\textit{Proceedings of the Littljohn coference --- TBD} ???

\bibitem{GGM21b} D. G\'{o}mez-Ullate, Y. Grandati, and R. Milson, Complete classification of rational solutions of $A_{2n}$-Painlev\'e systems. \textit{Advances in Mathematics}, \textbf{385} (2021) 107770.

\bibitem{GKM12} D. G\'omez-Ullate, N. Kamran and R. Milson, On orthogonal polynomials spanning a non-standard flag, \textit{Cont. Math.} \textbf{563} (2012) 51--71.

\bibitem{GKM04} D. G\'omez-Ullate, N. Kamran and R. Milson, Supersymmetry and algebraic Darboux transformations, \textit{J. Phys. A} \textbf{37} (2004) 10065--10078.

\bibitem{GKM09} D. G\'omez-Ullate, N. Kamran and R. Milson, An extended class of orthogonal polynomials defined by a Sturm--Liouville problem, \textit{ J. Math. Anal. Appl.} \textbf{359(1)} (2009) 352--367.

\bibitem{GKKM16} D. G\'omez-Ullate, A. Kasman, A. B. J. Kuijlaars and R. Milson, Recurrence Relations for Exceptional Hermite Polynomials, \textit{J. Approx. Theory} \textbf{204} (2016) 1--16.

\bibitem{GMM13} D. G\'{o}mez-Ullate, F. Marcell\'{a}n and R. Milson, Asymptotic and interlacing properties of zeros of exceptional Jacobi and Laguerre polynomials, \textit{J. Math. Anal. Appl. } \textbf{399} (2013) 480--495.

\bibitem{GB13} Y. Grandati, and A. B\'erard, Comments on the generalized SUSY QM partnership for Darboux--P\"oschl--Teller potential and exceptional Jacobi polynomials, \textit{J. Eng. Math.} \textbf{82(1)} (2013) 161--171.

\bibitem{GQ15} Y. Grandati, and C. Quesne, Confluent chains of DBT:
  enlarged shape invariance and new orthogonal polynomials, \textit{SIGMA} \textbf{11}
  (2015) 061.

\bibitem{Ho15} \'{A}. P. Horv\'{a}th, The electrostatic properties of zeros of exceptional Laguerre and Jacobi polynomials and stable interpolation, \textit{J. Approx. Theory} \textbf{194} (2015) 87--107.

\bibitem{HV10} A. D. Hemery and A. P. Veselov, Whittaker-Hill equation and semifinite-gap Schr\"odinger operators, \textit{J. Math. Phys.} \textbf{51} (2010) 072108.

\bibitem{KSW89} W.Y. Keung-Yee, U.P Sukhatme and Q. Wang, Families of
  strictly isospectral potentials, \textit{J. Phys. A} \textbf{22(21)}
  (1989) L987.

\bibitem{LL15} Liaw, C., Littlejohn, L and Kelly J.S.: Spectral
  analysis for the exceptional $X_m$-Jacobi equation. Electronic
  Journal of Differential Equations \textbf{194}, 1-10  (2015)

  
\bibitem{LL16} Liaw, C., Littlejohn, L., Kelly J.S. and Milson, R.:
  The spectral analysis of three families of exceptional Laguerre
  polynomials. Journal of Approximation Theory \textbf{202}, 5-41  (2016)
  

\bibitem{KM15} A. B. J. Kuijlaars and R. Milson, Zeros of exceptional Hermite polynomials, \textit{J. Approx. Theory} \textbf{200} (2015) 28--39.

\bibitem{MQ13} I. Marquette and C. Quesne, New families of superintegrable systems from Hermite and Laguerre exceptional orthogonal polynomials, \textit{ J. Math. Phys. } \textbf{54} (2013) 042102.

\bibitem{MT15} H. Miki and S. Tsujimoto, A new recurrence formula for generic exceptional orthogonal polynomials, \textit{J. Math. Phys. } \textbf{56} (2015) 033502.

\bibitem{Od16} S.~Odake, Recurrence Relations of the Multi-Indexed Orthogonal Polynomials: III, \textit{J. Math. Phys.} \textbf{57} (2016) 023514.

\bibitem{OS09} S. Odake and R. Sasaki, Infinitely many shape invariant potentials and new orthogonal polynomials, \textit{Phys. Lett. B }\textbf{679} (2009) 414--417.

\bibitem{PTV12} S.~Post, S.~Tsujimoto, and L.~Vinet, Families of superintegrable Hamiltonians constructed from exceptional polynomials, \textit{J. Phys. A: Math. Gen.}  \textbf{45} (2012) 405202.

\bibitem{SHR14} A.~Schulze-Halberg and B.~Roy, Darboux partners of pseudoscalar Dirac potentials associated with exceptional orthogonal polynomials, \textit{Ann. Phys.} \textbf{349} (2014) 159--170.

\bibitem{Su85} C.V. Sukumar, Supersymmetric quantum mechanics and the inverse scattering method, \textit{J. Phys. A} \textbf{18(15)} (1985) 2937.

\end{thebibliography}
\end{document}